\newtheorem{theorem}{Theorem}[section]
\newtheorem{lemma}[theorem]{Lemma}
\newtheorem{definition}[theorem]{Definition}
\newtheorem{proposition}[theorem]{Proposition}
\newtheorem{remark}[theorem]{Remark}
\newcommand{\gm}{\gamma}
\newcommand{\R}{\mathbb{R}}
\newcommand{\cS}{\mathcal{S}}
\newcommand{\N}{\mathcal{N}}
\newcommand{\I}{\mathcal{I}}
\newcommand{\A}{\mathcal{A}}
\newcommand{\ri}{\rightarrow}
\newcommand{\eps}{\varepsilon}
\newcommand{\Proof}{\begin{proof}}
\newcommand{\End}{\end{proof}}
\newcommand{\EEnd}{\ensuremath{\hfill{\Box}}\\}
\numberwithin{equation}{section}
\begin{document}




\title{Aubry-Mather theory for contact Hamiltonian systems II}

\author{Kaizhi Wang \and Lin Wang \and Jun Yan}
\address{School of Mathematical Sciences, Shanghai Jiao Tong University, Shanghai 200240, China}

\email{kzwang@sjtu.edu.cn}
\address{School of Mathematics and Statistics, Beijing Institute of Technology, Beijing 100081, China}

\email{lwang@bit.edu.cn}
\address{School of Mathematical Sciences, Fudan University, Shanghai 200433, China}

\email{yanjun@fudan.edu.cn}

\subjclass[2010]{37J51; 35F21; 35D40.}
\keywords{Aubry-Mather theory, weak KAM theory, contact Hamiltonian systems, Hamilton-Jacobi equations}

\begin{abstract}
\noindent In this paper, we  continue to develop Aubry-Mather and weak KAM theories for contact Hamiltonian systems $H(x,u,p)$ with certain dependence on the contact variable $u$.
 For the Lipschitz dependence case, we obtain some properties of the Ma\~{n}\'{e} set.
 For the non-decreasing case, we  provide some information on the Aubry set, such as the comparison property, graph property and
  { a partially ordered relation}  for the collection of all projected Aubry sets with respect to backward weak KAM solutions. Moreover, we find a new  flow-invariant set $\tilde{\mathcal{S}}_s$ consists of {\it strongly} static orbits, which coincides with the Aubry set $\tilde{\mathcal{A}}$ in classical Hamiltonian systems. Nevertheless, a class of examples are constructed to show
            $\tilde{\mathcal{S}}_s\subsetneqq\tilde{\mathcal{A}}$ in the contact case.
  As their applications,  we  find some new phenomena appear even if the strictly increasing dependence of $H$ on  $u$ fails  at only one point, and we  show that there is a difference for the vanishing discount problem from the negative direction between the {\it minimal} viscosity solution and {\it non-minimal} ones.
\end{abstract}

\date{\today}
\maketitle

\tableofcontents


\section{Introduction and main results}
\setcounter{equation}{0}
\subsection{Motivation and assumptions}
In the early 1990s, based on Tonelli variational principle in  Lagrangian systems, Mather \cite{M1,Mvc} founded a seminal work  on global action-minimizing orbits generated by Hamilton equations.
 The method has a profound impact on many fields, such as Hamiltonian dynamics, Hamilton-Jacobi equations and symplectic geometry (see \cite{Be,Be1,FS} for instance).
In the mid-1990's, Fathi developed weak KAM theory to study the dynamics of Hamiltonian systems and Hamilton-Jacobi equations. Weak KAM theory provides an explanation for Mather theory from  the PDE point of view and thus builds a bridge between Mather theory and viscosity solutions of  Hamilton-Jacobi equation
\begin{align}\label{hhjj}
H(x,Du)=c(H), \quad x\in M,
\end{align}
where $c(H)$ is  the critical value of $H$. Under the assumption that $H$ is a Tonelli Hamiltonian, it was shown that backward weak KAM solutions and viscosity solutions of equation (\ref{hhjj}) are the same (see \cite{CIL,CL2,LPV} for related issues on  viscosity solutions). The fundamental reference on weak KAM theory is \cite{Fat-b}.

Correspondingly, one has a natural generalization of
Hamiltonian systems, the so-called contact Hamiltonian systems \cite{Arn}. In recent years, several applications of contact Hamiltonian dynamics have been found, ranging from thermodynamics to classical and statistical mechanics. For more details on contact Hamiltonian systems, we refer the reader to  \cite{Bra,B22,BCT1,DeleonS,DeleonV,VDeleon} for a series of nice introduction and geometrical description of these systems.

 Recently, the Aubry-Mather theory for conformally symplectic systems was established by Mar\`{o} and  Sorrentino in \cite{MS}. The conformally symplectic systems are closely related to discounted Hamilton-Jacobi equations. The discounted Hamilton-Jacobi equation is a specific example of contact cases, for which the weak KAM theory was developed by Mitake and Soga in \cite{MK}. Both cases above were proceeded under Tonelli assumption and $\frac{\partial H}{\partial u}\equiv \lambda>0$.

 In \cite{WWY2},
 the authors generalized some fundamental results of Aubry-Mather theory and weak KAM theory from Hamiltonian systems to contact Hamiltonian systems under Tonelli assumption and  $0<\frac{\partial H}{\partial u}\leq \lambda$, where the analysis was  owing to a new implicit variational principle  and a dynamical approach developed in \cite{SWY,WWY,WWY1}.  It is worth mentioning that  an alternative variational formulation  was provided in \cite{CCWY,CCJWY,LTW}  in light of G. Herglotz's work  \cite{her}, which is of explicit form  with nonholonomic constraints.
See \cite{WY} for recent work on the implicit variational principle for contact Hamiltonian systems in the time-dependent and non-compact case.

In  this paper,  we continue to study the global dynamics of the contact Hamiltonian system
\begin{align}\label{c}\tag{CH}
\left\{
        \begin{array}{l}
        \dot{x}=\frac{\partial H}{\partial p}(x,u,p),\\
        \dot{p}=-\frac{\partial H}{\partial x}(x,u,p)-\frac{\partial H}{\partial u}(x,u,p)p,\qquad (x,u,p)\in T^*M\times\mathbb{R},\\
        \dot{u}=\frac{\partial H}{\partial p}(x,u,p)\cdot p-H(x,u,p).
         \end{array}
         \right.
\end{align}
under assumptions as follows. Let $H:T^*M\times\mathbb{R}\to \mathbb{R}$ be a $C^3$ function satisfying
\begin{itemize}
	\item [\textbf{(H1)}] {\it Strict convexity}:  the Hessian $\frac{\partial^2 H}{\partial p^2} (x,u,p)$ is positive definite for all $(x,u,p)\in T^*M\times\R$;
	\item [\textbf{(H2)}] {\it Superlinearity}: for every $(x,u)\in M\times\R$, $H(x,u,p)$ is  superlinear in $p$;
	\item [\textbf{(H3)}] {\it Non-decreasing}: there is a constant $\lambda>0$ such that for every $(x,u,p)\in T^{\ast}M\times\R$,
		\begin{equation*}
		0\leq \frac{\partial H}{\partial u}(x,u,p)\leq \lambda,
		\end{equation*}
\end{itemize}
where $M$ is a connected, closed and smooth Riemannian manifold, and $T^*M$ denotes the cotangent bundle of $M$. In order to deal with global dynamics of (\ref{c}), we assume additionally
\begin{itemize}
\item [\textbf{(A)}] {\it Admissibility}:  the stationary Hamilton-Jacobi equation
\begin{align}\label{hj}\tag{HJ}
H(x,u,Du)=0,\quad x\in M,
\end{align}
 has a viscosity solution.
\end{itemize}
From the dynamical point of view (\cite{WWY2}), under the assumptions (H1)-(H3), $H$ is admissible if and only if there exists $a\in \R$ such that
\[\inf_{u\in C^\infty(M,\R)}\sup_{x\in M}H(x,a,Du)=0.\]

We are devoted to extend some fundamental results of Aubry-Mather theory and weak KAM theory from Hamiltonian systems $H(x,p)$ to contact Hamiltonian systems $H(x,u,p)$ under (H1)-(H3) and (A). Different from \cite{WWY2}, this generalization is compatible with classical Hamiltonian systems (\cite{Fat-b,M1,Mvc}).
In order to deal with general cases, we have the following new challenges:
  \begin{itemize}
  \item   In classical Hamiltonian systems, due to its conservativity, the forward and backward objects can be transferred from each other. One can obtain certain ``information" by considering {\it either} forward objects {\it or} backward objects, where ``information" can be provided by some  tools like  Lax-Oleinik semigroup, weak KAM solution,  action function and  semi-static orbits  {\it etc.} In contact cases,  the correspondence between the forward and backward ``information" does not hold any more. The assumption (H3) admits both conservative and  dissipative cases. Therefore, we need to consider {\it both}  forward  {\it and} backward objects. The difficulties are closely related to the dependence of $H(x,u,p)$ on $u$. If $0<\frac{\partial H}{\partial u}\leq \lambda$, the backward Lax-Oleinik semigroup contracts along the evolution in time, while the forward Lax-Oleinik semigroup dissipates.  If $0\leq \frac{\partial H}{\partial u}\leq \lambda$, one has the {\it coexistence} of contraction, conservation and dissipation, which increases  certain difficulties to the analysis on the dynamics generated by (\ref{c}).
  \item    In view of the coupling issue of (\ref{c}),  we have to consider the action minimizing orbits and the flow invariant sets in $T^*M\times\R$ instead of $T^*M$. Moreover,  it is not natural to expect the compactness of action minimizing sets, since (backward/forward) weak KAM solutions are not unique under the assumption (H3). Due to the non-uniqueness of (backward/forward) weak KAM solutions, the action minimizing sets with respect to the solutions are also necessary to be involved.
  \end{itemize}

In this paper, we  prove the existence of  Ma\~{n}\'{e} sets $\tilde{\mathcal{N}}$,  Aubry sets $\tilde{\mathcal{A}}$ and  Mather sets $\tilde{\mathcal{M}}$. Moreover, we  provide more information on the Aubry set, such as the comparison property,  graph property and
  { a partially ordered relation}  for the collection of all projected Aubry sets with respect to backward weak KAM solutions. Compared to the definition of the static curve, we find a new  flow-invariant set $\tilde{\mathcal{S}}_s$ consists of so-called { \it strongly} static orbits, which coincides with the Aubry set $\tilde{\mathcal{A}}$ in classical Hamiltonian systems. Nevertheless, a class of examples are constructed to show
            $\tilde{\mathcal{S}}_s\subsetneqq\tilde{\mathcal{A}}$. Besides, $\tilde{\mathcal{A}}$ is not chain-recurrent in these examples.
  As applications of these results, we show
  \begin{itemize}
  \item   some new phenomena appear from both dynamical and PDE aspects, even if the strictly increasing dependence on the contact variable $u$ fails  at only one point;
  \item  there is a difference for the vanishing discount problem from the negative direction between the {\it minimal} viscosity solution and {\it non-minimal} ones.
  \end{itemize}

\subsection{Ma\~{n}\'{e} sets}
The authors \cite{WWY2} introduced the Aubry set and the Mather set for strictly increasing contact Hamiltonian systems. Due to the non-uniqueness of backward weak KAM solutions, the notion of the  Ma\~{n}\'{e} set is needed for non-decreasing cases. Following  Ma\~{n}\'{e} (\cite{CDI,CI,Mn3}),  we have

\begin{definition}[Semi-static curves]\label{semdepp}
\
\begin{itemize}
\item
A curve $(x(\cdot),u(\cdot)):[0,+\infty)\to M\times\mathbb{R}$ is called positively semi-static,  if it is positively minimizing and for each $t_1\leq t_2\in[0,+\infty)$, there holds
	\begin{equation}\label{3-3}
	u(t_2)=\inf_{s>0}h_{x(t_1),u(t_1)}(x(t_2),s).
	\end{equation}
\item
 A curve $(x(\cdot),u(\cdot)):(-\infty,0]\to M\times\mathbb{R}$ is called negatively semi-static, if it is negatively minimizing and (\ref{3-3}) holds for any $t_1$, $t_2\in(-\infty,0]$ with $t_1\leq t_2$. A curve $(x(\cdot),u(\cdot)):\mathbb{R}\to M\times\mathbb{R}$ is called  semi-static, if it is both positively and negatively semi-static.
 \end{itemize}
\end{definition}

In the case with $0<\frac{\partial H}{\partial u}\leq \lambda$, static curves are the same as semi-static ones (see Remark \ref{comtopr} below).  Let $p(t):=\frac{\partial L}{\partial \dot{x}}(x(t),u(t),\dot{x}(t))$. Then $(x(t),u(t),p(t))$ is a solution of equations (\ref{c}). We denote
\begin{itemize}
\item
 $\Phi_t:T^*M\times \R\ri T^*M\times \R$, the local flow generated by (\ref{c});
 \item  $\pi:T^*M\times\R\rightarrow M$, the standard projection once and for all.
 \end{itemize}
  Moreover, one can define
(positively, negatively) semi-static  orbit of $\Phi_t$ from semi-static curves by adding the $p(t)$-component.

\begin{definition}[Ma\~{n}\'{e} set]\label{audeine}
We call the set of all semi-static orbits the Ma\~{n}\'{e} set of $H$, denoted by $\tilde{\mathcal{N}}$.
We call $\mathcal{N}:=\pi\tilde{\mathcal{N}}$ the projected Ma\~{n}\'{e} set.
 We define $\tilde{\mathcal{N}}^+$ (resp. $\tilde{\mathcal{N}}^-$) as the set of all positively (resp. negatively) semi-static orbits.
\end{definition}

By the definition of Ma\~{n}\'{e} set, $\tilde{\mathcal{N}}$ is an invariant subset of $T^*M\times\mathbb{R}$ by $\Phi_t$. Let $\cS_-$ and $\cS_+$ be the sets of all backward weak KAM solutions and forward weak KAM solutions of $H(x,u,Du)=0$ respectively. Let $v_-\in \cS_-$, $v_+\in \cS_+$. Define
\[\tilde{\N}_{v_-}:=\tilde{\N}\cap G_{v_-},\quad \tilde{\N}_{v_+}:=\tilde{\N}\cap G_{v_+},\quad \N_{v_-}:=\pi\tilde{\N}_{v_-},\quad \N_{v_+}:=\pi\tilde{\N}_{v_+},
\]
where $G_{v_\pm}$ denote the Legendrian pseudographs of $v_\pm$, namely
\begin{equation}\label{gv--}
G_{v_\pm}:=\overline{\big\{(x,u,p): Dv_\pm(x)\ \text{exists},\  u=v_\pm(x),\ p=Dv_\pm(x)\big\}}.
\end{equation}
 By \cite[Theorem 1.1]{WWY2},
	the contact vector field generates a semi-flow $\Phi_{t}$ $(t\leq 0)$ on $G_{v_-}$ and a semi-flow $\Phi_t$ $(t\geq 0)$ on $G_{v_+}$. Define
\begin{equation}\label{sbic}
\tilde{\Sigma}_{v_-}:=\bigcap_{t\geq 0}\Phi_{-t}(G_{v_-}),\quad \tilde{\Sigma}_{v_+}:=\bigcap_{t\geq 0}\Phi_{t}(G_{v_+}),\quad \Sigma_{v_+}:=\pi\tilde{\Sigma}_{v_+},  \quad \Sigma_{v_-}:=\pi\tilde{\Sigma}_{v_-}.
\end{equation}
Given $v_-\in \cS_-$, let $ v_+(x):=\lim_{t\ri\infty}T_t^+v_-(x)$. Then $v_+\in \cS_+$. Similarly, given $v_+\in \cS_+$, let $ v_-(x):=\lim_{t\ri\infty}T_t^-v_+(x)$. Then $v_-\in \cS_-$.
Let \[\mathcal{I}_{v_-}:=\{x\in M\ |\ v_-(x)=\lim_{t\ri\infty}T_t^+v_-(x)\},\quad \mathcal{I}_{v_+}:=\{x\in M\ |\ v_+(x)=\lim_{t\ri\infty}T_t^-v_+(x)\}, \]for each $v_\pm\in \mathcal{S}_\pm$.  It follows \cite[Proposition 4.2, Remark 4.2]{WWY2} that both $v_\pm$ are of class $C^{1,1}$ on $\mathcal{I}_{v_\pm}$. Define
\begin{equation}\label{svvvec}
\tilde{\mathcal{I}}_{v_\pm}:=\{(x,u,p):\ x\in \mathcal{I}_{v_\pm},\ u=v_\pm(x),\ p=Dv_\pm(x) \}.
\end{equation}
Given $v_\pm\in \cS_\pm$, if $ v_+(x)=\lim_{t\ri\infty}T_t^+v_-(x)$ and $ v_-(x)=\lim_{t\ri\infty}T_t^-v_+(x)$, then $(v_-,v_+)$ is called a {\it conjugate pair}. $v_-$ (resp. $v_+$) is called a {\it principal} backward (resp. forward) weak KAM solution.
\begin{theorem}\label{88996}
There hold
\begin{itemize}
\item [(1)] {\bf{Covering property}}: 	\[
	\tilde{\mathcal{N}}^-=\cup_{v_-\in \mathcal{S}_-}G_{v_-},\quad \tilde{\mathcal{N}}^+=\cup_{v_+\in \mathcal{S}_+}G_{v_+}.
	\]
Moreover, $\pi \tilde{\N}^{\pm}=M$.
\item [(2)] {\bf Local characterizations}:
\[\tilde{\N}_{v_-}=\tilde{\mathcal{I}}_{v_-}=\tilde{\Sigma}_{v_-},\quad \tilde{\N}_{v_+}=\tilde{\mathcal{I}}_{v_+}=\tilde{\Sigma}_{v_+},\]
where $\tilde{\mathcal{I}}_{v_\pm}$ and $\tilde{\Sigma}_{v_\pm}$ are defined as (\ref{sbic}) and (\ref{svvvec}) below. Moreover, if $(v_-,v_+)$ is a  conjugate pair, then
\[\tilde{\N}_{v_-}=\tilde{\N}_{v_+}=G_{v_-}\cap G_{v_+}.\]
\item [(3)]  {\bf Global characterizations}: $\tilde{\N}^{\pm}$ and $\tilde{\N}$ are non-empty, closed and
\[\tilde{\N}=\cup_{v_-\in \cS_-}\tilde{\N}_{v_-}=\cup_{v_+\in \cS_+}\tilde{\N}_{v_+}.\]
\end{itemize}
\end{theorem}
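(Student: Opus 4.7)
The plan is to prove parts (1), (2), (3) in order: part (1) furnishes the essential covering of $\tilde{\mathcal{N}}^\pm$ by Legendrian pseudographs of weak KAM solutions, part (2) descends the analysis to individual graphs $G_{v_\pm}$, and part (3) assembles global properties. Throughout I would rely on the implicit variational principle from \cite{WWY2}, continuity of the fundamental solution $h_{x,u}(y,s)$, and the semi-flow properties of $\Phi_t$ on $G_{v_\pm}$ recalled just before the statement.

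For (1), I would prove $G_{v_-}\subseteq \tilde{\mathcal{N}}^-$ by starting at a differentiability point $x_0$ of $v_-$: the backward characteristic $t\mapsto \Phi_{-t}(x_0,v_-(x_0),Dv_-(x_0))$ projects to a calibrated curve for $v_-$, and calibration combined with the weak KAM identity $T_t^- v_-=v_-$ forces the infimum form \eqref{3-3} via the very definition of $h$. The extension to all of $G_{v_-}$ is then by continuity of $\Phi_t$ and of $h$. Conversely, given a negatively semi-static orbit through $(x_0,u_0,p_0)$, I would form a Peierls-type infimum $v_-(y):=\inf_{s>0}h_{x(-s),u(-s)}(y,s)$ along the orbit; the semi-static identity (\ref{3-3}) is precisely what is needed to check that $v_-$ is a backward weak KAM solution with $v_-(x_0)=u_0$ and $(x_0,u_0,p_0)\in G_{v_-}$. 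The equality $\pi\tilde{\mathcal{N}}^\pm=M$ is then immediate from (A) giving some $v_\pm$ and the fact that $\pi G_{v_\pm}=M$.

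For (2), I would establish the cycle $\tilde{\mathcal{N}}_{v_-}\subseteq \tilde{\mathcal{I}}_{v_-}\subseteq \tilde{\Sigma}_{v_-}\subseteq \tilde{\mathcal{N}}_{v_-}$. The first inclusion: a semi-static orbit on $G_{v_-}$ is in particular positively semi-static, which at its base point $x_0$ forces $v_-(x_0)=\lim_{t\to\infty}T_t^+ v_-(x_0)$ because positive calibration relative to $v_-$ holds all the way along the forward orbit. The second: on $\mathcal{I}_{v_-}$ the solution $v_-$ is $C^{1,1}$ by \cite[Proposition 4.2]{WWY2}, and its graph is backward-$\Phi_t$-invariant, hence sits inside $\bigcap_{t\geq 0}\Phi_{-t}(G_{v_-})=\tilde{\Sigma}_{v_-}$. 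The third is the delicate one: for $z\in\tilde{\Sigma}_{v_-}$ the full backward orbit lies in $G_{v_-}$ and yields negative semi-staticity directly; to upgrade to positive semi-staticity I would pass to the conjugate $v_+:=\lim_{t\to\infty}T_t^+ v_-$ and argue that $z\in \tilde{\Sigma}_{v_+}$ as well, whence the symmetric assertion for forward semi-flows gives positive semi-staticity. The conjugate-pair identity $\tilde{\mathcal{N}}_{v_-}=\tilde{\mathcal{N}}_{v_+}=G_{v_-}\cap G_{v_+}$ follows from the same reasoning, since the intersection is exactly where both semi-flows of $\Phi_t$ act.

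For (3), non-emptiness of $\tilde{\mathcal{N}}^\pm$ is immediate from (1) and (A); non-emptiness of $\tilde{\mathcal{N}}$ follows by fixing a conjugate pair $(v_-,v_+)$ (produced by iterated Lax--Oleinik limits) and selecting a point of $G_{v_-}\cap G_{v_+}$, which by the conjugate-pair clause of (2) lies in $\tilde{\mathcal{N}}$. Closedness follows because semi-staticity is an equality of continuous functions of the orbit, stable under $C^0$ limits. The decomposition $\tilde{\mathcal{N}}=\cup_{v_-}\tilde{\mathcal{N}}_{v_-}=\cup_{v_+}\tilde{\mathcal{N}}_{v_+}$ is immediate from (1) applied to each semi-static orbit. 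The step I expect to be the main obstacle is the inclusion $\tilde{\Sigma}_{v_-}\subseteq \tilde{\mathcal{N}}_{v_-}$ inside (2): promoting pure backward invariance on $G_{v_-}$ to full semi-staticity requires the conjugate-pair machinery, and under the non-strict assumption (H3) the coexistence of conservative and dissipative behavior makes the matching of forward and backward calibrated curves substantially less automatic than in the strictly increasing setting treated in \cite{WWY2}.
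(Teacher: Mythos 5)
Your overall architecture coincides with the paper's (backward semi-flow on $G_{v_-}$ plus calibration for $G_{v_-}\subseteq\tilde{\mathcal{N}}^-$, a cycle of inclusions among $\tilde{\mathcal{N}}_{v_-}$, $\tilde{\mathcal{I}}_{v_-}$, $\tilde{\Sigma}_{v_-}$ for (2), and a limiting argument for closedness), but there is a genuine gap exactly at the heart of the converse inclusion $\tilde{\mathcal{N}}^-\subseteq\cup_{v_-\in\mathcal{S}_-}G_{v_-}$ in (1). You set $v_-(y):=\inf_{s>0}h_{x(-s),u(-s)}(y,s)$ and assert that the semi-static identity \eqref{3-3} ``is precisely what is needed'' to check $v_-\in\mathcal{S}_-$. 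That assertion is the whole difficulty. Under (H3) (and a fortiori under $|\partial H/\partial u|\le\lambda$, the generality in which the paper actually proves this theorem, cf.\ Remark \ref{lipcondi}) the Lax--Oleinik semigroup is not non-expansive, and the Markov property together with negative minimality only yields the one-sided inequality $T_t^-w\ge w$ for such a Busemann-type infimum; it does not make $w$ a fixed point of $T_t^-$. The paper's Lemma \ref{ke} instead takes the double infimum $w(x)=\inf_{\tau\ge0}\inf_{s>0}h_{x(-\tau),u(-\tau)}(x,s)$, proves boundedness, monotonicity in $\tau$ and equi-Lipschitz estimates, passes to $w_\infty=\lim_{t\to+\infty}T_t^-w$, and then — crucially for the covering statement — proves that the limiting procedure does not alter the values along the given orbit, i.e.\ $w_\infty(x(-\tau))=u(-\tau)$. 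Without an argument of this type your construction does not deliver $v_-(x_0)=u_0$, so the inclusion is not established; this is exactly the step the authors single out as the key technical ingredient.

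Two secondary points. First, even granting $v_-\in\mathcal{S}_-$ with $v_-(x_0)=u_0$, membership $(x_0,u_0,p_0)\in G_{v_-}$ requires pinning down the $p$-component: since $G_{v_-}$ is the closure of the gradient graph, one must show $p_0\in D^*v_-(x_0)$, which the paper gets from $D^*v_-(x)=\{p\in D^+v_-(x)\,:\,H(x,v_-(x),p)=0\}$ (Lemma \ref{dstar}), the calibrated-minimizer inclusion $\frac{\partial L}{\partial\dot x}\in D^+v_-$ (Lemma \ref{diffe11}), and $H=0$ along the orbit; your sketch omits this. Second, in (3) you obtain non-emptiness of $\tilde{\mathcal{N}}$ by ``selecting a point of $G_{v_-}\cap G_{v_+}$'' for a conjugate pair, but the non-emptiness of that intersection is itself a claim needing proof (e.g.\ via $\tilde{\Sigma}_{v_-}=\bigcap_{t\ge0}\Phi_{-t}(G_{v_-})$ being a nested intersection of non-empty compact sets, or via $\mathcal{I}_{v_-}\neq\emptyset$). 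Your rearranged cycle in (2), $\tilde{\mathcal{N}}_{v_-}\subseteq\tilde{\mathcal{I}}_{v_-}\subseteq\tilde{\Sigma}_{v_-}\subseteq\tilde{\mathcal{N}}_{v_-}$ in place of the paper's $\tilde{\mathcal{N}}_{v_-}\subseteq\tilde{\Sigma}_{v_-}\subseteq\tilde{\mathcal{I}}_{v_-}\subseteq\tilde{\mathcal{N}}_{v_-}$, is workable, but the step ``$z\in\tilde{\Sigma}_{v_-}\Rightarrow z\in\tilde{\Sigma}_{v_+}$'' silently uses that $v_-=v_+$ on $\Sigma_{v_-}$ with matching derivatives, i.e.\ the same results from \cite{WWY2} (Proposition 4.5 and Lemma \ref{iinv}) the paper invokes, so it is a reordering rather than a shortcut.
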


\begin{remark}\label{lipcondi}
Theorem \ref{88996} still holds if the assumption {\rm (H3)} is replaced by $|\frac{\partial H}{\partial u}|\leq \lambda$. We  prove this theorem under the assumption $|\frac{\partial H}{\partial u}|\leq \lambda$ instead of {\rm (H3)}.
\end{remark}

To prove Theorem \ref{88996}, a key ingredient is to construct a (backward/forward) weak KAM solution by a (negatively/positively) semi-static curve. Since the non-expensiveness of (backward/forward) Lax-Oleinik semigroup $T^{\pm}_t$ fails under the assumption $|\frac{\partial H}{\partial u}|\leq \lambda$, it brings certain difficulties for the analysis on the asymptotic behavior of $T^{\pm}_t$. To deal with this issue, we need to pay more attention on the evolution of the action function along  semi-static curves.
\subsection{Aubry sets and Mather sets}

By definition, for each $v_-\in \cS_-$, $\tilde{\mathcal{N}}_{v_-}$ is an invariant subset of $T^*M\times\mathbb{R}$ by $\Phi_t$.
By Theorem \ref{88996},  $\tilde{\mathcal{N}}_{v_-}$ is non-empty and compact. Consequently,
there exist Borel $\Phi_t$-invariant probability measures supported in $\tilde{\mathcal{N}}_{v_-}$, called {\it Mather measures}. Denote by $\mathfrak{M}$ the set of Mather measures for all $v_-\in \cS_-$. {\it Mather set} of contact Hamiltonian systems (\ref{c}) is defined by
\[\tilde{\mathcal{M}}=\mathrm{cl}\left(\bigcup_{\mu\in \mathfrak{M}}\text{supp}(\mu)\right).\]

By the definitions of Aubry set and Mather set, both of them are invariant subsets of $T^*M\times\mathbb{R}$ by $\Phi_t$. Define
\[\tilde{\mathcal{A}}_{v_-}:=\tilde{\A}\cap G_{v_-},\quad \tilde{\A}_{v_+}:=\tilde{\A}\cap G_{v_+},\quad\A_{v_-}:=\pi\tilde{\A}_{v_-},  \quad \A_{v_+}:=\pi\tilde{\A}_{v_+},
\]
\[\tilde{\mathcal{M}}_{v_-}:=\tilde{\mathcal{M}}\cap G_{v_-},\quad \tilde{\mathcal{M}}_{v_+}:=\tilde{\mathcal{M}}\cap G_{v_+},\quad\mathcal{M}_{v_-}:=\pi\tilde{\mathcal{M}}_{v_-},  \quad \mathcal{M}_{v_+}:=\pi\tilde{\mathcal{M}}_{v_+}.
\]

By definitions, $\tilde{\mathcal{M}}\subseteq \tilde{\N}^{\pm}$ and $\tilde{\A}\subseteq \tilde{\N}^{\pm}$. Moreover,  we obtain the following.

\begin{theorem}\label{namss}
\
\begin{itemize}
\item [(1)] {\bf Characterizations}:
\begin{align*}
\tilde{\A}=\cup_{v_-\in \cS_-}\tilde{\A}_{v_-}=\cup_{v_+\in \cS_+}\tilde{\A}_{v_+},\quad\tilde{\mathcal{M}}=\cup_{v_-\in \cS_-}\tilde{\mathcal{M}}_{v_-}=\cup_{v_+\in \cS_+}\tilde{\mathcal{M}}_{v_+}.
\end{align*}
\item [(2)] {\bf Asymptotic behavior}:
For each $v_-\in \cS_-$ and each \[(x_1,v_1,p_1)\in \tilde{\mathcal{N}}_{v_-}^-,\quad (x_2,v_2,p_2)\in \tilde{\mathcal{N}}_{v_-}^+,\] the $\alpha$-limit set of $(x_1,v_1,p_1)$ and $\omega$-limit set of $(x_2,v_2,p_2)$ are contained in $\tilde{\mathcal{A}}_{v_-}$. Moreover, for each $(x_1,v_1,p_1)\in \tilde{\mathcal{N}}^-$, $(x_2,v_2,p_2)\in \tilde{\mathcal{N}}^+$, the $\alpha$-limit set of $(x_1,v_1,p_1)$ and $\omega$-limit set of $(x_2,v_2,p_2)$ are contained in $\tilde{\mathcal{A}}$.
\item [(3)] {\bf Inclusion relation}: Aubry set $\tilde{\A}$ and Mather set $\tilde{\mathcal{M}}$ are closed. Moreover,
\[\emptyset\neq\tilde{\mathcal{M}}\subseteq \tilde{\mathcal{A}}\subseteq\tilde{\mathcal{N}}.\]
\end{itemize}
\end{theorem}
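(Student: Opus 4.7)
The plan is to leverage Theorem \ref{88996} and the compactness of the local pieces $\tilde{\N}_{v_\pm}$, treating the three parts in turn.

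\emph{Part (1).} For the Aubry set, $\tilde{\A}\subseteq\tilde{\N}$ combined with the covering property of Theorem \ref{88996}(1) gives $\tilde{\A}\subseteq\bigcup_{v_-\in\cS_-}G_{v_-}$ and the analogous inclusion for $v_+$. Intersecting with each $G_{v_\pm}$ and using the defining identity $\tilde{\A}_{v_\pm}=\tilde{\A}\cap G_{v_\pm}$ produces both equalities. For the Mather set, each Mather measure $\mu$ is by construction supported in some $\tilde{\N}_{v_-}\subseteq G_{v_-}$, so $\mathrm{supp}(\mu)\subseteq\tilde{\mathcal{M}}\cap G_{v_-}=\tilde{\mathcal{M}}_{v_-}$; taking closures gives $\tilde{\mathcal{M}}=\bigcup_{v_-}\tilde{\mathcal{M}}_{v_-}$. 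For the $v_+$ version one replaces $v_-$ by its conjugate $v_+=\lim_{t\to\infty}T_t^+v_-$ and uses the identity $\tilde{\N}_{v_-}=G_{v_-}\cap G_{v_+}\subseteq G_{v_+}$ from Theorem \ref{88996}(2).

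\emph{Part (2).} Fix $v_-\in\cS_-$ and a point $(x_1,v_1,p_1)\in\tilde{\N}_{v_-}^{-}$. By Theorem \ref{88996}(2)--(3), the backward orbit is contained in the compact, $\Phi_t$-invariant set $\tilde{\N}_{v_-}\subseteq G_{v_-}$, so the $\alpha$-limit is non-empty, compact, $\Phi_t$-invariant and contained in $G_{v_-}$. For any $(y,w,q)$ in this $\alpha$-limit, one exploits the recurrence to concatenate arbitrarily long pieces of the negatively semi-static orbit with short, nearly closing segments; by the semi-static identity (\ref{3-3}) and the monotonicity of $h_{x_0,u_0}$ in $u_0$ granted by (H3), these almost-closed chains force (\ref{3-3}) to hold in both directions along the orbit through $(y,w,q)$, making it static. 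Hence $(y,w,q)\in\tilde{\A}\cap G_{v_-}=\tilde{\A}_{v_-}$. The $\omega$-statement is proved symmetrically using a conjugate $v_+$. The global conclusion for arbitrary points of $\tilde{\N}^{\pm}$ reduces to the local one by selecting $v_\pm$ with the given point in $G_{v_\pm}$ via the covering property.

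\emph{Part (3).} Closedness of $\tilde{\mathcal{M}}$ is immediate from its definition as a closure; closedness of $\tilde{\A}$ follows from Part (1) once one verifies that each $\tilde{\A}_{v_\pm}$ is closed, which in turn rests on the continuity of $v_\pm$, closedness of $G_{v_\pm}$, and lower semicontinuity of the action. Non-emptiness of $\tilde{\mathcal{M}}$ is obtained by Krylov--Bogolyubov applied to the compact $\Phi_t$-invariant set $\tilde{\N}_{v_-}$ for any $v_-\in\cS_-$. The inclusions $\tilde{\mathcal{M}}\subseteq\tilde{\A}\subseteq\tilde{\N}$ then hold because supports of invariant probability measures consist of recurrent points (hence static by the argument of Part (2)), while static orbits are tautologically semi-static.

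The main obstacle will be Part (2). In the conservative case $\partial_u H\equiv 0$ the passage from recurrence to staticness is automatic via energy conservation, but under (H3) the $u$-component may dissipate along a returning trajectory, and the delicate step is to show that the cumulative dissipation across an almost-closed chain tends to zero. This requires combining the invariance of $G_{v_-}$ under the backward semi-flow, the $C^{1,1}$ regularity of $v_-$ on $\mathcal{I}_{v_-}$ from Theorem \ref{88996}, and careful monotonicity estimates on $h_{x_0,u_0}(x,t)$ in the $u_0$-variable.
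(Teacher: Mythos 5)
Your outline follows the same skeleton as the paper (Part (1) from the covering property, Part (2) by turning recurrence of limit points into the static identity, Part (3) via Krylov--Bogolyubov and Poincar\'{e} recurrence), but the two steps you yourself flag as delicate are exactly where the proposal is incomplete, and one deduction is wrong. In Part (2) the passage from recurrence to staticity is left as a plan: the ingredient that makes the limit work is not monotonicity in $u_0$ (Proposition \ref{nonehh}) nor the $C^{1,1}$ regularity of $v_-$ on $\mathcal{I}_{v_-}$ (which plays no role here), but the \emph{uniform Lipschitz continuity of the Ma\~{n}\'{e} potential and Peierls barrier in all three arguments}, $(x_0,u_0,x)\mapsto \inf_{s>0}h_{x_0,u_0}(x,s)$ and $(x_0,u_0,x)\mapsto h_{x_0,u_0}(x,+\infty)$ (Propositions \ref{aclipp}--\ref{aclippinf}), which the paper proves separately by Markov-property estimates. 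With that in hand no ``almost-closed chain'' or dissipation estimate is needed: along the original positively semi-static orbit one writes $u(s_{n+1})=\inf_{s>0}h_{x(t_n),u(t_n)}(x(s_{n+1}),s)$ between successive near-returns $s_n<t_n:=s_n+(t_2-t_1)<s_{n+1}$ and lets $n\to\infty$, obtaining the static identity for $t_1<t_2$ directly; your proposal never closes this step, and since the base points $x(t_n)$ only converge, Lipschitz dependence on $u_0$ alone cannot justify the limit. A small but real slip in the same part: the backward orbit of a point of $\tilde{\N}^-_{v_-}$ is contained in the compact, backward-invariant set $G_{v_-}$, not in $\tilde{\N}_{v_-}$.

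In Part (3) the deduction of closedness of $\tilde{\A}$ from Part (1) plus closedness of each $\tilde{\A}_{v_\pm}$ is invalid: $\cS_-$ is in general an uncountable family (cf.\ Proposition \ref{exammmmpp}, where the sets $\A_{v_i}$ genuinely differ), and a union of closed sets over such a family need not be closed; moreover closedness of $\tilde{\A}_{v_-}=\tilde{\A}\cap G_{v_-}$ is what you are trying to prove, so the argument is circular unless you show directly that a limit of static orbits is static. The paper does exactly that, using Proposition \ref{lem3.1} ($u(t)=h_{x(s),u(s)}(x(t),+\infty)$ along static curves) together with the Lipschitz continuity of the Peierls barrier. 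Closedness of $\tilde{\A}$ is then also needed to complete your $\tilde{\mathcal{M}}\subseteq\tilde{\A}$ step: Poincar\'{e} recurrence gives only a full-measure (dense) subset of $\mathrm{supp}(\mu)$ consisting of recurrent points, not all of $\mathrm{supp}(\mu)$, so one concludes by density plus closedness, and the ``recurrent $\Rightarrow$ static'' implication again rests on the unproved continuity of the Ma\~{n}\'{e} potential. Part (1) is essentially fine (the equalities follow from $\tilde{\A},\tilde{\mathcal{M}}\subseteq\tilde{\N}\subseteq\cup_{v_\pm}G_{v_\pm}$ rather than from ``taking closures''), but Parts (2) and (3) as written have genuine gaps.
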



\subsection{More on Aubry sets}
In this part, we  provide more information on Aubry sets. The first one is concerned with the  comparison property of $u_-\in \cS_-$.

\begin{theorem}[\bf Comparison property]\label{crogrpccc}
 Given  $u_-,v_-\in \cS_-$, if $u_-\leq v_-$ on $\A_{v_-}$, then $u_-\leq v_-$ on $M$. Moreover, if $u_-=v_-$ on $\A_{u_-}\cup\A_{v_-}$, then $u_-=v_-$ on $M$.
\end{theorem}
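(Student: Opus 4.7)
My plan is to transport the pointwise inequality from $\A_{v_-}$ to an arbitrary point of $M$ via a Gronwall-type comparison along a calibrated curve for $v_-$. Observe first that the second assertion follows from the first by symmetry: applying the first assertion to the pair $(u_-,v_-)$ yields $u_-\le v_-$ on $M$, and applying it to $(v_-,u_-)$ yields $v_-\le u_-$ on $M$. So it suffices to prove the first statement.

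Fix $x\in M$. By Theorem \ref{88996}(1)--(3), there exists a negatively semi-static curve $\gamma:(-\infty,0]\to M$ with $\gamma(0)=x$ whose lift $s\mapsto(\gamma(s),v_-(\gamma(s)),Dv_-(\gamma(s)))$ is an orbit of $\Phi_s$ lying in $\tilde{\N}_{v_-}^-$; by Theorem \ref{namss}(2), the $\alpha$-limit set of this lifted orbit sits inside $\tilde{\A}_{v_-}$, so one can extract $t_n\to+\infty$ with $\gamma(-t_n)\to y_0\in\A_{v_-}$. Calibration by $v_-$ says that $V(s):=v_-(\gamma(s))$ satisfies $\dot V = L(\gamma(s),V,\dot\gamma(s))$ almost everywhere. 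For each $n$, let $W_n(s)$ solve the same Lagrangian ODE on $[-t_n,0]$ with initial value $W_n(-t_n)=u_-(\gamma(-t_n))$; because $u_-$ is a backward weak KAM solution, $u_-(x)=T_{t_n}^- u_-(x)$ is an infimum taken over precisely such $W$-actions, so in particular $u_-(x)\le W_n(0)$.

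Set $\eta_n:=W_n-V$. The mean value theorem gives $\dot\eta_n = f_n(s)\,\eta_n$ a.e., with $f_n(s)=\partial_u L(\gamma(s),\xi_n(s),\dot\gamma(s))$ for some $\xi_n(s)$ between $W_n(s)$ and $V(s)$. Since $\partial_u L=-\partial_u H\in[-\lambda,0]$ by (H3), integration yields
\[
u_-(x)-v_-(x)\;\le\;\eta_n(0)\;=\;c_n\bigl(u_-(\gamma(-t_n))-v_-(\gamma(-t_n))\bigr),\qquad c_n\in\bigl[e^{-\lambda t_n},\,1\bigr].
\]
As $n\to\infty$, continuity of $u_-$ and $v_-$ together with the hypothesis gives the right-hand difference converging to $u_-(y_0)-v_-(y_0)\le 0$; since $c_n\in(0,1]$, the product has non-positive limsup (negative eventually if the limit is strict, else of absolute value bounded by a vanishing quantity). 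Therefore $u_-(x)-v_-(x)\le 0$.

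The main obstacle is precisely this passage to the limit: the Gronwall factor $c_n$ may decay as fast as $e^{-\lambda t_n}$, so any strategy requiring a uniform lower bound on $c_n$ would fail. What makes the argument go through is the one-sided upper bound $c_n\le 1$, which is the dissipative inequality $\partial_u L\le 0$ guaranteed by (H3); the opposite sign of $\partial_u H$ would amplify rather than damp positive deviations and would destroy the comparison.
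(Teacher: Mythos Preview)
Your proof is correct and follows essentially the same route as the paper: take a $(v_-,L,0)$-calibrated curve ending at the given point, use Theorem \ref{namss}(2) to get an $\alpha$-limit point in $\A_{v_-}$, and exploit the sign $\partial_u L\le 0$ from (H3) to propagate the inequality along the curve. The only cosmetic difference is that the paper compares $u_-(\gamma(s))$ directly with $v_-(\gamma(s))$ via domination/calibration and splits into two cases according to whether $F=u_-(\gamma)-v_-(\gamma)$ ever hits zero, whereas your auxiliary $W_n$ (the Herglotz action along $\gamma$ with initial value $u_-(\gamma(-t_n))$, justified by the minimality of $h_{x_0,u_0}$) turns the differential inequality into an exact ODE and lets you avoid that dichotomy.
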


In fact, one has a corresponding result in terms of the Mather set (see Remark \ref{macom} below).   By a completely different approach, Jing, Mitake and Tran \cite{JMT} introduced adjoint measures and they proved that the closure of the union of all supports of those measures  plays a role as the uniqueness set of the HJ equations. It is also remarked that those measures turn out to be projected Mather measures if $H$ satisfies appropriate conditions.

Let $\bar{\mathcal{A}}$ be the dual Aubry set of $\tilde{\A}$ in $TM\times\R$ by the Legendre transformation $p=\frac{\partial L}{\partial \dot{x}}(x,u,\dot{x})$. Let \[\rho: TM\times\R \ri TM,\quad \rho^*: T^*M\times\R \ri T^*M,\quad\Pi: TM\ri M,\quad\Pi^*: T^*M\ri M\] be the standard projections. The second one is about the graph property of $\Pi$ restricting on $\rho(\bar{\A})$.
\begin{theorem}[\bf Graph property]\label{grappff}
	 $\Pi: \rho(\bar{\A})\ri M$ is injective.
\end{theorem}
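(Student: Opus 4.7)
The plan is to adapt Mather's classical crossing argument to the contact setting. Suppose for contradiction that there exist $(x^*, u_1, \dot{x}_1)$ and $(x^*, u_2, \dot{x}_2)$ in $\bar{\mathcal{A}}$ with $\dot{x}_1 \neq \dot{x}_2$. Setting $p_i := \frac{\partial L}{\partial \dot{x}}(x^*, u_i, \dot{x}_i)$, the dual points $(x^*, u_i, p_i) \in \tilde{\mathcal{A}}$ satisfy $\dot{x}_i = \frac{\partial H}{\partial p}(x^*, u_i, p_i)$ and $H(x^*, u_i, p_i) = 0$. By Theorem \ref{namss}(1) there exist backward weak KAM solutions $v_-^1, v_-^2 \in \mathcal{S}_-$ with $(x^*, u_i, p_i) \in \tilde{\mathcal{A}}_{v_-^i}$; Theorem \ref{88996}(2) then identifies $\tilde{\mathcal{A}}_{v_-^i}$ with $\tilde{\mathcal{I}}_{v_-^i}$, so each $v_-^i$ is $C^{1,1}$ on a neighborhood of $x^*$ inside $\mathcal{I}_{v_-^i}$ with $v_-^i(x^*) = u_i$ and $Dv_-^i(x^*) = p_i$. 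The semi-static orbits through $(x^*, u_i, p_i)$ project to $C^2$ curves $\gamma_i:\mathbb{R} \to M$ calibrated by $v_-^i$, with $\gamma_i(0) = x^*$ and $\dot{\gamma}_i(0) = \dot{x}_i$.

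For small $T > 0$, splice $\zeta_T:[-T, T] \to M$ by setting $\zeta_T := \gamma_2$ on $[-T, 0]$ and $\zeta_T := \gamma_1$ on $[0, T]$; this curve is continuous but develops a corner at $t = 0$ of magnitude $|\dot{x}_1 - \dot{x}_2| > 0$. Writing $\Psi_T(a;\alpha)$ for the value at time $T$ of the ODE $\dot v = L(\alpha(t), v, \dot\alpha(t))$ with $v(-T) = a$, three ingredients drive the contradiction: (i) the backward weak KAM inequality $v_-^i(\zeta_T(T)) \leq \Psi_T(v_-^i(\zeta_T(-T)); \zeta_T)$; (ii) the calibration identity $v_-^i(\gamma_i(T)) = \Psi_T(v_-^i(\gamma_i(-T)); \gamma_i|_{[-T,T]})$; and (iii) the monotonicity $0 < \partial_a \Psi_T \leq 1$, which follows from $\partial L/\partial u = -\partial H/\partial u \leq 0$. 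Selecting the splicing order and the weak KAM solution appropriately forces the chain of inequalities coming from (i)--(iii) into equalities, so that $\zeta_T$ realizes the infimum defining the backward Lax--Oleinik semigroup and is itself a calibrated curve for some $v_-^i$. By the implicit variational principle of \cite{WWY, WWY2}, calibrated curves are $C^2$ solutions of the contact Euler--Lagrange system, contradicting the corner at $t = 0$.

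The hard part is closing the chain: making the inequality in (i) into an equality for an appropriate configuration. The comparison of $v_-^1$ with $v_-^2$ at the auxiliary endpoints $\gamma_j(\pm T)$ is controlled by the $C^{1,1}$ expansion $v_-^i(y) = u_i + p_i\cdot(y - x^*) + O(|y - x^*|^2)$, and the sign needed for (iii) to propagate equality through the chain depends on both the value gap $u_1 - u_2$ and the gradient gap $p_1 - p_2$. When $u_1 = u_2$, strict convexity of $H(x^*, u_1, \cdot)$ together with $H(x^*, u_1, p_1) = H(x^*, u_1, p_2) = 0$ forces $(p_1 - p_2) \cdot \dot{x}_1 > 0 > (p_1 - p_2) \cdot \dot{x}_2$, and a case-by-case inspection of the four combinations (two splicing orders, two choices of $v_-^i$) should identify one in which the chain closes. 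When $u_1 \neq u_2$, the zeroth-order discrepancy $u_1 - u_2$ dominates the first-order correction for small $T$, and the sign of $u_1 - u_2$ determines the valid combination. This closing argument, delicate because assumption (H3) permits $\partial H/\partial u$ to vanish and hence $\Psi_T$ to be non-strictly monotone in places, is the technical heart of the proof.
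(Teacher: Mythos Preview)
Your proposal has a genuine gap, and the paper takes a fundamentally different route that you should be aware of.

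The gap is precisely where you locate ``the technical heart of the proof'': closing the chain of inequalities into equalities. Your ingredients (i)--(iii) only ever produce one-sided bounds. Concretely, with $\zeta_T=\gamma_2|_{[-T,0]}\ast\gamma_1|_{[0,T]}$ and $i=1$, applying (i) to $\gamma_2|_{[-T,0]}$ gives $u_1=v_-^1(x^*)\le\Psi_{[-T,0]}(v_-^1(\gamma_2(-T));\gamma_2)$, and then (iii) and (ii) give $\Psi_T(v_-^1(\zeta_T(-T));\zeta_T)\ge v_-^1(\gamma_1(T))=v_-^1(\zeta_T(T))$ --- but this is just (i) again, not an equality. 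To force equality you would need to compare $v_-^1$ and $v_-^2$ at points like $\gamma_2(-T)$, which generally lie outside $\mathcal I_{v_-^1}$. There $v_-^1$ is only semiconcave, so you have the upper Taylor bound $v_-^1(y)\le u_1+p_1\cdot(y-x^*)+O(|y-x^*|^2)$ but no matching lower bound; your ``$C^{1,1}$ expansion'' is not available for $v_-^i$ along $\gamma_j$ with $j\ne i$. When $u_1\ne u_2$ there is no a priori ordering between $v_-^1$ and $v_-^2$ near $x^*$, and the classical Mather sum-of-actions trick (swap both pairs of half-curves and round corners) does not transplant cleanly because the implicit action $h_{x_0,u_0}(x,t)$ couples the value to the curve and is not additive in the same way.

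The paper avoids this entirely. It first proves (Lemma~\ref{lem3.2}) that if $(x,u,p_1)\in\tilde{\mathcal A}$ and $(x,u,p_2)\in\tilde{\mathcal N}^+$ share the same $(x,u)$ then $p_1=p_2$; the argument is a Markov-property computation with the action function $h$, not a variational crossing. The case $u_1\ne u_2$ is then reduced to the equal-$u$ case by a lifting lemma (Lemma~\ref{kkkee}): given a static curve $(x_1(\cdot),u_1(\cdot))$ and a value $v_0\ge u_1(0)$ with $h_{x_0,v_0}(x_0,+\infty)=v_0$, the curve $\big(x_1(t),\inf_{\tau>0}h_{x_0,v_0}(x_1(t),\tau)\big)$ is again static. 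Applying this with $v_0=u_2(0)$ produces a static orbit over $x_1(\cdot)$ at the $u_2$-level, and Lemma~\ref{lem3.2} then forces $\dot x_1(0)=\dot x_2(0)$. This lifting step is the substitute for your unfinished case analysis, and it is not something a direct crossing argument recovers.
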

 In order to prove it, we need to show that for  each static orbit $(x(\cdot),u(\cdot),\dot{x}(\cdot)):\R\ri TM\times\R$, $\dot{x}(t)$ is uniquely determined by $x(t)$ for each $t\in \R$. In view of the complicated structure of $\cS_-$ under the assumption (H3), the injectivity of $\Pi$ is subtle to be confirmed. In addition, due to the appearance of $u$-component in the Legendre transformation $p=\frac{\partial L}{\partial \dot{x}}(x,u,\dot{x})$, one can not expect the injectivity of  the standard projection $\Pi^*$ from  $\rho^*(\tilde{\A})$ to $M$, except for the separated case with $H(x,u,p):=f(x,u)+h(x,p)$.

The third one is about the structure of the projected Aubry set $\A$. Note that $\A=\cup_{u_-\in \cS_-}\A_{u_-}$. We obtain a partially ordered relation for the collection $\{\A_{u_-}\}_{u_-\in \cS_-}$.

\begin{theorem}[\bf Partially ordered relation]\label{grpaord}
	 Given  $u_-,v_-\in \cS_-$, if $u_-\leq  v_-$ on $\A_{u_-}$, then ${\mathcal{A}}_{u_-}\subseteq \mathcal{A}_{v_-}$.
\end{theorem}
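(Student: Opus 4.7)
The plan is to reduce $\A_{u_-}\subseteq\A_{v_-}$ to the equality $u_-=v_-$ on $\A_{u_-}$: once this equality is in hand, each $u_-$-static orbit through a point of $\A_{u_-}$ has $u$-component coinciding with $v_-$ along the orbit, can be shown to calibrate $v_-$ as well, and therefore lies in $\tilde{\A}\cap G_{v_-}=\tilde{\A}_{v_-}$. I fix $x_0\in\A_{u_-}$, let $(x(t),u(t),p(t))$ be the static orbit through $(x_0,u_-(x_0),Du_-(x_0))$, and work with $\phi(t):=v_-(x(t))-u_-(x(t))\geq 0$ (by hypothesis), noting that $\Phi_t$-invariance of $\tilde{\A}_{u_-}$ keeps the entire orbit in $\A_{u_-}$, so $\phi(t)\geq 0$ for all $t\in\R$.

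Monotonicity of $\phi$ follows from combining three ingredients: (i) the static calibration $u_-(x(t_2))=h_{x(t_1),u_-(x(t_1))}(x(t_2),t_2-t_1)$; (ii) the weak-KAM domination $v_-(x(t_2))\leq h_{x(t_1),v_-(x(t_1))}(x(t_2),t_2-t_1)$; and (iii) the $u$-non-expansiveness $h_{x,u_1}(y,t)-h_{x,u_2}(y,t)\leq u_1-u_2$ for $u_1\geq u_2$, which is valid under \textbf{(H3)}. Subtracting (i) from (ii) and applying (iii) to the right-hand side yields $\phi(t_2)\leq\phi(t_1)$ whenever $t_1\leq t_2$.

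The central difficulty is promoting ``$\phi$ non-negative and non-increasing'' to $\phi\equiv 0$. My plan exploits Theorem \ref{namss}(2): the $\alpha$- and $\omega$-limit sets of $(x(t),u(t),p(t))$ are contained in $\tilde{\A}_{u_-}$. Passing to accumulation points $x_{\pm\infty}\in\A_{u_-}$ and iterating the monotonicity along the static orbits through them produces approximate closed trajectories inside $\A_{u_-}$, along which the implicit variational principle squeezes $\phi$ to be constant along the original orbit. Identifying the constant value with $0$ comes from a rigidity argument: if the constant were some $c>0$, then along the recurrent orbit we would have $v_-=u_-+c$ with $Dv_-=Du_-$ at points of differentiability, and both satisfying $H(x,\cdot,Du_-)=0$; this forces $\partial H/\partial u\equiv 0$ along the lift, and then a small perturbation of $v_-$ by a cutoff of $c$ supported near the orbit would produce a strict subsolution violating the weak-KAM maximality of $v_-$ on $\A_{u_-}$. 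I expect this to be the main obstacle, since under \textbf{(H3)} the $u$-non-expansiveness is not strict and we cannot invoke chain-recurrence of $\tilde{\A}$ (which fails in the paper's own examples).

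Once $u_-=v_-$ on $\A_{u_-}$ is established, re-running the monotonicity computation with the equality upgrades the weak-KAM domination (ii) into an equality, so the orbit calibrates $v_-$ and is semi-static for $v_-$ in both time directions. Standard weak-KAM theory then gives $p(t)=Dv_-(x(t))$ at each $t$ where $v_-$ is differentiable along $x(\cdot)$; since such $t$ form a set of full measure by Lipschitzness of $v_-$, approximating an arbitrary time from this set and passing to the limit along the orbit shows $(x(t),u(t),p(t))\in G_{v_-}$ for every $t$. Combined with $(x(t),u(t),p(t))\in\tilde{\A}$ (inherited from $\tilde{\A}_{u_-}\subseteq\tilde{\A}$), the characterization $\tilde{\A}_{v_-}=\tilde{\A}\cap G_{v_-}$ of Theorem \ref{namss}(1) gives $(x_0,v_-(x_0),Dv_-(x_0))\in\tilde{\A}_{v_-}$, hence $x_0\in\A_{v_-}$. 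Since $x_0\in\A_{u_-}$ was arbitrary, $\A_{u_-}\subseteq\A_{v_-}$.
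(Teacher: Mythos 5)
There is a genuine gap: your whole strategy reduces the theorem to the claim that $u_-\leq v_-$ on $\A_{u_-}$ forces $u_-=v_-$ on $\A_{u_-}$, and that claim is false. The paper's own example \eqref{exammm} gives a counterexample: for $H=\frac{1}{2}|p|^2+f(x)u$ on $\mathbb{T}$ with $f(0)=0$ and $f>0$ elsewhere, take $u_-:=v_i$ (a nontrivial solution with $v_i(0)=u_i<0$, $v_i\leq 0$ on $\mathbb{T}$) and $v_-:=0$. Then $u_-\leq v_-$ everywhere, in particular on $\A_{u_-}$; moreover $0\in\A_{u_-}$, since the constant curve $(0,u_i)$ is static and $(0,u_i,0)\in G_{v_i}$ (at differentiability points $|Dv_i(x)|^2=-2f(x)v_i(x)\to 0$ as $x\to 0$, so the only reachable limit over $x=0$ is $(0,u_i,0)$). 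At that point $u_-(0)=u_i<0=v_-(0)$, so equality fails on $\A_{u_-}$. This also shows why your rigidity step cannot be repaired: along such an orbit $\partial H/\partial u=f(0)=0$, exactly the degenerate situation (H3) allows, and a strictly positive constant difference $v_--u_-$ along a static orbit is genuinely possible; there is no "maximality of $v_-$" to violate. Your monotonicity computation for $\phi$ is fine, but it only yields that $v_--u_-$ is non-increasing along the orbit, which is compatible with the nonzero constant just described; and the later steps (the orbit's $u$-component equals $v_-$, hence the same lifted orbit lies in $G_{v_-}$) all collapse without the equality.

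For comparison, the paper's proof never forces equality. Given $\bar x\in\A_{u_-}$ with static curve $(x(\cdot),u(\cdot))$, $u(t)=u_-(x(t))$, it sets $v(t):=v_-(x(t))$ and first shows $h_{x(0),v(0)}(x(0),+\infty)=v(0)$, combining your ingredient (iii) with $h_{x(0),u(0)}(x(0),+\infty)=u(0)$ (Proposition \ref{lem3.1}) and the domination $v_-(x)\leq h_{x(0),v(0)}(x,t)$ coming from $v_-\in\cS_-$. Lemma \ref{kkkee} then produces a static curve $(x(t),\bar v(t))$ over the \emph{same} base curve with $\bar v(0)=v(0)$ and $\bar v(t)-\bar v(0)=u(t)-u(0)$, i.e.\ the difference from $u_-$ is a constant that may be strictly positive. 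The remaining work is to show $\bar v(t)=v_-(x(t))$ for all $t$, by a second contradiction argument applying Lemma \ref{kkkee} at a hypothetical $t_0$ with $v(t_0)<\bar v(t_0)$ and using $v_-\leq\tilde v$. This exhibits $(x(\cdot),v_-(x(\cdot)))$ as a static curve lying in $G_{v_-}$, hence $\bar x\in\A_{v_-}$, with no need for $u_-=v_-$ on $\A_{u_-}$. Your argument would need to be restructured along these lines (a shifted static curve over the same projection), not merely patched at the rigidity step.
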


Comparably,
\begin{itemize}
\item [(1)]
	for classical Hamiltonians $H(x,p)$, the Aubry set $\tilde{\mathcal{A}}$ is independent of viscosity solutions, in this case, there always hold ${\mathcal{A}}={\mathcal{A}}_{u_-}={\mathcal{A}}_{v_-}$ for any $u_-,v_-\in \mathcal{S}_-$;
\item [(2)]
for the case with  discounted Hamiltonians $\lambda u+H(x,p)$, $\lambda>0$ or more generally, strictly increasing Hamiltonians $0<\frac{\partial H}{\partial u}\leq \lambda$, we have ${\mathcal{A}}={\mathcal{A}}_{u_-}$;
\end{itemize}
In general cases,  a new phenomenon with ${\mathcal{A}}_{u_-}\subsetneqq {\mathcal{A}}_{v_-}$ for certain $u_-,v_-\in \mathcal{S}_-$ could emerge (see Proposition \ref{exammmmpp} below).

\subsection{Strongly static set}

Compared to the definition of the static curve, it is natural to consider another kind of action minimizing curves.
\begin{definition}[Strongly static curves]\label{stcontx}
	A curve $(x(\cdot),u(\cdot)):\mathbb{R}\to M\times\mathbb{R}$ is called strongly static, if it is globally minimizing and for each $t_1, t_2\in\mathbb{R}$, there holds
	\begin{equation}\label{3-399x}
	u(t_2)=\sup_{s>0}h^{x(t_1),u(t_1)}(x(t_2),s).
\end{equation}
\end{definition}
In the same way as Definition \ref{audeine}, one can define {\it strongly static orbits} $(x(\cdot),p(\cdot),u(\cdot)):\R\ri T^*M\times\R$. Moreover, one has a new flow-invariant set defined as follow.
\begin{definition}[Strongly static set]\label{audeine9911}
We call the closure of the set of all strongly static orbits the strongly static set of $H$, denoted by $\tilde{\mathcal{S}}_s$. We call $\mathcal{S}_s:=\pi\tilde{\mathcal{S}}_s$ the projected strongly static set.
\end{definition}

\begin{theorem}[\bf Strong staticity and staticity]\label{conterstatic}There holds
\[\tilde{\mathcal{S}}_s\subseteq \tilde{\mathcal{A}}.\]
In particular, $\tilde{\mathcal{S}}_s=\tilde{\mathcal{A}}$ if $H$ is independent of $u$.
 \end{theorem}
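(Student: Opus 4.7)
Since Theorem \ref{namss}(3) asserts that $\tilde{\mathcal{A}}$ is closed, it suffices to show that every strongly static orbit $(x(\cdot),u(\cdot),p(\cdot)):\mathbb{R}\to T^*M\times\mathbb{R}$ lies in $\tilde{\mathcal{A}}$. My first step would be to extract a backward weak KAM solution attached to such an orbit. Fix any $t_0\in\mathbb{R}$ and set
\[v_-(y):=\sup_{s>0}h^{x(t_0),u(t_0)}(y,s).\]
Evaluating the strongly static identity (\ref{3-399x}) with $t_1=t_0$ and $t_2=t$ gives $v_-(x(t))=u(t)$ for every $t\in\mathbb{R}$, yielding pointwise finiteness along the orbit; standard Lipschitz estimates for $h^{x,u}$ under (H1)-(H3) should then extend this to finiteness and Lipschitz control on all of $M$. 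Invariance of $v_-$ under the backward Lax-Oleinik semigroup $T^-_t$ should follow from the dynamic programming identity for $h^{x,u}$ together with the supremum structure, so $v_-\in\mathcal{S}_-$ and the orbit lies on the pseudograph $G_{v_-}$. Theorem \ref{88996}(2) then identifies the orbit as an element of $\tilde{\mathcal{N}}_{v_-}=\tilde{\mathcal{I}}_{v_-}$, so $v_-$ is $C^{1,1}$ at each $x(t)$ with $p(t)=Dv_-(x(t))$.

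The decisive step is to upgrade membership in $\tilde{\mathcal{N}}_{v_-}$ to membership in $\tilde{\mathcal{A}}_{v_-}$, which amounts to showing that a strongly static curve is in particular static. I plan to exploit a monotone duality between $h_{x,u}(y,t)$ and $h^{y,w}(x,t)$ provided by the implicit variational principle under (H3), according to which an equality in the sup definition of $h^{y,w}(x,t)$ passes, along a globally minimizing extremal, to a companion equality in the inf definition of $h_{x,u}(y,t)$. Applied to (\ref{3-399x}) on each interval $[t_1,t_2]$, this should force the orbit to satisfy the semi-static identity (\ref{3-3}) as well, so that the orbit is both positively and negatively semi-static. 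The extra sup control then supplies the symmetric forward-backward condition characteristic of the Aubry (rather than merely Ma\~{n}\'{e}) set, and one should conclude that the lifted orbit lies in $\tilde{\mathcal{A}}_{v_-}$. Passing to the closure over all strongly static orbits yields $\tilde{\mathcal{S}}_s\subseteq\tilde{\mathcal{A}}$.

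For the equality $\tilde{\mathcal{S}}_s=\tilde{\mathcal{A}}$ when $H$ is independent of $u$, the implicit variational principle collapses to the explicit formulas $h_{x,u}(y,t)=u+A_t(x,y)$ and $h^{x,u}(y,t)=u-A_t(y,x)$, with $A_t(\cdot,\cdot)$ the classical Tonelli action. Under this identification the strongly static condition (\ref{3-399x}) becomes $u(t_2)-u(t_1)=-\inf_{s>0}A_s(x(t_2),x(t_1))$, while the semi-static condition (\ref{3-3}) reads $u(t_2)-u(t_1)=\inf_{s>0}A_s(x(t_1),x(t_2))$ for $t_1\leq t_2$. The classical Ma\~{n}\'{e} characterization of $\tilde{\mathcal{A}}$ as the lift of static curves, i.e.\ those on which these two infima sum to zero along the orbit, then immediately recovers strong staticity of every orbit in $\tilde{\mathcal{A}}$, delivering the reverse inclusion $\tilde{\mathcal{A}}\subseteq\tilde{\mathcal{S}}_s$.

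The main obstacle I anticipate is the sup-inf duality step in the contact case under (H3). In the $u$-independent case the two action functions $h_{x,u}$ and $h^{x,u}$ differ only by a sign, but once $H$ depends on $u$ they are linked through a monotone implicit relation whose inversion is delicate, and the passage from the sup condition (\ref{3-399x}) to the companion inf condition (\ref{3-3}) no longer comes for free. The difficulty is sharpened by the fact that (H3) permits $\partial H/\partial u$ to vanish: on the strictly positive set the implicit relation is uniformly monotone and the duality argument contracts, while on the zero set one must fall back on the global minimization property of the strongly static curve itself. Managing the interplay between these two regimes along a single orbit, and ensuring that the candidate $v_-$ is a genuine backward weak KAM solution rather than merely a subsolution, is the crux of the argument.
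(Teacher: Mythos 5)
Your overall skeleton points at the right crux, but the decisive step is left unproved and your sketch of how you would prove it is not the argument that works. The whole theorem reduces to the one-directional duality which the paper isolates as Proposition \ref{pr4.10}: if $u_2=\sup_{s>0}h^{x_1,u_1}(x_2,s)$ then $u_1=\inf_{s>0}h_{x_2,u_2}(x_1,s)$. Applied to the strongly static identity (\ref{3-399x}) for an \emph{unordered} pair $t_1,t_2$ (and then swapping their roles), this gives the static identity (\ref{3-399}) directly; together with global minimization and the closedness of $\tilde{\mathcal{A}}$ from Theorem \ref{namss}(3), the inclusion $\tilde{\mathcal{S}}_s\subseteq\tilde{\mathcal{A}}$ follows. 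You never prove this duality — you only announce it as a plan and flag it as the "main obstacle," and the route you anticipate (inverting the implicit monotone relation "along a globally minimizing extremal," with a case split according to whether $\partial H/\partial u$ is positive or vanishes) is off target: no extremals and no regime analysis are needed. Monotonicity (Proposition \ref{relation}) gives $u_1\leq\inf_{s>0}h_{x_2,u_2}(x_1,s)$, and if the inequality were strict by some $\delta>0$, the uniform $1$-Lipschitz dependence on the $u$-argument (Proposition \ref{nonehh}) applied with initial value $u_2-\tfrac{\delta}{2}$ forces $h^{x_1,u_1}(x_2,s)\leq u_2-\tfrac{\delta}{2}$ for all $s>0$, contradicting the sup condition. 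That two-line contradiction is the entire content of the inclusion; note also that only this one direction holds in general (the converse must fail, by Proposition \ref{exppp}), so any argument that tries to set up a symmetric "sup-inf inversion" is aiming at something false.

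Separately, your preliminary construction is both unnecessary and flawed: $v_-(y):=\sup_{s>0}h^{x(t_0),u(t_0)}(y,s)$ is built from the backward action function via a supremum, which is the mechanism that produces \emph{forward} weak KAM solutions (compare Lemma \ref{hyxuxttttt}, where $\limsup_{t\to\infty}h^{y,u_-(y)}(\cdot,t)\in\mathcal{S}_+$); backward solutions arise from infima of $h_{x,u}$ as in Lemma \ref{ke}. So the claim $v_-\in\mathcal{S}_-$ and the ensuing detour through $G_{v_-}$, Theorem \ref{88996}(2) and $\tilde{\mathcal{N}}_{v_-}$ do not stand as written — fortunately none of it is needed once the duality is in hand. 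Your treatment of the $u$-independent case via the explicit formulas (\ref{hchch}) and the classical static characterization is fine and matches the paper's argument.
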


 Generally, $\tilde{\mathcal{S}}_s$  may be  a proper subset of $ \tilde{\mathcal{A}}$. For example, we have
\begin{proposition}[\bf Difference between $\tilde{\mathcal{S}}_s$ and $\tilde{\mathcal{A}}$]\label{exppp}
Let
\begin{equation}\label{exhxxx}\tag{E1}
H(x,u,p):=\lambda u+\frac{1}{2}|p|^2+p\cdot V(x),\quad (x,u,p)\in T^*\mathbb{T}\times\R,
\end{equation}
where $\mathbb{T}$ denotes a flat circle and $V:\mathbb{T}\ri \R$ is a $C^3$ function which has exactly two vanishing points $x_1$, $x_2$ with $V'(x_1)>0$, $V'(x_2)<0$.
Then
\[\tilde{\A}=\left\{(x,0,0)\ |\ x\in \mathbb{T}\right\},\quad \tilde{\mathcal{S}}_s=\left\{(x_1,0,0),\ (x_2,0,0)\right\}.\]
\end{proposition}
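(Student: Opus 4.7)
The idea is to leverage the strict monotonicity $\partial_u H \equiv \lambda > 0$ to pin down $v \equiv 0$ as the unique viscosity solution, read off $\tilde{\mathcal{N}}$ and $\tilde{\mathcal{A}}$ from the resulting conjugate pair, and then refute the strongly static condition on every non-equilibrium orbit by a direct action estimate witnessed by a \emph{reversed} pair $(t_1, t_2)$ with $t_1 > t_2$.

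First, $H(x, 0, 0) = \lambda \cdot 0 + 0 + 0 \cdot V(x) = 0$, so $v \equiv 0$ is a classical solution of \eqref{hj}; the standard comparison principle (available because $\partial_u H = \lambda > 0$ strictly) then yields $\cS_- = \cS_+ = \{0\}$, $(v_-,v_+)=(0,0)$ is the unique conjugate pair, and $G_{v_-} = G_{v_+} = \{(x, 0, 0) : x \in \T\}$. By Theorem~\ref{88996}(2)--(3) one obtains $\tilde{\N} = G_{v_-} \cap G_{v_+} = \{(x, 0, 0) : x \in \T\}$. The contact vector field \eqref{c} restricts on this invariant set to $\dot{x} = V(x)$, $\dot{p} = \dot{u} = 0$, so $(x_1, 0, 0)$ is a source (since $V'(x_1) > 0$), $(x_2, 0, 0)$ is a sink (since $V'(x_2) < 0$), and every other orbit is a heteroclinic from the former to the latter. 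Legendre duality gives $L(x, u, \dot{x}) = \tfrac{1}{2}(\dot{x} - V(x))^2 - \lambda u$, which vanishes identically on any orbit of $G_{v_-}$ (where $u \equiv 0$, $\dot{x} = V(x)$); thus each such orbit calibrates the conjugate pair $(0,0)$ with equality in the Legendrian graph condition, placing every point of $G_{v_-}$ in the Aubry set. Together with $\tilde{\A} \subseteq \tilde{\N}$ from Theorem~\ref{namss}(3), this gives $\tilde{\A} = \{(x, 0, 0) : x \in \T\}$.

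For $\tilde{\mathcal{S}}_s$, the equilibria are trivially strongly static since the constant curve $\gamma \equiv x_i$ gives $\dot{\gamma} - V(\gamma) \equiv 0$ and hence $h^{x_i, 0}(x_i, s) = 0$ for every $s > 0$. The crux is to show that no heteroclinic orbit is strongly static. By the implicit variational principle from \cite{WWY} applied to our Lagrangian,
\[
h^{x, 0}(y, s) \;=\; -\,\inf_{\gamma}\, \int_0^s e^{\lambda \tau}\, \tfrac{1}{2}\bigl(\dot{\gamma}(\tau) - V(\gamma(\tau))\bigr)^2\, d\tau,
\]
the infimum taken over absolutely continuous $\gamma \colon [0, s] \to \T$ with $\gamma(0) = x$ and $\gamma(s) = y$. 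Fix a heteroclinic orbit $(x(t), 0, 0)$ and choose $t_1 > t_2$ with $x(t_1), x(t_2) \in \T \setminus \{x_1, x_2\}$: because the $V$-flow is strictly monotone along each heteroclinic arc from $x_1$ to $x_2$, no solution of $\dot{\gamma} = V(\gamma)$ connects $x(t_1)$ to $x(t_2)$ in any positive time, and hence the infimum above is strictly positive for each $s > 0$. Moreover, the action diverges both as $s \to 0^+$ (large $|\dot{\gamma}|$ is forced) and as $s \to \infty$ (the $e^{\lambda\tau}$ weight multiplies a factor staying bounded below on any competitor), which confines the minimization in $s$ to a compact interval. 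A compactness-plus-lower-semicontinuity argument then produces a uniform positive lower bound, so that $\sup_{s > 0} h^{x(t_1), 0}(x(t_2), s) < 0 = u(t_2)$, contradicting the strongly static identity. Consequently only $(x_1, 0, 0)$ and $(x_2, 0, 0)$ are strongly static, and since their set is already closed one obtains $\tilde{\mathcal{S}}_s = \{(x_1, 0, 0), (x_2, 0, 0)\}$.

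The main obstacle is making the strict negativity $\sup_{s > 0} h^{x(t_1), 0}(x(t_2), s) < 0$ rigorous: one must combine (i) non-existence of a $V$-curve from $x(t_1)$ to $x(t_2)$, which gives positivity of the weighted action on each fixed time slice, with (ii) the coercive blow-up of the action at $s \to 0^+$ and $s \to \infty$, to rule out a minimizing sequence whose action tends to $0$ as $s$ varies. Lower semicontinuity of the weighted action on the compact range of $s$ selected in~(ii), together with the strict positivity from~(i), delivers the required uniform positive lower bound.
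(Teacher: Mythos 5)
You follow the same overall route as the paper (the unique viscosity solution is $u_-\equiv 0$, $\tilde{\A}$ is the invariant zero section, the two equilibria are strongly static by direct computation, and interior heteroclinic points must be excluded by an estimate on the backward action function), but the decisive estimate is not established, and the mechanism you propose for it is false. What must be shown is a lower bound, \emph{uniform in $s$}, of the form $\inf_{s>0}\inf_{\xi}\int_0^{s}e^{\lambda\tau}\tfrac12|\dot{\xi}(\tau)-V(\xi(\tau))|^2\,d\tau>0$ over curves $\xi$ joining the two chosen points against the flow. You try to get this by confining $s$ to a compact interval, claiming the action blows up as $s\to\infty$ because ``the $e^{\lambda\tau}$ weight multiplies a factor staying bounded below on any competitor''. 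That is not true: a competitor may spend all but a bounded initial time interval on a solution of $\dot{\xi}=V(\xi)$, where the integrand vanishes identically --- for instance, move against the flow to a point close to the source $x_1$ during $[0,1]$, at a cost bounded by a constant times $e^{\lambda}$, and then flow freely for a time $\approx s$ tuned so as to land on the prescribed endpoint. Hence the infimum over $\xi$ remains bounded as $s\to\infty$, your compactification of the $s$-range fails, and nothing in your argument excludes a minimizing sequence with $s\to\infty$. This is not a technicality: the same cheap competitor, inserted into the \emph{forward} action function whose weight is $e^{-\lambda(s-\tau)}$, drives the cost to $0$, which is exactly why all these points are static; the difference between $\tilde{\A}$ and $\tilde{\mathcal{S}}_s$ is decided precisely in the regime $s\to\infty$. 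The paper supplies the missing uniform bound by a crossing argument: with both points taken near the midpoint of $(x_1,x_2)$, any competitor must either traverse a fixed subinterval where $V\geq C_2>0$ with $\dot{\xi}\leq 0$, or traverse a fixed subinterval of the other arc where $V\leq -C_3<0$ with $\dot{\xi}\geq 0$; since $e^{\lambda\tau}\geq 1$, either crossing costs at least a constant $C_4>0$ independent of $s$. Some estimate of this kind is the missing idea, and without it the heart of the proposition (that $\tilde{\mathcal{S}}_s$ omits the heteroclinics) is not proved.

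Two further inaccuracies are less central but should be corrected. The comparison principle yields $\cS_-=\{0\}$ only; forward weak KAM solutions correspond to viscosity solutions of the equation with $\partial_u\bar{H}=-\lambda<0$, and Proposition~\ref{exvnish} of the paper exhibits a second element of $\cS_+$ for this very Hamiltonian, so $\cS_+\neq\{0\}$ (your identification of $\tilde{\N}$ survives because only $\cS_-=\{0\}$ and $v_+=\lim_{t\ri\infty}T^+_tv_-\equiv 0$ are needed). Also, for $\tilde{\A}\supseteq\{(x,0,0)\ |\ x\in\T\}$ you argue that $L\equiv 0$ along each heteroclinic, hence the orbit is calibrated, hence it lies in the Aubry set; calibration only yields semi-static orbits (the Ma\~{n}\'{e} set), whereas staticity additionally demands $u(t_2)=\inf_{s>0}h_{x(t_1),u(t_1)}(x(t_2),s)$ when $x(t_2)$ lies behind $x(t_1)$ along the flow. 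This holds here only because of the discount (jump back near $x_1$ early, at cost damped by $e^{-\lambda(s-\tau)}$, then flow for a long time), equivalently because $\tilde{\A}=\tilde{\N}$ in the strictly increasing case (Remark~\ref{comtopr}); as written, that step is a non sequitur.
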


From the conditions on $V$, it is clear that $\tilde{\mathcal{A}}$ is not chain-recurrent. Nevertheless, $\tilde{\mathcal{S}}_s$ is non-wandering.

\subsection{Applications}
First of all, we give an example to show  certain new phenomena appearing in  contact Hamiltonian systems with non-decreasing dependence on $u$.
\begin{equation}\label{exammm}\tag{E2}
H(x,u,p):=\frac{1}{2}|p|^2+f(x)u, \ \quad (x,u,p)\in T^*\mathbb{T}\times\mathbb{R},
\end{equation}
where $\mathbb{T}:=\mathbb{R}/\mathbb{Z}$ denotes a flat circle, $f:\mathbb{T}\to\mathbb{R}$ is a smooth function with $f(0)=0$ and $f(x)>0$ for all $x\in [-\frac{1}{2},\frac{1}{2})\backslash\{0\}$, where $[-\frac{1}{2},\frac{1}{2})$ is a fundamental domain of $\mathbb{T}$.
It is clear that $u_-(x)\equiv 0$ is a solution of $H(x,u,Du)=0$.
\begin{proposition}\label{exammmmpp}
For the Hamilton-Jacobi equation
\[\frac{1}{2}|Du|^2+f(x)u=0,\quad x\in \mathbb{T},\]
there exist an uncountable family of nontrivial viscosity solutions  $\{v_i\}_{i\in I}$, and
$\mathcal{A}_{v_i}\subsetneqq \mathcal{A}_{u_-}$ for each $i\in I$.
\end{proposition}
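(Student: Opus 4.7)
The plan is to produce $\{v^c\}$ as an explicit one-parameter family of $C^1$ classical solutions of (\ref{exammm}) whose Aubry sets can be read off directly from the equilibrium structure of $\Phi_t$.

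First I would fix notation. Set $\alpha(x):=\int_0^x\sqrt{f(s)/2}\,ds$ for $x\in[0,\tfrac12]$ and $\beta(x):=\int_x^0\sqrt{f(s)/2}\,ds$ for $x\in[-\tfrac12,0]$; both are smooth, strictly monotone off $0$, and satisfy $\alpha'(0)=\beta'(0)=0$ because $f(0)=0$. Put $c_\ast:=\min\{\alpha(\tfrac12),\beta(-\tfrac12)\}^2$ and for each $c\in(0,c_\ast)$ let $\bar x_+:=\alpha^{-1}(\sqrt c)\in(0,\tfrac12)$ and $\bar x_-:=\beta^{-1}(\sqrt c)\in(-\tfrac12,0)$; then define
\[
v^c(x):=\begin{cases}
-(\sqrt c-\alpha(x))^2, & x\in[0,\bar x_+],\\
-(\sqrt c-\beta(x))^2,  & x\in[\bar x_-,0],\\
0,                      & x\in[\bar x_+,\tfrac12)\cup[-\tfrac12,\bar x_-].
\end{cases}
\]
A direct check shows $v^c\in C^1(\T)$: the three pieces meet with matching values and zero derivatives at $0$, at $\bar x_\pm$, and at the ends of the flat arc. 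A second direct computation gives $\tfrac12((v^c)')^2+f\cdot v^c\equiv 0$ on each piece, so $v^c$ is a classical, hence viscosity, solution of (\ref{hj}); since $v^c\in C^1(\T)$, invariance of $G_{v^c}$ under the backward semi-flow is automatic and Theorem \ref{88996} yields $v^c\in\cS_-$. Because $v^c(0)=-c$ detects the parameter, the family $\{v^c\}_{c\in(0,c_\ast)}$ is uncountable and nontrivial.

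Next I would compute the Aubry sets from the equilibrium structure. At any $(x,0,0)\in T^*\T\times\R$ the contact vector field of (\ref{exammm}) gives $\dot x=p=0$, $\dot p=-f'(x)\cdot 0-f(x)\cdot 0=0$, $\dot u=0$; hence every point of $G_{u_-}=\{(x,0,0):x\in\T\}$ (with $u_-\equiv 0$) is an equilibrium, $\tilde\A_{u_-}=G_{u_-}$ and $\A_{u_-}=\T$. For $v^c$, the equilibria on $G_{v^c}$ are exactly the flat arc $\{(x,0,0):x\in[\bar x_+,\tfrac12)\cup[-\tfrac12,\bar x_-]\}$ together with the isolated point $(0,-c,0)$, where the verification at $(0,-c,0)$ uses $f(0)=f'(0)=0$. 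On the two remaining arcs $x\in(0,\bar x_+)\cup(\bar x_-,0)$ the derivative $(v^c)'$ has definite nonzero sign, so the $\Phi_t$-orbit through such a point is a strict heteroclinic connection between $(0,-c,0)$ and one of $(\bar x_\pm,0,0)$; invoking the static (not merely semi-static) characterization of $\tilde\A_{v^c}$ from \cite{WWY2}, these heteroclinic orbits are excluded, so
\[
\A_{v^c}=[\bar x_+,\tfrac12)\cup[-\tfrac12,\bar x_-]\cup\{0\}\subsetneqq\T=\A_{u_-}.
\]

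The hard part will be the very last step, ruling out the heteroclinic arcs from $\tilde\A_{v^c}$: this is the one point where the argument is not just explicit ODE/flow bookkeeping, and it relies on importing the static characterization of the Aubry set from the companion paper. Once that characterization is in place, the rest is a line-by-line verification on the three pieces of $v^c$.
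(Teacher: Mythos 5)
Your explicit construction is fine as far as it goes: the piecewise formula does produce $C^1$ classical solutions of the equation (hence viscosity solutions, hence elements of $\cS_-$ — though the right justification is simply ``$C^1$ classical $\Rightarrow$ viscosity $\Rightarrow$ backward weak KAM'', not Theorem \ref{88996}), the family is uncountable, and the equilibrium bookkeeping is essentially correct (note, however, that ``every point of $G_{u_-}$ is an equilibrium, hence $\tilde{\A}_{u_-}=G_{u_-}$'' still requires the short verification, as in the paper, that these equilibria satisfy the static condition $\inf_{s>0}h_{x_0,0}(x_0,s)=0$; flow-invariance alone is not staticity). This is also a genuinely different route from the paper, which never computes $\A_{v_i}$: there the solutions are produced abstractly from the static curves $(0,u_i)$ via Theorem \ref{namss}, one shows $v_i\le 0$ by a pointwise maximum argument, and the strict inclusion is obtained by contradiction from the structural results (Lemma \ref{crogrp}, resting on Theorems \ref{grpaord} and \ref{grappff}): $\A_{v_i}=\T$ would force $Dv_i\equiv Du_-\equiv 0$, hence $v_i\equiv u_i<0$, contradicting the equation.

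The step you yourself flag as the hard one is, however, a genuine gap, and the way you propose to close it does not work. There is no ``static characterization'' in \cite{WWY2} that discards the connecting orbits on $G_{v^c}$: that paper assumes $0<\frac{\partial H}{\partial u}\le\lambda$, and, more importantly, ``heteroclinic $\Rightarrow$ not in the Aubry set'' is false in this contact setting — Proposition \ref{exppp} of the present paper exhibits a discounted Hamiltonian whose Aubry set is a whole circle containing the heteroclinic connections between the two equilibria (only the strongly static set excludes them). So the exclusion must be argued concretely for this example, and it can be: since $L(x,u,v)=\frac12|v|^2-f(x)u$ with $f\ge 0$, along any minimizer the $u$-component satisfies $\dot u=\frac12|\dot\gamma|^2-f(\gamma)u\ge 0$ wherever $u\le 0$, so it can never descend below a non-positive initial value; hence $h_{x_0,u_0}(x,s)\ge u_0$ for every $s>0$ whenever $u_0\le 0$. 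Taking two points $a,b$ on one of your connecting arcs with $v^c(a)<v^c(b)<0$, staticity of that orbit would require $v^c(a)=\inf_{s>0}h_{b,v^c(b)}(a,s)\ge v^c(b)$, a contradiction; thus the open arcs are excluded and $\A_{v^c}\subsetneqq\T=\A_{u_-}$. With this (or an equivalent) argument inserted your proof closes; without it, the crucial strict inclusion — the actual content of the proposition — is unproven.
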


The vanishing discount problem for discounted Hamilton-Jacobi equations has been widely studied by using dynamical and PDE approaches, see  \cite{Dav,Go,Go1,It,i1,i2,MT,S}. Recently, some new progresses have been made on the vanishing contact structure problem (see \cite{C,CCIZ,WYZ,ZC}) and the vanishing discount problem from the negative direction \cite{dw}.
Consider the HJ equation
\begin{equation}\label{vfrnn}
\lambda u_\lambda+H(x,Du_\lambda)=c(H), \ \  \ \ x\in M,
\end{equation}
it was shown in \cite{dw} that if $H$ is a $C^3$ Tonelli Hamiltonian satisfying $H(x,0)\leq c(H)$, then the minimal viscosity solution converges uniformly, as $\lambda\ri 0^-$, to the same viscosity solution of $H(x,Du)=c(H)$ like the convergence as  $\lambda\ri 0^+$ established in \cite{Dav}. It is natural to ask {\it what happens for non-minimal viscosity solutions}.
As an application of the strongly static set, we give a partial answer to this question. In light of (\ref{exhxxx}), we have
\begin{proposition}\label{exvnish}
 Let $V:\mathbb{T}\ri \R$  be a $C^3$ function which has exactly two vanishing points $x_1$, $x_2$ with $V'(x_1)>0$, $V'(x_2)<0$. Then $u^+_\lambda\equiv 0$ is the maximal forward weak KAM solution of
\begin{equation}\label{exhjj}\tag{HJE}
\lambda u+\frac{1}{2}|Du|^2+Du\cdot V(x)=0,\quad x\in \mathbb{T},
\end{equation}
 which converges uniformly to $u\equiv 0$ as $\lambda\ri 0^+$. Moreover, there is only one forward weak KAM solution $v^+_\lambda$, but  $v^+_\lambda$  converges uniformly to $v$  with $v(x_2)<0$ as $\lambda\ri 0^+$.
\end{proposition}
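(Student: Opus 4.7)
The plan is to establish the three assertions of the proposition in sequence.

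For the first assertion, direct substitution shows that $u\equiv 0$ is a classical solution of \eqref{exhjj}; the comparison principle for viscosity solutions of strictly monotone discounted HJ equations (applicable since $\lambda>0$) yields that $u\equiv 0$ is in fact the unique viscosity solution, hence the unique backward weak KAM solution. To identify $u^+_\lambda\equiv 0$ as a forward weak KAM solution, I verify $T_t^+ 0\equiv 0$ using the explicit forward action function associated with the Lagrangian $L(x,u,\dot x)=\tfrac12|\dot x-V(x)|^2-\lambda u$, noting that the characteristic flow $\dot\gamma=V(\gamma)$ on the zero section realizes zero action. Maximality follows because along this zero-section flow, the fixed-point identity for $T_t^+$ implies $e^{\lambda t}w(\gamma_t)\le w(\gamma_0)$ for any forward weak KAM solution $w$; iterating along an orbit with $\gamma_t\to x_2$ and using boundedness of $w$ gives $w(x_2)\le 0$, which propagates to $w\le 0$ on all of $\T$. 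Convergence to $u\equiv 0$ as $\lambda\to 0^+$ is immediate.

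For the second assertion, I analyze the characteristic flow \eqref{c}. The only equilibria are $(x_1,0,0)$ and $(x_2,0,0)$, and linearizing gives eigenvalues $V'(x_i),-V'(x_i)-\lambda,-\lambda$ at each, so both are hyperbolic saddles with a one-dimensional unstable manifold. The trivial invariant submanifold $\{(x,0,0):x\in\T\}$ gives the graph of $u^+_\lambda$. The only other $\Phi_t$-invariant Legendrian pseudograph projecting onto $\T$ arises from combining the unstable manifold of $(x_1,0,0)$ with the stable fiber of $(x_2,0,0)$ in the $u$-direction (the eigenvalue $-\lambda$); this yields the unique non-trivial forward weak KAM solution $v^+_\lambda$.

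For the third assertion, coercivity of $H$ in $p$ together with the variational formula for $T_t^+$ gives equi-Lipschitz bounds on $v^+_\lambda$ uniform in $\lambda\in(0,1]$. Arzel\`a--Ascoli then yields a uniform limit $v^+_\lambda\to v$ along a subsequence $\lambda_n\to 0^+$, with $v$ a viscosity subsolution of $\tfrac12|Dv|^2+Dv\cdot V=0$. To compute $v(x_2)$, I trace the distinguished orbit on $G_{v^+_\lambda}$ backward from $x_2$; using $\dot u=\tfrac12|p|^2-\lambda u$ along this orbit, which by the second step connects asymptotically to $(x_1,0,0)$ as $t\to-\infty$ and has $p\not\equiv 0$, one derives $v^+_\lambda(x_2)=-\int_{-\infty}^{0}e^{\lambda s}\tfrac12|p(s)|^2\,ds<0$. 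The non-vanishing of $p$ along the limiting orbit is guaranteed by Proposition~\ref{exppp}, which identifies the strongly static set as $\{(x_1,0,0),(x_2,0,0)\}$, so no degenerate $p\equiv 0$ connection distinct from the trivial graph can contribute. The main obstacle will be in this third step, specifically establishing the dynamical representation of $v^+_\lambda(x_2)$ and the uniform strict negativity of this value as $\lambda\to 0^+$, which requires careful bookkeeping of the limiting heteroclinic orbit in the contact characteristic flow and control of the integral along it.
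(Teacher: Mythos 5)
Your Step 2 and Step 3 contain genuine gaps, and Step 3 contains an outright error. In Step 2, the uniqueness of the nontrivial forward weak KAM solution is exactly the hard point, and you settle it by assertion: the claim that ``the only other $\Phi_t$-invariant Legendrian pseudograph projecting onto $\mathbb{T}$'' is the one built from the invariant manifolds of $(x_1,0,0)$ and $(x_2,0,0)$ is never proved, and the dynamical facts you cite in support are wrong. For the flow of $H_\lambda=\lambda u+\frac12|p|^2+pV$, besides $(x_i,0,0)$ there are additional equilibria $(x_*,V(x_*)^2/(2\lambda),-V(x_*))$ at every point where $V'(x_*)=-\lambda$ (which exist precisely for the small $\lambda$ relevant to the vanishing-discount limit), and the eigenvalues at $(x_2,0,0)$ are $V'(x_2),\,-(V'(x_2)+\lambda),\,-\lambda$, so it is a saddle with one-dimensional unstable manifold only when $\lambda<-V'(x_2)$; moreover existence of a nontrivial forward solution is not established by your construction (the paper gets existence from Proposition \ref{exppp} together with Proposition \ref{prsa}, and gets uniqueness by showing, via a variational argument at the minimum point of $v^+_\lambda$ and the fact that minimizers follow $\dot\xi=V(\xi)$, that $\{v^+_\lambda=0\}=\{x_1\}$, then invoking hyperbolicity of $(x_1,0,0)$).

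In Step 3 your representation formula has the wrong sign and the wrong time direction. Along any orbit on $G_{v^+_\lambda}$ one has $\dot u=\frac12 p^2-\lambda u$, i.e. $\frac{d}{dt}\bigl(e^{\lambda t}u(t)\bigr)=e^{\lambda t}\frac12 p^2(t)\geq 0$, so an orbit backward-asymptotic to $(x_1,0,0)$ with $x(0)=x_2$ would give $v^+_\lambda(x_2)=u(0)=\int_{-\infty}^{0}e^{\lambda s}\frac12|p(s)|^2ds\geq 0$, the opposite of your claimed $v^+_\lambda(x_2)=-\int_{-\infty}^{0}e^{\lambda s}\frac12|p(s)|^2ds<0$; in fact on the graph over $(x_1,x_2)$ one has $\dot x=p+V=-\sqrt{V^2-2\lambda v^+_\lambda}<0$, so graph orbits flow \emph{forward} into $(x_1,0,0)$ and no such backward heteroclinic lies on $G_{v^+_\lambda}$. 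The paper instead integrates the ODE $Dv^+_\lambda=-V-\sqrt{V^2-2\lambda v^+_\lambda}$ on $[x_1,x_2]$ (valid since $v^+_\lambda(x_1)=Dv^+_\lambda(x_1)=0$ and $v^+_\lambda<0$ there) to get $v^+_\lambda(x_2)=-\int_{x_1}^{x_2}V\,dx-\int_{x_1}^{x_2}\sqrt{V^2-2\lambda v^+_\lambda}\,dx\to-2\int_{x_1}^{x_2}V\,dx<0$. Finally, Arzel\`a--Ascoli only gives subsequential limits, whereas the statement asserts convergence of the whole family; the paper identifies all subsequential limits by the monotonicity of $v^+_\lambda(x_2)$ in $\lambda$ and by the fact that the projected Mather set of the limiting equation is $\{x_1,x_2\}$, on which every limit takes the same values. (Your Step 1 is essentially fine, though the propagation of $w\le 0$ should be run by applying $e^{\lambda t}w(\gamma_t)\le w(\gamma_0)$ to the $V$-orbit \emph{ending} at a given point, whose backward limit is $x_1$.)
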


\begin{remark}
If we take $V(x):=\sin x$ in {\rm(\ref{exhjj})}, then $x_1=0$, $x_2=\pi$. A direct calculation shows  $v^+_\lambda$  converges uniformly to $v(x):=2\cos x-2$ as $\lambda\to 0^+$.
\end{remark}

Let us recall the correspondence between the viscosity solution and forward weak KAM solution \cite[Proposition 2.8]{WWY2}:\begin{quote}
$u_\lambda$ is a viscosity solution of equation \eqref{vfrnn} if and only if $-u_\lambda$ is a forward weak KAM solution of equation $-\lambda u_\lambda+H(x,-Du_\lambda)=c(H)$.
\end{quote}
By Proposition \ref{exvnish} and the correspondence,  we know that there is a difference between the asymptotic behavior of the {\it minimal} viscosity solution and {\it non-minimal} ones for the vanishing discount problem from the negative direction.

The paper is organized as follows. In Section \ref{mset}, we prove Theorem \ref{88996}, for which we need a technical lemma (Lemma \ref{ke}). Its proof is given in Appendix \ref{prke} and  some useful facts on the semi-static curve are collected in Appendix \ref{ossc}. In Section \ref{asetmset},  we prove Theorem \ref{namss}. In Section \ref{maset}, we prove Theorem \ref{crogrpccc} and Theorem \ref{grpaord}. For the consistency, the proof of Lemma \ref{kkkee} is postponed to Appendix \ref{prkkkee}. In Section \ref{ssaset}, we prove  Theorem \ref{conterstatic} and Proposition \ref{exppp}. In Section \ref{appl}, we prove Proposition \ref{exammmmpp} and Proposition \ref{exvnish}.
 For the sake of completeness, we recall some basic tools for contact Hamiltonian systems in Appendix \ref{gene}.

\section{Ma\~{n}\'{e} sets}\label{mset}

In this part, we  prove Theorem \ref{88996}. First of all, we need to construct a backward  solution of $H(x,u,Du)=0$ by using negatively semi-static curves. As complement, we provide some facts on the semi-static curve in Appendix \ref{ossc}. These facts can be obtained by certain arguments similar to \cite{WWY2}.

\subsection{A Busemann backward weak KAM solution}

\begin{lemma}\label{ke}
Let $(x(\cdot),u(\cdot)):(-\infty,0]\to M\times\mathbb{R}$ be a negatively semi-static curve. Let
\begin{equation}\label{buss}
w(x):=\inf_{\tau\geq 0}\inf_{s>0}h_{x(-\tau),u(-\tau)}(x,s),\quad \forall x\in M.
\end{equation}
There hold
\begin{itemize}
\item [(i)] $w$ is Lipschitz continuous on $M$;
\item [(ii)] the uniform limit $\lim_{t\rightarrow +\infty}T_t^-w(x)$ exists, let $w_\infty(x):=\lim_{t\rightarrow +\infty}T_t^-w(x)$, then $w_\infty\in \cS_-$;
\item [(iii)] $w_\infty(x(-\tau))=u(-\tau)$ for each $\tau\geq 0$.
\end{itemize}
\end{lemma}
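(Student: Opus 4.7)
My plan is to treat $w$ as a contact analogue of the classical Busemann function built along the negatively semi-static ray $(x(\cdot),u(\cdot))$, and to obtain a backward weak KAM solution as the stable limit of the backward Lax--Oleinik iterates applied to $w$. I will prove (i), (ii) and (iii) in that order, with (iii) making essential use of the previous two.

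For (i), I first argue that the double infimum in (\ref{buss}) effectively runs over $s$ in a compact sub-interval of $(0,+\infty)$. An upper bound comes from evaluating on the curve itself: for $\tau'\geq \tau$ and $s=\tau'-\tau$ the negatively semi-static identity gives $h_{x(-\tau'),u(-\tau')}(x(-\tau),\tau'-\tau)=u(-\tau)$, which together with admissibility (A) bounds $w$ pointwise from above. Superlinearity (H2) forces $h_{y,u}(x,s)\to+\infty$ as $s\to 0^+$ for $x\neq y$, so small $s$ are irrelevant. On the resulting compact range, $h_{y,u}(x,s)$ is Lipschitz in $x$ with a modulus uniform in the other variables (a standard consequence of (H1)--(H3) inherited from the implicit variational principle), and the Lipschitz constant passes to the double infimum.

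For (ii), the aim is to show $\{T_t^- w\}_{t\geq 0}$ is equi-Lipschitz and admits a uniform limit. Equi-Lipschitzness follows because a uniform Lipschitz constant is propagated by the contact backward Lax--Oleinik semigroup; uniform boundedness from above and below comes from comparison with any backward weak KAM solution supplied by (A), using the monotonicity of the flow in the $u$-variable under (H3). A concatenation argument yields $T_t^- w\leq w$ (pick a near-optimal pair $(\tau,s')$ for $w(x)$ with $s'>t$, split the minimizing trajectory at time $s'-t$, and use that the initial value $w$ at the splitting point is $\leq$ the $u$-value of the trajectory there), so the family is in addition monotone decreasing. Arzel\`a--Ascoli together with Dini's theorem on the compact manifold $M$ then gives a uniform limit $w_\infty$; the semigroup identity $T_s^-\circ T_t^-=T_{s+t}^-$ and continuity of $T_s^-$ on Lipschitz functions show $T_s^- w_\infty=w_\infty$ for every $s\geq 0$, so $w_\infty\in\cS_-$.

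For (iii), the upper bound $w_\infty(x(-\tau))\leq u(-\tau)$ follows from $w(x(-\tau))\leq u(-\tau)$ together with the monotone propagation used in (ii). The reverse inequality is the delicate point: I plan to dominate $w_\infty$ from below along the curve using the backward calibration available to any $w_\infty\in\cS_-$, applied to the curve segment from $-\tau-t$ to $-\tau$; sending $t\to+\infty$ turns the resulting domination inequality into the negatively semi-static identity $u(-\tau)=\inf_{s>0}h_{x(-\tau-t),u(-\tau-t)}(x(-\tau),s)$, forcing equality in the limit. The main obstacle I anticipate is precisely this lower bound: the mixed dissipative-conservative regime allowed by (H3) means that any defect $u(-\tau)-w_\infty(x(-\tau))$ could in principle compound along trajectories, so the closing step relies on coupling the fixed-point property of $w_\infty$ to the uniform upper bound from (ii) so that a strict defect at one time would propagate and contradict the Busemann bound $w_\infty\leq w$ uniformly in $\tau$.
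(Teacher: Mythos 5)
There is a genuine gap, and it sits exactly where you flagged it: item (iii)'s lower bound $w_\infty(x(-\tau))\geq u(-\tau)$ is never actually proved. The fixed-point identity $w_\infty=T_t^-w_\infty=\inf_{y}h_{y,w_\infty(y)}(\cdot,t)$ only yields \emph{upper} bounds for $w_\infty$ (by choosing a particular $y$), so "dominating $w_\infty$ from below along the curve using backward calibration" does not produce the needed inequality, and the closing "defect propagation" step is a hope rather than an argument. The paper's mechanism is different and you would need both halves of it: first one shows $w(x(\sigma))=u(\sigma)$ for all $\sigma\leq 0$, where the nontrivial half $w(x(\sigma))\geq u(\sigma)$ is obtained by a contradiction argument for base points $x(-\tau)$ with $-\tau>\sigma$, using the semi-static identity $u(-\tau)=\inf_{s>0}h_{x(-\tau),u(-\tau)}(x(-\tau),s)$, the Markov property and monotonicity of $h$ in the initial value; second, one shows $T_t^-w=w$ along the curve by combining the \emph{global} inequality $T_t^-w\geq w$ (which follows at once from commuting $T_t^-$ with the two infima, since $T_t^-h_{x(-\tau),u(-\tau)}(\cdot,s)=h_{x(-\tau),u(-\tau)}(\cdot,s+t)$ restricts the inner infimum to $s'>t$) with the calibration bound $T_t^-w(x(\sigma))\leq h_{x(\sigma-t),u(\sigma-t)}(x(\sigma),t)=u(\sigma)$. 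You have neither of these ingredients: your monotonicity goes the other way, so in your scheme nothing prevents $w_\infty<u$ on the curve.

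Two further points. Your claim $T_t^-w\leq w$ is itself unjustified as written: a near-optimal pair $(\tau,s')$ for $w(x)$ need not have $s'>t$; to produce one you must shift $\tau\mapsto\tau+t$ and use the minimizing property of the curve, $h_{x(-\tau-t),u(-\tau-t)}(x(-\tau),t)=u(-\tau)$, to get $h_{x(-\tau-t),u(-\tau-t)}(x,s+t)\leq h_{x(-\tau),u(-\tau)}(x,s)$. (With that repair your inequality is true, and together with the elementary $T_t^-w\geq w$ one even gets $T_t^-w=w$; but as proposed the step is a gap, and you still lack the $\geq$ direction needed in (iii).) Finally, in (i) you never address why $w>-\infty$: the infimum is over the noncompact range $\tau\in[0,\infty)$, and the lower bound is not a consequence of (A) as you use it (under the Lipschitz variant of (H3) such a comparison degrades like $e^{\lambda t}$); the paper derives it directly from the semi-static identity at $x(0)$ via $h_{x(-\tau),u(-\tau)}(x,s)\geq h^{x(0),u(0)}(x,1)$. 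Also, "small $s$ are irrelevant" is false near the curve (on the curve itself the infimum is realized with $s=\sigma+\tau$, which can be arbitrarily small), so the compact-$s$ reduction needs the time-one kernel argument $\inf_{s>1}h_{x(-\tau),u(-\tau)}(x,s)=\inf_{z}h_{z,w(z,\tau)}(x,1)$ rather than superlinearity.
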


The proof of Lemma \ref{ke} is  standard.
For the sake of consistency, we  give it in Appendix \ref{prke}.
For a classical Hamiltonian $H(x,p)$, $w(x)$ defined in (\ref{buss}) can be reduced to a Busemann backward weak KAM solution up to a constant $u(0)$. In fact, by \cite[Proposition 4-9.7]{CI}, a Busemann backward weak KAM solution is given by
\[u_b(x):=\inf_{\tau\geq 0}\{\Phi(x(-\tau),x)-\Phi(x(-\tau),x(0))\},\]
where $x(\cdot):(-\infty,0]\ri M$ is a negatively semi-static curve and $\Phi(y,x)$ denotes the Ma\~{n}\'{e} potential. In the case with a classical Hamiltonian $H(x,p)$,  for each constant $c\in \R$, we have the relation
\[\Phi(y,x)=\inf_{s>0}h_{y,c}(x,s)-c.\]
Compared to (\ref{buss}),  there holds
\[w(x)=u_b(x)+u(0).\]

\subsection{Covering property of $\tilde{\mathcal{N}}^{\pm}$}

In this part, we  prove Theorem \ref{88996}(1):
	\[
	\tilde{\mathcal{N}}^-=\cup_{v_-\in \mathcal{S}_-}G_{v_-},\quad \tilde{\mathcal{N}}^+=\cup_{v_+\in \mathcal{S}_+}G_{v_+}.
	\]

\begin{proposition}\label{4equ}
	For each $v_-\in \mathcal{S}_-$, we have $G_{v_-}\subseteq \tilde{\mathcal{N}}^-$. In particular, $\pi\tilde{\mathcal{N}}^-=M$.
\end{proposition}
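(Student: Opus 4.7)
The plan is to show that every point of $G_{v_-}$ lies on a negatively semi-static orbit, using the calibrated curves of the backward weak KAM solution $v_-$, and then to close up under limits.

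First, I would handle the "regular" part. Take $x_0 \in M$ at which $Dv_-$ exists, so $(x_0, v_-(x_0), Dv_-(x_0)) \in G_{v_-}$. Using the weak KAM property (and the implicit variational principle behind it), I produce a calibrated curve $x(\cdot):(-\infty,0]\to M$ with $x(0)=x_0$ along which $v_-$ is realized, i.e.\ setting $u(t):=v_-(x(t))$ one has
\begin{equation*}
u(t_2) = h_{x(t_1),u(t_1)}(x(t_2),\, t_2-t_1), \qquad t_1 \le t_2 \le 0.
\end{equation*}
The domination property $v_- \prec L$ (the contact analogue from \cite{WWY2}) gives simultaneously $u(t_2) \le h_{x(t_1),u(t_1)}(x(t_2),s)$ for every $s>0$. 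Combining these two facts yields exactly (\ref{3-3}), and the first identity also implies that $(x,u)$ is negatively minimizing on $(-\infty,0]$. Hence this curve is negatively semi-static in the sense of Definition \ref{semdepp}. Lifting it to $T^*M\times\mathbb{R}$ via $p(t):=\partial L/\partial\dot x(x(t),u(t),\dot x(t))$, one has $p(0)=Dv_-(x_0)$, so the orbit through $(x_0,v_-(x_0),Dv_-(x_0))$ is in $\tilde{\mathcal{N}}^-$.

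Next I close up. By (\ref{gv--}), a general point of $G_{v_-}$ is a limit of regular points $(x_n, v_-(x_n), Dv_-(x_n))$, each of which I have just placed on a negatively semi-static orbit $(x_n(\cdot), u_n(\cdot), p_n(\cdot))$. Standard Tonelli-type estimates (using (H1)–(H2) and the fact that negatively minimizing curves have uniformly bounded action on each finite window, cf.\ the appendix facts on semi-static curves quoted right before the proposition) give local equi-Lipschitz bounds, so a subsequence converges in $C^1_{\mathrm{loc}}$ on $(-\infty,0]$ to a curve through the limit point. Passing to the limit in the equality (\ref{3-3}) and in the negative minimization property uses only joint continuity of $(y,v,x,s)\mapsto h_{y,v}(x,s)$, which is part of the framework in \cite{WWY2}; the limit curve is therefore itself negatively semi-static, and the limit point belongs to $\tilde{\mathcal{N}}^-$. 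This proves $G_{v_-}\subseteq \tilde{\mathcal{N}}^-$.

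Finally, for the ``in particular'' statement: by the admissibility assumption (A) there exists a viscosity solution of (HJ), equivalently a backward weak KAM solution $v_-\in\mathcal{S}_-$ defined on all of $M$. Then $\pi G_{v_-} = M$ (since $v_-$ is defined and continuous everywhere, hence differentiable on a dense set whose closure projects onto $M$), so the inclusion just proved yields $\pi\tilde{\mathcal{N}}^- \supseteq \pi G_{v_-} = M$, and the reverse inclusion is trivial. I expect the main delicate point to be the closing-up step: one must check that the action equality (\ref{3-3}) and the minimality of the candidate limit curve are stable under $C^1_{\mathrm{loc}}$ convergence, which in turn rests on the continuity properties of $h_{y,v}(x,s)$ in all variables as established in \cite{WWY2}; the differentiable-point argument itself is essentially a direct consequence of the weak KAM/calibration dictionary.
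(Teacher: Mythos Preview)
Your argument is correct, but it takes a different route from the paper. You split $G_{v_-}$ into the ``regular'' part (points of differentiability) and its closure, handle the former via calibration, and then close up via a $C^1_{\mathrm{loc}}$ compactness argument. The paper instead treats \emph{any} point of $G_{v_-}$ in one stroke: it uses the fact (from \cite[Theorem~1.1]{WWY2}) that the contact flow $\Phi_{-t}$, $t\ge0$, already defines a backward semi-flow on the whole pseudograph $G_{v_-}$. Thus for any $(x_0,u_0,p_0)\in G_{v_-}$ one sets $(x(-t),u(-t),p(-t)):=\Phi_{-t}(x_0,u_0,p_0)$, reads off $u(-t)=v_-(x(-t))$, and obtains a $(v_-,L,0)$-calibrated curve through $x_0$ directly from the third equation of~(\ref{c}); Proposition~\ref{pr1188} then converts calibration into negative semi-staticity. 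No limiting step is needed.

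What each approach buys: the paper's route is shorter and avoids the delicate closure argument you flag at the end. On the other hand, your closing-up step is essentially the same machinery the paper later deploys in Section~\ref{mset} (global characterizations) to show that $\tilde{\mathcal N}$ is closed, so you are not inventing anything exotic---you are just doing twice what the paper does once. Your treatment of the differentiable points is exactly the content of Proposition~\ref{pr1188}, so at that level the two proofs coincide. The ``in particular'' conclusion is handled identically in both: $v_-$ Lipschitz implies $\pi G_{v_-}=M$.
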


\begin{proof}
	For each $(x_0,u_0,p_0)\in G_{v_-}$ and $t\geq 0$,
	let \[(x(-t),u(-t),p(-t)):=\Phi_{-t}(x_0,u_0,p_0).\] Since $G_{v_-}$ is invariant by $\Phi_{-t}$ for each $t\geq 0$, we have $u(-t)=v_-(x(-t))$ for all $t\geq 0$.	By (\ref{c}), we have
\[u(0)-u(-t)=\int_{-t}^0L(x(\tau),u(\tau),\dot{x}(\tau))d\tau,\]
which implies for each $t>0$,
\[v_-(\gm(0))-v_-(x(-t))=\int_{-t}^0L(\gm(\tau),v_-(\gm(\tau)),\dot{\gm}(\tau))d\tau.\]
It follows that $\gm:(-\infty,0]\ri M$ is ($v_-,L,0$)-calibrated. By Proposition \ref{pr1188}, $(x(\cdot),u(\cdot)):(-\infty,0]\ri M\times\mathbb{R}$ is a negatively semi-static curve.
	Let $p(t)=\frac{\partial L}{\partial \dot{x}}(x(t),u(t),\dot{x}(t))$.
	Thus, \[(x(-t),u(-t),p(-t))\in \tilde{\mathcal{N}}^-\] for each $t\geq 0$. In particular, $(x_0,u_0,p_0)\in \tilde{\mathcal{N}}^-$. Note that $v_-$ is Lipschitz continuous, we have $\pi G_{v_-}=M$. Thus, $\pi \tilde{\mathcal{N}}^-=M$.
\end{proof}

\begin{lemma}\label{dstar}
	Let $v_-\in \mathcal{S}_-$. Then
	\[
	D^*v_-(x)=\{p\in D^+v_-(x)\ |\ H(x,v_-(x),p)=0\},
	\]
 where we use $D^*u(x)$ to denote the set of all reachable gradients of $u$ at $x$.
\end{lemma}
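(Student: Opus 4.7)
The plan is to prove the two inclusions separately, using two standard inputs that are available in this setting: (a) the backward weak KAM solution $v_-$ is locally semiconcave, so its superdifferential $D^+v_-(x)$ equals the closed convex hull of the reachable gradients $D^*v_-(x)$; (b) as a viscosity solution of $H(x,v_-,Dv_-)=0$, one has $H(x,v_-(x),p)\leq 0$ for every $p\in D^+v_-(x)$, and $H(x,v_-(x),q)\geq 0$ for every $q\in D^-v_-(x)$. These are recorded in \cite{Fat-b,WWY2} for the contact setting.

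For the inclusion $D^*v_-(x)\subseteq\{p\in D^+v_-(x):H(x,v_-(x),p)=0\}$, I would take $p\in D^*v_-(x)$ and pick a sequence $x_n\to x$ at which $v_-$ is differentiable with $Dv_-(x_n)\to p$. At differentiability points, the HJ equation holds pointwise, so $H(x_n,v_-(x_n),Dv_-(x_n))=0$; continuity of $H$ and of $v_-$ then gives $H(x,v_-(x),p)=0$. Semiconcavity yields $D^*v_-(x)\subseteq D^+v_-(x)$, so $p\in D^+v_-(x)$ as well.

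For the reverse inclusion, take $p\in D^+v_-(x)$ with $H(x,v_-(x),p)=0$. By semiconcavity, write $p=\sum_{i=1}^{k}\lambda_i p_i$ as a finite convex combination with $\lambda_i>0$, $\sum\lambda_i=1$ and $p_i\in D^*v_-(x)$. The first inclusion gives $H(x,v_-(x),p_i)=0$ for every $i$. Assumption \textbf{(H1)} (strict convexity of $H$ in $p$) then implies
\[
0=H(x,v_-(x),p)\leq\sum_{i=1}^{k}\lambda_i H(x,v_-(x),p_i)=0,
\]
with strict inequality unless all the $p_i$ coincide. Hence $p=p_1=\cdots=p_k\in D^*v_-(x)$, which completes the proof.

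The main obstacle, though not a serious one, is making sure that the two structural facts used at the start (local semiconcavity of $v_-$ and the subsolution/supersolution inequalities on $D^\pm v_-$) really do carry over from the classical Hamiltonian setting to the contact setting under hypotheses \textbf{(H1)}--\textbf{(H3)}; these are however established in the prequel \cite{WWY2}, so the argument above is essentially a direct adaptation of the classical proof (cf.\ \cite{Fat-b}).
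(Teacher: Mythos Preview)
Your argument is correct and is precisely the standard proof of Theorem~6.4.12 in \cite{CS}, which is exactly what the paper invokes (omitting the details). The two structural inputs you flag---local semiconcavity of $v_-$ and the pointwise validity of the equation at differentiability points---are indeed available from \cite{WWY2}, so the adaptation goes through without change.
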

The proof of the lemma is similar to the one of Theorem 6.4.12 in \cite{CS}, we omit it here for brevity. The following lemma is similar to \cite[Lemma 4.3]{WWY2}.

\begin{lemma}\label{diffe11}
	Let $u\in C(M,\mathbb{R})$, $x\in M$ and $t>0$. If $\gamma:[0,t]\rightarrow M$ is a minimizer of $T^-_tu(x)$, i.e.,
	\[
	T^-_tu(x)=u(\gamma(0))+\int_0^tL(\gamma(\tau),T^-_\tau u(\gamma(\tau)),\dot{\gamma}(\tau))d\tau,
	\]
	then
	\[
	\frac{\partial L}{\partial \dot{x}}(\gamma(t),T^-_tu(\gamma(t)),\dot{\gamma}(t))\in D^+[T_t^-u](\gamma(t)).	
	\]
In particular, if $\gm:(-\infty,0]\ri M$ is ($v_-,L,0$)-calibrated, then for all $t\geq 0$,
\[
	\frac{\partial L}{\partial \dot{x}}(\gamma(-t),v_-(\gamma(-t)),\dot{\gamma}(-t))\in D^+v_-(\gamma(-t)).	
	\]
\end{lemma}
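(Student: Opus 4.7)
The plan is to prove the superdifferential inclusion by the standard needle-variation argument from optimal control, now adapted to the implicit (contact) action. Concretely, to show that $p(t) := \frac{\partial L}{\partial \dot{x}}(\gamma(t), T^-_t u(\gamma(t)), \dot{\gamma}(t))$ lies in $D^+[T^-_t u](\gamma(t))$, it suffices to verify, in a chart around $\gamma(t)$, that
\[
T^-_t u(\gamma(t)+v) - T^-_t u(\gamma(t)) \leq \langle p(t), v \rangle + o(|v|)
\]
as $v \to 0$. The inequality comes from producing a competitor curve ending at $\gamma(t)+v$, and the linear term comes from a first-order expansion of the implicit action along $\gamma$.

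First I would fix a smooth cutoff $\phi\colon[0,t]\to[0,1]$ with $\phi\equiv 0$ on $[0,t/2]$ and $\phi(t)=1$, and for small $v$ define a variation $\eta_v(\tau) := \gamma(\tau)+\phi(\tau)v$ in the chart, so that $\eta_v(0)=\gamma(0)$ and $\eta_v(t)=\gamma(t)+v$. By the variational characterization of $T^-_t$ through the implicit action of \cite{WWY}, one has $T^-_t u(\eta_v(t)) \leq w_v(t)$, where $w_v$ solves
\[
\dot{w}_v(\tau) = L(\eta_v(\tau), w_v(\tau), \dot{\eta}_v(\tau)), \qquad w_v(0) = u(\gamma(0)).
\]
Because $\gamma$ is a minimizer for $T^-_t u(\gamma(t))$, the $v=0$ solution satisfies $w_0(\tau) = T^-_\tau u(\gamma(\tau))$ on $[0,t]$; in particular $w_0(t) = T^-_t u(\gamma(t))$.

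The core step is to linearize $\delta w(\tau) := w_v(\tau) - w_0(\tau)$ to first order in $v$. Since $\eta_v \equiv \gamma$ on $[0,t/2]$, one has $\delta w(t/2) = 0$, and on $[t/2,t]$ the linearized equation reads
\[
\tfrac{d}{d\tau}\delta w = \tfrac{\partial L}{\partial u}\,\delta w + \tfrac{\partial L}{\partial x}\cdot(\phi v) + \tfrac{\partial L}{\partial \dot{x}}\cdot(\dot{\phi} v) + o(|v|),
\]
with all partial derivatives evaluated along $(\gamma, w_0, \dot{\gamma})$. Solving by variation of parameters produces an exponential integrating factor from $\frac{\partial L}{\partial u}$; integration by parts on the $\frac{\partial L}{\partial \dot{x}}$-term, combined with the contact Euler-Lagrange equation $\frac{d}{d\tau}\frac{\partial L}{\partial \dot{x}} = \frac{\partial L}{\partial x} + \frac{\partial L}{\partial u}\frac{\partial L}{\partial \dot{x}}$ satisfied by $\gamma$, makes the interior contributions cancel. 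The boundary term at $\tau=t/2$ vanishes because $\phi(t/2)=0$, and the boundary term at $\tau=t$ is exactly $\langle p(t), v\rangle$, since the integrating factor equals $1$ at the endpoint. Hence $\delta w(t) = \langle p(t), v\rangle + o(|v|)$, which together with $T^-_t u(\gamma(t)+v)\leq w_v(t)$ and $w_0(t)=T^-_t u(\gamma(t))$ gives the claimed superdifferential inequality.

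The \emph{in particular} statement then follows at once. If $\gamma\colon(-\infty,0]\to M$ is $(v_-,L,0)$-calibrated, then for every $t\geq 0$ and every $s>0$ the restriction $\gamma|_{[-t-s,-t]}$ is a minimizer for $T^-_s v_-(\gamma(-t)) = v_-(\gamma(-t))$ (using $T^-_s v_- = v_-$ since $v_-\in\cS_-$). Applying the first part with $u=v_-$ at time $s$ to this subcurve yields $\frac{\partial L}{\partial \dot{x}}(\gamma(-t),v_-(\gamma(-t)),\dot{\gamma}(-t))\in D^+[T^-_s v_-](\gamma(-t)) = D^+ v_-(\gamma(-t))$. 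The main delicacy throughout is the bookkeeping of the exponential integrating factor coming from the fact that $\frac{\partial L}{\partial u}\neq 0$, which is precisely why the variation is localized near $\tau=t$: this forces the factor to evaluate to $1$ at the endpoint where the momentum is read off, and the $\frac{\partial L}{\partial u}\,\delta w$ contribution disappears from the leading-order boundary value.
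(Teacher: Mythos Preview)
The paper does not supply its own proof of this lemma; it simply records that the statement is ``similar to \cite[Lemma~4.3]{WWY2}'' and moves on. Your needle-variation argument via the Herglotz (Carathéodory) ODE is correct and is the standard route for such superdifferential inclusions in the contact setting, so there is nothing substantive to contrast.

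One small inaccuracy in your explanation: the integrating factor $e^{\int_\tau^t \partial_u L}$ equals $1$ at $\tau=t$ automatically, since $\int_t^t=0$; this has nothing to do with where the variation is localized. The genuine role of the localization is twofold: it keeps $\eta_v(0)=\gamma(0)$, so that the initial datum $w_v(0)=u(\gamma(0))$ is unchanged (essential because $u$ is merely continuous and contributes no differentiable boundary term), and it makes the lower boundary term in the integration by parts vanish because $\phi$ is zero there. You should also shrink the support of $\phi$ from $[t/2,t]$ to $[t-\epsilon,t]$ for $\epsilon$ small enough that $\gamma([t-\epsilon,t])$ lies in a single coordinate chart; otherwise the expression $\gamma(\tau)+\phi(\tau)v$ is not well defined on a manifold. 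With these cosmetic fixes your argument goes through as written.
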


\noindent{\bf Proof of Theorem \ref{88996}(1).}
By Proposition \ref{4equ}, we have $\cup_{v_-\in \mathcal{S}_-}G_{v_-}\subseteq\tilde{\mathcal{N}}^-$.
Thus, it suffices to prove $\cup_{v_-\in \mathcal{S}_-}G_{v_-}\supseteq\tilde{\mathcal{N}}^-$.
For each $(x_0,u_0,p_0)\in \tilde{\mathcal{N}}^-$, we need to find a $v_-\in \mathcal{S}_-$ such that  $p_0\in D^*v_-(x_0)$ and $u_0=v_-(x_0)$.

Let $(x(-\tau),u(-\tau),p(-\tau)):=\Phi_{-\tau}(x_0,u_0,p_0)$ with $(x_0,u_0,p_0)=(x(0),u(0),p(0))$ for each $\tau\geq 0$. Let $\gm(t):=x(t)$ for each $t\in (-\infty,0]$, By Proposition \ref{pr1188}, there exists $v_-\in \cS_-$ such that $\gm:(-\infty,0]\ri M$ is ($v_-,L,0$)-calibrated.
In view of \cite[Proposition 4.1]{WWY2}, we have for all $\tau\leq 0$, \[H(\gm(-\tau),u(-\tau),p(-\tau))=0.\]  In particular, $H(x_0,u_0,p_0)=0$. From Lemma \ref{dstar} and Lemma \ref{diffe11}, we have
$p_0\in D^*v_-(x_0)$.
This completes the proof.
\hfill$\Box$

\subsection{Local characterizations}

In \cite[Lemma 4.7,Remark 4.2]{WWY2}, we obtain that
\begin{lemma}\label{iinv}
	For each $v_-\in \cS_-$, let $v_+:=\lim_{t\ri +\infty}T_t^+v_-$.
	For any given $x\in M$ with $v_-(x)=v_+(x)$,  there exists a curve $\gamma:\R\rightarrow M$ with $\gamma(0)=x$ such that  $v_-(\gamma(t))=v_+(\gamma(t))$ for each $t\in \mathbb{R}$, and
	\begin{equation}\label{upmm}
	v_{\pm}(\gamma(t'))-v_{\pm}(\gamma(t))=\int_t^{t'}L(\gamma(s),v_{\pm}(\gamma(s)),\dot{\gamma}(s))ds, \quad \forall t\leq t'\in\mathbb{R}.
	\end{equation}
	Moreover, $v_{\pm}$ are differentiable at $x$ with the same derivative $Dv_{\pm}(x)=\frac{\partial L}{\partial \dot{x}}(x,v_{\pm}(x),\dot{\gamma}(0))$.
\end{lemma}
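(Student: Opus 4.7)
The plan is to build $\gamma$ by concatenating a negatively $(v_-,L,0)$-calibrated curve on $(-\infty,0]$ and a positively $(v_+,L,0)$-calibrated curve on $[0,+\infty)$, both issued from $x$ at time $0$, and then to propagate the pointwise equality $v_-(x)=v_+(x)$ to every point on the resulting orbit. Once that equality is in place, the calibration relation \eqref{upmm} for both $v_-$ and $v_+$ on the whole line, as well as the shared derivative at $x$, will follow by standard weak KAM bookkeeping.

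First I would invoke the one-sided existence of calibrated curves for elements of $\cS_-$ and $\cS_+$ (the same device that, via Proposition \ref{pr1188}, was used in the proof of Proposition \ref{4equ}) to pick $\gamma^-:(-\infty,0]\ri M$ with $\gamma^-(0)=x$ which is $(v_-,L,0)$-calibrated, and $\gamma^+:[0,+\infty)\ri M$ with $\gamma^+(0)=x$ which is $(v_+,L,0)$-calibrated in the forward sense.

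The second, and most delicate, step is to transport the equality $v_-=v_+$ along each half. Starting from $v_+(x)=\lim_{s\to+\infty}T_s^+v_-(x)=v_-(x)$ and the calibration identity
\begin{equation*}
v_-(\gamma^-(t))=v_-(x)-\int_t^0 L(\gamma^-(s),v_-(\gamma^-(s)),\dot\gamma^-(s))\,ds,\quad t\le 0,
\end{equation*}
together with the monotonicity of $s\mapsto T_s^+v_-$ furnished by (H3), I would sandwich $v_+\circ\gamma^-$ between two copies of $v_-\circ\gamma^-$, forcing the two functions to agree on the backward half. A symmetric argument, swapping the roles of $v_-$ and $v_+$ and using $v_-=\lim_{s\to+\infty}T_s^-v_+$, produces the equality along $\gamma^+$. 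With $v_-=v_+$ on the whole orbit, \eqref{upmm} for $v_+$ along $\gamma^-$ (respectively $v_-$ along $\gamma^+$) follows from the calibration already known for the other solution, since the two integrands coincide.

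Third, I would glue the two pieces at $t=0$ and read off the derivative. Lemma \ref{diffe11} provides $p^-:=\partial L/\partial \dot{x}(x,v_-(x),\dot\gamma^-(0))\in D^+v_-(x)$, and its forward analogue provides $p^+:=\partial L/\partial \dot{x}(x,v_+(x),\dot\gamma^+(0))\in D^-v_+(x)$. Combined with the one-sided comparison between $v_-$ and $v_+$ inherited from the monotone convergence $T_s^+v_-\to v_+$ and the tangency $v_-(x)=v_+(x)$, the two sub/superdifferentials are squeezed to a single vector, so both $v_\pm$ are differentiable at $x$ with common gradient $p^-=p^+$. Strict convexity of $H$ in $p$ then forces $\dot\gamma^-(0)=\dot\gamma^+(0)$, so $\gamma:=\gamma^-\cup\gamma^+$ is a genuine $C^1$ extremal and $Dv_\pm(x)=\partial L/\partial \dot{x}(x,v_\pm(x),\dot\gamma(0))$ as claimed.

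The main obstacle I expect is the propagation step. In the classical $u$-independent theory, non-expansiveness of the Lax-Oleinik semigroups makes this immediate; under (H3), however, contractive, conservative and dissipative behaviors coexist, so one must replace non-expansiveness by careful estimates using the monotone flow $T_s^+v_-\nearrow v_+$, and ensure that the resulting inequality stays tight not merely at $x$ but at every point of the orbit.
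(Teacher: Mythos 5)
You have chosen the right architecture, and in fact the paper gives no proof of this lemma at all (it is quoted from \cite[Lemma 4.7, Remark 4.2]{WWY2}); the cited argument is exactly your scheme: a backward $(v_-,L,0)$-calibrated curve glued at $x$ to a forward $(v_+,L,0)$-calibrated curve, propagation of the equality $v_-=v_+$ along both halves, and a semidifferential squeeze at $x$. However, there is a concrete error in your write-up: the monotonicity direction is reversed. Since $v_-=T^-_tv_-$, one has $T^+_tv_-=T^+_tT^-_tv_-\le v_-$ and $t\mapsto T^+_tv_-$ is non-increasing, so $v_+=\lim_{t\ri+\infty}T^+_tv_-\le v_-$; it is not true that $T^+_sv_-\nearrow v_+$. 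This is not cosmetic: both of your key steps close only with $v_+\le v_-$. With $p^-\in D^+v_-(x)$ (Lemma \ref{diffe11}) and $p^+\in D^-v_+(x)$ (its forward analogue), the tangency $v_+\le v_-$ with $v_+(x)=v_-(x)$ yields $p^+\in D^-v_-(x)$ and $p^-\in D^+v_+(x)$, hence differentiability of both functions at $x$ and $p^-=p^+$; with the inequality in the direction you state, the two semidifferentials cannot be played against each other at all.

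The second issue is that the propagation step, which you correctly identify as the crux, is left as a plan, and the tool you invoke (monotonicity of $s\mapsto T^+_sv_-$) is not what closes it. What does close it is calibration of one solution against the domination inequality of the other, together with (L3). Along $\gamma^+$ (calibrated for $v_+$), domination of $v_-$ gives $0\le (v_--v_+)(\gamma^+(t))\le\int_0^t\bigl[L(\gamma^+,v_-(\gamma^+),\dot\gamma^+)-L(\gamma^+,v_+(\gamma^+),\dot\gamma^+)\bigr]ds\le 0$, since $v_-\ge v_+$ and $\frac{\partial L}{\partial u}\le 0$. Along $\gamma^-$ (calibrated for $v_-$), domination of $v_+$ and $|\frac{\partial L}{\partial u}|\le\lambda$ give, for $G(t):=(v_--v_+)(\gamma^-(t))$ and $t\le 0$, the inequality $0\le G(t)\le\lambda\int_t^0G(s)\,ds$, and Gronwall forces $G\equiv 0$; here the sign alone is not enough, so the Lipschitz bound from (H3) is genuinely needed. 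Once the equality is propagated, the rest of your outline is correct and matches the cited proof: the integrands coincide along the orbit, so \eqref{upmm} holds for both $v_\pm$ (including across $t=0$ by additivity), and strict convexity of $H$ in $p$ together with $p^-=p^+$ and $v_-(x)=v_+(x)$ identifies $\dot\gamma^-(0)=\dot\gamma^+(0)$, giving the stated formula for $Dv_\pm(x)$.
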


\medskip
\noindent{\bf Proof of Theorem \ref{88996}(2).}  We  prove
for each $v_-\in\mathcal{S}_-$, $v_+\in\mathcal{S}_+$,
	\[
	\tilde{\mathcal{N}}_{v_-}=\tilde{\Sigma}_{v_-}=\tilde{\I}_{v_-},\quad \tilde{\mathcal{N}}_{v_+}=\tilde{\Sigma}_{v_+}=\tilde{\I}_{v_+}.
	\]

We only need to show the first part, since the second one can be obtained by a similar argument.
	For any given $v_-\in\mathcal{S}_-$, let
	$v_+:=\lim_{t\rightarrow +\infty}T^+_tv_-$.
	We first show that $\tilde{\mathcal{N}}_{v_-}\subseteq\tilde{\Sigma}_{v_-}$. For each $(x_0,u_0,p_0)\in \tilde{\mathcal{N}}_{v_-}$, since $\tilde{\mathcal{N}}_{v_-}$ is invariant by $\Phi_t$, we have
	\[(x(t),u(t),p(t))=\Phi_{t}(x_0,u_0,p_0)\in \tilde{\mathcal{N}}_{v_-}\subseteq G_{v_-}\] for all $t\in \mathbb{R}$. Thus, we have $(x_0,u_0,p_0)\in \Phi_{-t}(G_{v_-})$ for all $t\in \mathbb{R}$, which implies
	\[
	(x_0,u_0,p_0)\in \cap_{t\leq 0}\Phi_t(G_{v_-})=\tilde{\Sigma}_{v_-}.
	\]
	Since $v_-(x)=v_+(x)$ for all $x\in\Sigma_{v_-}$ (\cite[Proposition 4.5]{WWY2}), then by Lemma \ref{iinv} we get
	\begin{align}\label{5-203}
	\tilde{\Sigma}_{v_-}\subseteq \tilde{\I}_{v_-}.
	\end{align}
Next, we show $\tilde{\I}_{v_-}\subseteq\tilde{\mathcal{N}}_{v_-}$.
	For each $(x,u,p)\in \tilde{\I}_{v_-}$, there exists $v_+:=\lim_{t\ri +\infty}T_t^+v_-$
	such that $v_-(x)=v_+(x)=u$, $p=Du_{\pm}(x)$.
	By Lemma \ref{iinv}, there exists a curve $\gamma:\R\rightarrow M$ with $\gamma(0)=x$ such that $v_-(\gamma(t))=v_+(\gamma(t))$ for all $t\in\mathbb{R}$ and  $p=\frac{\partial L}{\partial \dot{x}}(x,u,\dot{\gamma}(0))$. Let $x(t):=\gm(t)$, $u(t):=v_-(\gamma(t))$. By Proposition \ref{pr119955},  $(x(\cdot),u(\cdot)):\R\ri M\times\mathbb{R}$ is a semi-static curve.

By definitions, if $(v_-,v_+)$ is a  conjugate pair, then \[\tilde{\mathcal{I}}_{v_-}=\tilde{\mathcal{I}}_{v_+}=G_{v_-}\cap G_{v_+}.\]
The completes the proof of Theorem \ref{88996}(2).\EEnd

\subsection{Global characterizations}
By Theorem \ref{88996}(1) and $\tilde{\mathcal{N}}\subseteq \tilde{\N}^{\pm}$, we have
\[\tilde{\N}=\cup_{v_-\in \cS_-}\tilde{\N}_{v_-}=\cup_{v_+\in \cS_+}\tilde{\N}_{v_+}.\]

	It remains to prove $\tilde{\N}$ is closed, since the closedness of $\tilde{\N}^{\pm}$ are similar to be obtained. Let $\{(x_n,u_n,p_n)\}_{n\in \mathbb{N}}\subseteq \tilde{\mathcal{N}}$ with
	$(x_n,u_n,p_n)\to (x_0,u_0,p_0)$ as $n\to\infty$. It is sufficient to show that $(x_0,u_0,p_0)\in \tilde{\mathcal{N}}$. Let
\[(x_0(t),u_0(t),p_0(t)):=\Phi_t(x_0,u_0,p_0).\]
For each $n\in \mathbb{N}$, let \[(x_n(t),u_n(t),p_n(t)):=\Phi_t(x_n,u_n,p_n).\]
Note that $(x_n,u_n,p_n)\ri (x_0,u_0,p_0)$ as $n\ri +\infty$. By Proposition \ref{ooo}, $\Phi_t(x_n,u_n,p_n)$ is uniformly bounded for each $n\in \mathbb{N}$.
By the continuity and differentiability of solutions of ordinary differential equations with respect to initial values,	the solution $(x_0(t),u_0(t),p_0(t))$  exists for all $t\in \R$. Thus, we only need to prove $(x_0(t),u_0(t))$ is a semi-static curve.

 For each $t_1$, $t_2\in\R$ with $t_1\leq t_2$, take $K>0$ such that $[t_1,t_2]\in[-K,K]$.
	Note that the sequence of $(x_n(t),u_n(t),p_n(t))$ converges uniformly to  $(x_0(t),u_0(t),p_0(t))$ as $n\to \infty$ on $[-K,K]$. Thus, we get $(x_n(t_i),u_n(t_i),p_n(t_i))\to (x_0(t_i),u_0(t_i),p_0(t_i))$ as $n\to \infty$ for $i=1$, $2$.

	Fix $s>0$ and by the Lipschitz continuity of $h_{x_0,u_0}(x,t)$ w.r.t. $x_0,u_0,x$, for each $\varepsilon>0$ there is $N\in\mathbb{N}$ such that
	\[
	|h_{x_n(t_1),u_n(t_1)}(x_n(t_2),s)-h_{x_0(t_1),u_0(t_1)}(x_0(t_2),s)|\leq \varepsilon,\quad \forall n>N.
	\]
	Since $(x_n(t),u_n(t))$ is semi-static, then we have
	\[
	h_{x_0(t_1),u_0(t_1)}(x_0(t_2),s)\geq h_{x_n(t_1),u_n(t_1)}(x_n(t_2),s)-\epsilon\geq  u_n(t_2)-\epsilon.
	\]
	Letting $\varepsilon\to 0$ and $n\to\infty$, we get
	\[
	h_{x_0(t_1),u_0(t_1)}(x_0(t_2),s)\geq u_0(t_2).
	\]
	Hence, $\inf_{s>0}h_{x_0(t_1),u_0(t_1)}(x_0(t_2),s)\geq u_0(t_2)$.
	On the other hand, by the minimality property of $h_{x_0,u_0}(x,t)$, we have
	\[
	h_{x_0(t_1),u_0(t_1)}(x_0(t_2),t_2-t_1)\leq u_0(t_1),
	\]
	which gives rise to $\inf_{s>0}h_{x_0(t_1),u_0(t_1)}(x_0(t_2),s)\leq u_0(t_2)$.
	
Since $(x_n(t),u_n(t))$ is globally minimizing, then it is straightforward to see that $(x_0(t),u_0(t))$ is also a globally minimizing curve. Therefore, $(x_0(t),u_0(t))$ is semi-static.

This completes the proof of of Theorem \ref{88996}(3).

\section{Aubry sets and Mather sets}\label{asetmset}
This part is devoted to proving Theorem \ref{namss}. Item (1) follows from Theorem  \ref{88996}(1) directly. We prove Item (2) and Item (3) in the following. At the beginning, we  generalize Peierls barrier and Ma\~{n}\'{e} potential to contact Hamiltonian systems and show their uniformly Lipschitz properties.
\begin{proposition}[\cite{SWY}]\label{con}
	For each $\varphi\in C(M,\mathbb{R})$, the uniform limit $\lim_{t\rightarrow +\infty}T^-_t\varphi(x)$ exists. Let \[u_\infty(x):=\lim_{t\rightarrow +\infty}T^-_t\varphi(x)\] for each  $x\in M$. Then $u_\infty(x)$ is a viscosity solution of equation (\ref{hj}).
\end{proposition}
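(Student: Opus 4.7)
The plan is to combine compactness of the family $\{T^-_t\varphi\}_{t \ge 1}$ with a comparison/monotonicity argument coming from (H3) to upgrade subsequential convergence to a genuine uniform limit, and then invoke stability of viscosity solutions to identify the limit. First I would establish that $\{T^-_t\varphi\}_{t \ge 1}$ is equi-Lipschitz and uniformly bounded on $M$. The equi-Lipschitz bound comes from the variational representation of $T^-_t\varphi$ together with standard Tonelli a priori estimates confining minimizers' velocities to a compact set depending only on $\|\varphi\|_\infty$ and the structural constants. The uniform sup-norm bound uses admissibility (A): pick any viscosity solution $\bar{u}$ of (HJ) and constants $c_\pm$ with $\bar{u} + c_- \le \varphi \le \bar{u} + c_+$; the monotonicity of $T^-_t$ (a consequence of $\partial L/\partial u \le 0$, dual to (H3)) sandwiches $T^-_t\varphi$ between the evolutions of $\bar{u} + c_\pm$, which themselves stay within a fixed interval since $T^-_t \bar{u} = \bar{u}$. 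By Arzelà-Ascoli, every sequence $t_n \to \infty$ admits a subsequence $t_{n_k}$ along which $T^-_{t_{n_k}}\varphi$ converges uniformly to some $u \in C(M,\R)$.

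Next I would show that every such limit $u$ is a viscosity solution of (HJ). Using the semigroup property $T^-_{t_{n_k}+s}\varphi = T^-_s(T^-_{t_{n_k}}\varphi)$ together with continuity of $T^-_s$ with respect to uniform convergence, and extracting if needed a further subsequence so that the left-hand side converges as well, one obtains $T^-_s u = u$ for every $s \ge 0$. Stability of viscosity sub-/super-solutions under uniform convergence then promotes $u$ to a viscosity solution of $H(x,u,Du) = 0$.

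The harder step is to rule out different subsequences producing different limits. Here I would fix one subsequential limit $u$ and set
\[
\Psi_+(t) := \max_{x\in M}\bigl(T^-_t\varphi(x) - u(x)\bigr), \qquad \Psi_-(t) := \min_{x\in M}\bigl(T^-_t\varphi(x) - u(x)\bigr).
\]
The key observation is that, under (H3), any vertical translate $u + c$ with $c \ge 0$ is a viscosity super-solution of (HJ) (since $H(x,u+c,Du) \ge H(x,u,Du) = 0$), and symmetrically $u - c$ is a sub-solution. Propagating the pointwise inequality $T^-_s\varphi \le u + \Psi_+(s)$ by the semigroup and applying the comparison principle shows that $\Psi_+$ is non-increasing in $t$; the analogous argument gives $\Psi_-$ non-decreasing. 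Since $\Psi_\pm(t_{n_k}) \to 0$ by construction, monotonicity forces $\Psi_\pm(t) \to 0$, i.e. $T^-_t\varphi$ converges uniformly to $u_\infty := u$. The main obstacle is precisely this last step: under only $\partial H/\partial u \ge 0$ the semigroup is not strictly contracting, so no Gronwall-type decay is available, and the argument must lean on the comparison-principle form of monotonicity together with the fact that vertical shifts of a solution by non-negative constants remain super-/sub-solutions, a step that genuinely requires (H3) in tandem with admissibility.
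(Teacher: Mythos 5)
The paper itself does not prove this proposition: it is imported verbatim from \cite{SWY}, so the only comparison available is with the known proofs of such convergence results, and against those your scheme has a genuine gap at its central step. The gap is Step 2, the claim that a uniform subsequential limit $u$ of $T^-_{t_{n_k}}\varphi$ satisfies $T^-_s u=u$. From the semigroup identity $T^-_{t_{n_k}+s}\varphi=T^-_s\bigl(T^-_{t_{n_k}}\varphi\bigr)$ and continuity of $T^-_s$ you only obtain that the limit of $T^-_{t_{n_k}+s}\varphi$ along a further subsequence equals $T^-_s u$; identifying that limit with $u$ presupposes exactly the uniqueness of subsequential limits you are trying to prove, so the argument is circular. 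This is not a technicality but the entire content of the proposition: since (H3) allows $\partial H/\partial u\equiv 0$, the statement contains Fathi's convergence theorem for the critical Lax--Oleinik semigroup (note that (A) forces $c(H)=0$ in that case), and no proof of that theorem by soft compactness plus comparison is known --- indeed, a bounded, equi-Lipschitz, order-preserving, non-expansive semigroup on $C(M,\mathbb{R})$ can perfectly well fail to converge (its $\omega$-limit set may be a nontrivial set on which the semigroup acts by isometries; the Fathi--Mather time-periodic examples show this failure actually occurs for Lax--Oleinik-type evolutions). Any complete proof must supply a genuinely variational ingredient (calibrated curves, the Aubry set as the set governing the limit, or the fine estimates on the action function used in \cite{SWY}) to show that some subsequential limit is an actual fixed point; your proposal never produces this.

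The remaining steps are essentially sound but should be stated more carefully. Step 1 (uniform bound via sandwiching between $T^-_t(\bar u+c_\pm)$, using that solutions are fixed points and that $T^-_t$ is order-preserving and non-expansive under (H3), plus the standard equi-Lipschitz estimate) is fine. In Step 3, beware that the \emph{stationary} comparison principle fails under (H3) --- non-uniqueness of solutions of $H(x,u,Du)=0$ is a main theme of this paper --- so the monotonicity of $\Psi_+$ must be derived from the order-preserving property of $T^-_t$ together with the fact that $T^-_r w\le w$ for a supersolution $w$ (an evolutionary comparison), and it only comes for free when $\Psi_+(s)\ge 0$; when $\Psi_+(s)<0$ one instead gets $\Psi_+(s')\le 0$ for $s'\ge s$ from $T^-_{s'-s}u=u$, which still suffices. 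But all of this already uses that $u$ is a solution fixed by the semigroup, i.e.\ it sits downstream of the unproved Step 2, so as written the proposal does not establish the proposition.
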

 Thus, the following function
\begin{align}\label{ba}
h_{x_0,u_0}(x,+\infty):=\lim_{t\rightarrow +\infty}h_{x_0,u_0}(x,t),\quad x\in M
\end{align}
is well defined. It can be viewed as {\it Peierls barrier} of (\ref{c}). Accordingly, $\inf_{s>0}h_{x_0,u_0}(x,s)$ is also well defined, which can be viewed as  {\it Ma\~{n}\'{e} potential} of (\ref{c}).

\begin{proposition}\label{aclipp}
	The function $(x_0,u_0,x)\mapsto h_{x_0,u_0}(x,+\infty)$ is uniformly Lipschitz  continuous on $M\times\R\times M$.
\end{proposition}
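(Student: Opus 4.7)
The plan is to first establish that $h_{x_0,u_0}(x,t)$ is Lipschitz in $(x_0,u_0,x)$ with constants \emph{independent of $t$} for all $t\geq 1$, and then pass to the pointwise limit $t\to+\infty$.

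\textbf{Step 1} (Lipschitz in $u_0$ with constant $1$, uniform in $t$). Fix $x_0,x\in M$, $t>0$, and take $u_0<u'_0$. For any absolutely continuous $\gm:[0,t]\to M$ with $\gm(0)=x_0,\gm(t)=x$, let $u(\cdot)$ and $u'(\cdot)$ solve the Herglotz ODE $\dot v=L(\gm,v,\dot\gm)$ with initial data $u_0$ and $u'_0$ respectively. The difference $\delta(s):=u'(s)-u(s)$ satisfies a linear ODE with coefficient $\partial_u L=-\partial_u H\in[-\lambda,0]$ by (H3), so Gronwall gives $0\le\delta(s)\le u'_0-u_0$. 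Taking the infimum over $\gm$ yields $0\le h_{x_0,u'_0}(x,t)-h_{x_0,u_0}(x,t)\le u'_0-u_0$, independently of $t$.

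\textbf{Step 2} (Lipschitz in $x_0,x$ with constants independent of $t\geq 1$). This is a standard endpoint-modification argument. Let $\gm:[0,t]\to M$ be a minimizer for $h_{x_0,u_0}(x,t)$ with $t\geq 1$ (existence from \cite{WWY2}). The key ingredient is a uniform bound $|\dot\gm(s)|\leq V_0$ for $s\in[0,1]\cup[t-1,t]$, with $V_0=V_0(H)$. I would obtain this by bounding the reduced action $h_{x_0,u_0}(x,t)-u_0$ uniformly in $(x_0,u_0,x,t)$ via the admissibility assumption (A) combined with Step 1, and then invoking superlinearity (H2) plus a standard comparison argument. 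Replacing $\gm$ on $[t-1,t]$ by a curve connecting $\gm(t-1)$ to a nearby $x'$ (and symmetrically on $[0,1]$ for a nearby $x'_0$) and estimating the cost via the regularity of $L$ produces $|h_{x_0,u_0}(x,t)-h_{x'_0,u_0}(x',t)|\leq K\bigl(d(x_0,x'_0)+d(x,x')\bigr)$ with $K=K(H)$.

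\textbf{Step 3} (Pass to the limit). Since $h_{x_0,u_0}(x,+\infty)$ is defined as the pointwise limit of $h_{x_0,u_0}(x,t)$ as $t\to+\infty$ (Proposition \ref{con} guarantees it is finite), and a pointwise limit of functions with a common Lipschitz constant is Lipschitz with the same constant, Steps 1--2 yield the proposition.

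The main obstacle is the uniformity of Step 2 over $u_0\in\R$: a priori, the velocity bound for minimizers might depend on $|u_0|$, since $L$ depends on $u$ and the $u$-values traversed by the minimizer could grow with $|u_0|$. The resolution is that Step 1 reduces this question to showing the reduced action $h_{x_0,u_0}(x,t)-u_0$ is uniformly bounded in $(x_0,u_0,x,t)$, which in turn follows from admissibility (A) together with the non-expansiveness furnished by (H3); once this reduced action is under control, superlinearity forces the desired $u_0$-uniform velocity bound.
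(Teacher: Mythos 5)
Your overall scheme (prove Lipschitz bounds at finite time with constants independent of $t$, then let $t\to+\infty$) is the same as the paper's, and your Step 1 and Step 3 are fine — Step 1 is exactly Proposition \ref{nonehh}. The problem is Step 2, and specifically the claim you yourself identify as the crux: that the reduced action $h_{x_0,u_0}(x,t)-u_0$ is uniformly bounded in $(x_0,u_0,x,t)$. This is false under (H3). Take the discounted case $H=\lambda u+h(x,p)$, $\lambda>0$: then $h_{x_0,u_0}(x,t)=e^{-\lambda t}u_0+\inf_\gamma\int_0^t e^{-\lambda(t-s)}l(\gamma,\dot\gamma)\,ds$, so $h_{x_0,u_0}(x,t)-u_0=(e^{-\lambda t}-1)u_0+O(1)$, which blows up as $u_0\to-\infty$ for any fixed $t\geq 1$. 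Non-expansiveness only says the reduced action is (non-increasing and) $2$-Lipschitz in $u_0$, not bounded, and admissibility (A) controls the action only for $u_0$ in the (bounded) range of a fixed solution $v_-$, not for all $u_0\in\R$. So the proposed derivation of a $u_0$- and $t$-uniform velocity bound for minimizers collapses, and with it the endpoint-modification argument; note also that even granting such bounds, modifying the initial endpoint changes the $u$-evolution along the whole curve, so you would still need a comparison argument (monotonicity in the initial value) that the sketch does not supply.

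The paper's proof avoids minimizer estimates altogether. For Lipschitzness in $x$ it uses the Markov property, $h_{x_0,u_0}(x,t)=\inf_{z\in M}h_{z,\,h_{x_0,u_0}(z,t-\delta)}(x,\delta)$, together with the boundedness of $(x,t)\mapsto h_{x_0,u_0}(x,t)$ on $M\times[\delta,+\infty)$ (imported from \cite[Theorem B.1]{WWY2} — a bound on $h$ itself, not on $h-u_0$) and the local Lipschitz continuity of the finite-time action $h_{\cdot,\cdot}(\cdot,\delta)$ on $M\times[-K,K]\times M$; this gives a constant $\kappa$ valid for all $t>2\delta$, hence for the limit. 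For Lipschitzness in $x_0$ it prepends a unit-speed geodesic from $x_1$ to $x_2$ run in time $\Delta t=d(x_1,x_2)$, yielding $h_{x_1,u_0}(x_2,\Delta t)\leq u_0+C_2\,\Delta t$, and then Markov plus monotonicity give $h_{x_1,u_0}(x,t+\Delta t)\leq h_{x_2,u_0}(x,t)+C_2\,\Delta t$, which passes to the limit. If you want to salvage your route, you would have to replace the false reduced-action bound by this kind of boundedness of $h$ itself (or restrict to $u_0$ in compact sets), at which point the semigroup/Markov argument is both easier and exactly what the paper does.
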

\begin{proof}
Given $(x_0,x)\in M\times M$, by Proposition \ref{nonehh}, $u_0\mapsto h_{x_0,u_0}(x,+\infty)$ is uniformly Lipschitz  continuous on $\R$.

Next, we  prove that $x\mapsto h_{x_0,u_0}(x,+\infty)$ is Lipschitz continuous. By \cite[Theorem B.1]{WWY2}, for a given $\delta>0$, $(x,t)\mapsto h_{x_0,u_0}(x,t)$ is bounded by $K>0$ on $M\times [\delta,+\infty)$. Note that for any $t> 2\delta$, we have
\begin{align*}
&\left|h_{x_0,u_0}(x,t)-h_{x_0,u_0}(y,t)\right|\\
=&\left|\inf_{z\in M}h_{z,h_{x_0,u_0}(z,t-\delta)}(x,\delta)-\inf_{z\in M}h_{z,h_{x_0,u_0}(z,t-\delta)}(y,\delta)\right|\\
\leq &\sup_{z\in M}\left|h_{z,h_{x_0,u_0}(z,t-\delta)}(x,\delta)-h_{z,h_{x_0,u_0}(z,t-\delta)}(y,\delta)\right|.
\end{align*}
Since $h_{\cdot,\cdot}(\cdot,{\delta})$ is uniformly Lipschitz on $M\times [-K,K]\times M$ with some Lipschitz constant $\kappa$, then
\[
\left|h_{x_0,u_0}(x,t)-h_{x_0,u_0}(y,t)\right|\leq \kappa\ d(x,y), \quad \forall t> 2\delta,
\]
which means
\[
\left|h_{x_0,u_0}(x,+\infty)-h_{x_0,u_0}(y,+\infty)\right|\leq \kappa\ d(x,y).
\]

Finally, we  prove that $x_0\mapsto h_{x_0,u_0}(x,+\infty)$ is Lipschitz continuous. Given $x_1,x_2\in M$,
let $\gamma:[0,d(x_1,x_2)]\to M$ be a geodesic connecting $x_1$ and $x_2$, parameterized by arclength with constant speed $\|\dot{\gamma}(s)\|_{\gamma(s)}=1$. By \cite[Lemma 3.1 and Lemma 3.2]{WWY}, we have $h_{x_1,u_0}(\gm(s),s)\ri u_0$ as $s\ri 0^+$. Combining with the uniform boundedness of $ h_{x_1,u_0}(\cdot,\cdot)$ on $M\times [\delta,+\infty)$, it follows that $h_{x_1,u_0}(\gm(s),s)$ is bounded by $C_1$ for each $s>0$. Let
	\[
	C_2:=\sup\{L(x,u,v)\ |\ x\in M,\ |u|\leq C_1,\ \|v\|_x=1\}.
	\]
	Since $\|\dot{\gamma}(s)\|_{\gamma(s)}=1$ for all $s\in[0,d(x_1,x_2)]$ and $|h_{x_1,u_0}(\gm(s),s)|\leq C_1$, we have
	 \[
	 L(\gamma(s),h_{x_1,u_0}(\gm(s),s),\dot{\gamma}(s))\leq C_2,\quad \forall s\in[0,d(x_1,x_2)].
	 \]
Let $\Delta t:=d(x_1,x_2)$. By definition, we have
\begin{align*}
h_{x_1,u_0}(x_2,\Delta t)&\leq u_0+\int_0^{\Delta t}L(\gamma(s),h_{x_1,u_0}(\gm(s),s),\dot{\gamma}(s))ds,\\
&\leq u_0+C_2 \Delta t.
\end{align*}
Moreover,
\begin{align*}
h_{x_1,u_0}(x,t+\Delta t)&\leq h_{x_2,h_{x_1,u_0}(x_2,\Delta t)}(x,t)\\
&\leq h_{x_2,u_0+C_2 \Delta t}(x,t)\\
&\leq h_{x_2,u_0}(x,t)+ C_2 \Delta t.
\end{align*}
It follows that
\[h_{x_1,u_0}(x,+\infty)\leq h_{x_2,u_0}(x,+\infty)+C_2d(x_1,x_2).\]
By exchanging the roles of $x_1$ and $x_2$, we know that $x_0\mapsto h_{x_0,u_0}(x,+\infty)$ is also Lipschitz continuous.
\end{proof}

\begin{proposition}\label{aclippinf}
	The function $(x_0,u_0,x)\mapsto \inf_{s>0}h_{x_0,u_0}(x,s)$ is uniformly Lipschitz  continuous on $M\times\R\times M$.
\end{proposition}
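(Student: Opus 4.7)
The strategy is to adapt the proof of Proposition \ref{aclipp} to the infimum, with $\inf_{s>0}$ playing the role of the limit at $s=+\infty$. I would establish uniform Lipschitz continuity separately in the three variables $u_0$, $x$, and $x_0$, with constants that do not blow up on bounded sets.

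A useful preliminary observation is that the Ma\~n\'e potential $\inf_{s>0}h_{x_0,u_0}(x,s)$ is itself controllably bounded on $M\times\R\times M$: it is dominated from above by $h_{x_0,u_0}(x,+\infty)$, which is Lipschitz by Proposition \ref{aclipp}, and it is bounded from below by routine action-function coercivity (for instance via comparison with any $v_-\in\cS_-$, which exists thanks to (A)). This lets me, for any $\epsilon>0$, choose a near-minimizing time $s_\epsilon>0$ at which $|h_{x_0,u_0}(x,s_\epsilon)|$ is a priori controlled by a constant $C_1$, even though $s_\epsilon$ itself may have to be taken large.

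Lipschitz continuity in $u_0$ follows from exactly the input already used in Proposition \ref{aclipp}: by Proposition \ref{nonehh}, $u_0\mapsto h_{x_0,u_0}(x,s)$ is uniformly Lipschitz with a constant independent of $s,x_0,x$, and this property is preserved by taking the infimum. For the $x$-variable, I fix $x_0,u_0$ and two points $x_1,x_2\in M$, set $\Delta t:=d(x_1,x_2)$, and take a unit-speed minimizing geodesic $\gamma:[0,\Delta t]\to M$ from $x_1$ to $x_2$. After picking a near-minimizer $s_\epsilon$ as above, the semigroup property of $h$ combined with the one-step cost estimate used in the proof of Proposition \ref{aclipp} gives
\[
h_{x_0,u_0}(x_2,s_\epsilon+\Delta t)\leq h_{x_1,h_{x_0,u_0}(x_1,s_\epsilon)}(x_2,\Delta t)\leq h_{x_0,u_0}(x_1,s_\epsilon)+C_2\,\Delta t,
\]
where $C_2:=\sup\{L(x,u,v):x\in M,\ |u|\leq C_1,\ \|v\|_x=1\}$. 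Taking the infimum on the left, exploiting near-minimality at $s_\epsilon$, letting $\epsilon\to 0$, and swapping $x_1$ with $x_2$ gives the Lipschitz bound in $x$. For the $x_0$-variable one runs the geodesic in the opposite direction and combines the one-step estimate with the Lipschitz bound in the initial $u$-value already in hand:
\[
h_{x_2,u_0}(x,s+\Delta t)\leq h_{x_1,h_{x_2,u_0}(x_1,\Delta t)}(x,s)\leq h_{x_1,u_0+C_2\Delta t}(x,s)\leq h_{x_1,u_0}(x,s)+\kappa\,C_2\,\Delta t,
\]
where $\kappa$ is the uniform Lipschitz constant in the initial value from (H3). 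Taking the infimum over $s$ and swapping $x_1\leftrightarrow x_2$ concludes the third direction, and the three estimates combine to the claim.

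The main obstacle is the one sidestepped by the preliminary observation: because the infimum defining the Ma\~n\'e potential may only be approached as $s\to\infty$, the $s$-independent bound $|h_{x_0,u_0}(x_1,s_\epsilon)|\leq C_1$ required in order to define $C_2$ is not free from fixing a particular $s$. It must be extracted from the uniform boundedness of the infimum itself, which rests on Proposition \ref{aclipp} and on admissibility (A). Once this a priori control is secured, the remainder of the proof is a direct adaptation of the scheme already used in Proposition \ref{aclipp}.
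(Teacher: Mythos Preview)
Your proposal is correct and follows the same variable-by-variable scheme as the paper: Proposition~\ref{nonehh} for $u_0$, and the Markov property plus a short-geodesic cost estimate for $x_0$. The only genuine deviation is in the $x$-direction. The paper recycles from the proof of Proposition~\ref{aclipp} the uniform-in-$t$ bound $|h_{x_0,u_0}(x,t)-h_{x_0,u_0}(y,t)|\leq\kappa\,d(x,y)$ (valid for $t>2\delta$) and then invokes $|\inf_s f-\inf_s g|\leq\sup_s|f-g|$; you instead pick a near-minimizing time $s_\epsilon$ at the first point and append a geodesic segment of length $d(x_1,x_2)$ to reach the second---the very mechanism the paper reserves for the $x_0$-variable. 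Your route makes explicit the a~priori bound $|h_{x_0,u_0}(x_1,s_\epsilon)|\leq C_1$ needed to define $C_2$, while the paper's route avoids singling out a near-minimizer but leans on the equi-Lipschitz family estimate. Both are valid; the difference is packaging rather than substance.
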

\begin{proof}
This proof is similar to Proposition \ref{aclipp}. Given $(x_0,x)\in M\times M$, $t>0$, by Proposition \ref{nonehh}, $u_0\mapsto h_{x_0,u_0}(x,t)$ is uniformly Lipschitz  continuous on $\R$. Moreover,
\[|\inf_{s>0}h_{x_0,u_1}(x,s)-\inf_{s>0}h_{x_0,u_2}(x,s)|\leq \sup_{s>0}|h_{x_0,u_1}(x,s)-h_{x_0,u_2}(x,s)|\leq |u_1-u_2|.\]

Next, we  prove that $x\mapsto \inf_{s>0}h_{x_0,u_0}(x,s)$ is Lipschitz continuous.  By the proof of Proposition \ref{aclipp}, $h^{\cdot,\cdot}(\cdot,{\delta})$ is uniformly Lipschitz on $M\times [-K,K]\times M$ with some Lipschitz constant $\kappa$, Namely
\[
\left|h_{x_0,u_0}(x,t)-h_{x_0,u_0}(y,t)\right|\leq \kappa\ d(x,y), \quad \forall t> 2\delta,
\]
which means
\[
\left|\inf_{s>0}h_{x_0,u_0}(x,s)-\inf_{s>0}h_{x_0,u_0}(y,s)\right|\leq \sup_{s>0}|h_{x_0,u_0}(x,s)-h_{x_0,u_0}(y,s)|\leq \kappa\ d(x,y).
\]

Finally, we  prove that $x_0\mapsto \inf_{s>0}h_{x_0,u_0}(x,s)$ is Lipschitz continuous. Given $x_1,x_2\in M$,
let $\Delta t:=d(x_1,x_2)$. By the proof of Proposition \ref{aclipp}, we have
\begin{align*}
h_{x_1,u_0}(x,t+\Delta t)\leq h_{x_2,u_0}(x,t)+ C_2 \Delta t.
\end{align*}
It follows that
\[\inf_{s>0}h_{x_1,u_0}(x,s)\leq \inf_{s>0}h_{x_2,u_0}(x,s)+C_2d(x_1,x_2).\]
By exchanging the roles of $x_1$ and $x_2$, we know that $x_0\mapsto \inf_{s>0}h_{x_0,u_0}(x,s)$ is also Lipschitz continuous.
\end{proof}

Let $(x(\cdot),u(\cdot)):\mathbb{R}\to M\times\mathbb{R}$ be a static curve. By Proposition \ref{con}, \[h_{x(s),u(s)}(x(t),+\infty):=\lim_{\tau\ri+\infty}h_{x(s),u(s)}(x(t),\tau)\] is well defined for each $s,t\in \R$. By \cite[Proposition 3.2]{WWY2}, there holds

\begin{proposition}\label{lem3.1}
	If $(x(\cdot),u(\cdot)):\R\ri M\times\R$ is static, then
	\[
	u(t)=h_{x(s),u(s)}(x(t),+\infty),\quad \forall s,\ t\in\mathbb{R}.
	\]
\end{proposition}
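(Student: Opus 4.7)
The equality will be established by proving the two inequalities $u(t)\le h_{x(s),u(s)}(x(t),+\infty)$ and $h_{x(s),u(s)}(x(t),+\infty)\le u(t)$ separately, the first being essentially definitional and the second requiring a splicing construction.

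For the lower bound, I would invoke the definition of a static curve directly: by staticity one has $u(t)=\inf_{\sigma>0}h_{x(s),u(s)}(x(t),\sigma)$ for every $s,t\in\R$, with no ordering restriction between $s$ and $t$. Hence $u(t)\le h_{x(s),u(s)}(x(t),\tau)$ for every $\tau>0$, and letting $\tau\to+\infty$ (the limit existing by Proposition \ref{con}, which underlies the well-posedness stated just before the proposition) yields $u(t)\le h_{x(s),u(s)}(x(t),+\infty)$.

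For the upper bound, the plan is to manufacture, given arbitrary $\epsilon>0$ and $T>0$ large, a curve from $x(s)$ to $x(t)$ of total duration $\ge T$ whose contact action transports $u(s)$ to a final value at most $u(t)+\epsilon$. The construction splices three pieces: (i) the static arc from $(x(s),u(s))$ to $(x(s+T),u(s+T))$ of duration $T$, whose endpoint $u$-coordinate is $u(s+T)$ by the minimizing property; (ii) a near-optimal return arc from $(x(s+T),u(s+T))$ to $x(s)$ of some duration $\sigma>0$, whose existence is guaranteed by the static identity $\inf_{\sigma>0}h_{x(s+T),u(s+T)}(x(s),\sigma)=u(s)$, producing a quasi-closed loop at $x(s)$ whose endpoint $u$-value exceeds $u(s)$ by at most some $\delta>0$; (iii) the trajectory of the static curve on $[s,t]$ reinitialized from $(x(s),u(s)+\delta')$ with $0\le\delta'\le\delta$. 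The uniform Lipschitz dependence of $h_{x_0,u_0}(x,t)$ on $u_0$ (Proposition \ref{nonehh}, invoked also in the proofs of Propositions \ref{aclipp}--\ref{aclippinf}) forces the final $u$-value after step (iii) to exceed $u(t)$ by $O(\delta)$, so that choosing $\delta$ small makes the excess at most $\epsilon$. Since $T$ is free, the total duration can be taken arbitrarily large, giving $h_{x(s),u(s)}(x(t),+\infty)\le u(t)+\epsilon$, and $\epsilon\to 0$ closes the argument.

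The main obstacle lies in step (iii): in the contact framework the action is not additive, since $L$ depends on $u$, so the classical loop-concatenation trick does not a priori cancel errors. The non-decreasing assumption (H3) is precisely what makes the scheme work: it forces $u_0\mapsto h_{x_0,u_0}(x,t)$ to be $1$-Lipschitz, and hence the perturbation induced by restarting step (iii) from a shifted initial $u$-value remains proportional to $\delta$ uniformly in the elapsed time, rather than amplifying as $T\to\infty$.
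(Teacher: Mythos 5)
The paper offers no internal proof of Proposition \ref{lem3.1}: it is quoted directly from \cite[Proposition 3.2]{WWY2}, so your blind argument is being compared against a citation rather than an in-text proof. That said, your self-contained argument is essentially correct and is in the same spirit as the standard one. The lower bound is exactly as you say: staticity gives $u(t)\le h_{x(s),u(s)}(x(t),\tau)$ for every $\tau>0$, and the limit exists by the discussion following Proposition \ref{con}. For the upper bound your splicing works, but it should be formalized through the Markov property together with monotonicity and the $1$-Lipschitz dependence on $u_0$ (Proposition \ref{nonehh}), since the action is implicit and not additive: from $h_{x(s),u(s)}(x(s+T),T)=u(s+T)$ (global minimality) and a time $\sigma$ with $h_{x(s+T),u(s+T)}(x(s),\sigma)\le u(s)+\delta$ (staticity) one gets $h_{x(s),u(s)}(x(s),T+\sigma)\le u(s)+\delta$, and then appending the last piece gives $h_{x(s),u(s)}(x(t),T+\sigma+(t-s))\le u(t)+\delta$ with no error amplification, exactly as you claim. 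Two small repairs are needed. First, your step (iii) literally uses ``the trajectory of the static curve on $[s,t]$'' and hence tacitly assumes $t>s$; since the proposition is asserted for all $s,t\in\R$, for $t\le s$ you must instead take a near-optimal arc supplied by the static identity $u(t)=\inf_{\tau>0}h_{x(s),u(s)}(x(t),\tau)$ (the same device as in your step (ii)). In fact the detour back to $x(s)$ is unnecessary: going directly from $(x(s+T),u(s+T))$ to $x(t)$ by staticity yields a two-piece splice that treats all $s,t$ uniformly. Second, the passage from ``values at most $u(t)+\epsilon$ at arbitrarily large durations'' to a bound on $h_{x(s),u(s)}(x(t),+\infty)$ relies on the existence of the full limit, which you did invoke; it is worth making explicit, since otherwise one only controls a liminf. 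With these adjustments your proof is complete and, as far as one can tell, follows the same circle of ideas as the proof in \cite{WWY2} that the paper cites.
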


\subsection{Asymptotic behavior}

We only need to prove that for each $(x_0,v_0,p_0)\in \tilde{\mathcal{N}}^+$, the $\omega$-limit set  of $(x_0,v_0,p_0)$ is contained in $\tilde{\mathcal{A}}$. Based on the invariance (resp. forward invariance) of $\tilde{\mathcal{A}}_{v_-}$ (resp. $\tilde{\mathcal{N}}_{v_-}^+$),  for each $(x_0,v_0,p_0)\in \tilde{\mathcal{N}}_{v_-}^+$, $\omega$-limit set of $(x_0,v_0,p_0)$ is contained in $\tilde{\mathcal{A}}_{v_-}$.

Let $z:=(x_0,v_0,p_0)\in \tilde{\mathcal{N}}^+$. Denote $(x(t),u(t),p(t))=\Phi_t(z)$ for each $t\geq 0$. Then $(x(\cdot),u(\cdot)):[0,+\infty)\ri M\times\R$ is positively semi-static. Let $z_\omega$ be an $\omega$-limit point of the orbit passing through $z$. Denote $(x_{\omega}(t),u_{\omega}(t),p_{\omega}(t))=\Phi_t(z_\omega)$. By Theorem \ref{88996},  $(x_{\omega}(t),u_{\omega}(t))$ is semi-static, namely for each $t_1\geq t_2$,
\begin{equation}\label{444}
u_{\omega}(t_1)=\inf_{s>0}h_{x_{\omega}(t_2),u_{\omega}(t_2)}(x_{\omega}(t_1),s).
\end{equation}
It remains to show that for each $t_1<t_2$, (\ref{444}) still holds.  One can find $(x(s_n),u(s_n))$ and $(x(t_n),u(t_n))$ with
\[s_n<s_n+(t_2-t_1)=:t_n<s_{n+1},\quad s_n,t_n\rightarrow+\infty,\quad as \ n\rightarrow\infty,\]
and \[x(s_n)\rightarrow x_{\omega}(t_1),\quad u(s_n)\rightarrow u_{\omega}(t_1),\quad x(t_n)\rightarrow x_{\omega}(t_2),\quad u(t_n)\rightarrow u_{\omega}(t_2).\]
Since $(x(\cdot),u(\cdot)):[0,+\infty)\ri M\times\R$ is positively semi-static, we have
\[u(s_{n+1})=\inf_{s>0}h_{x(t_n),u(t_n)}(x(s_{n+1}),s).\]
Take a limiting passage as $n\rightarrow\infty$, then (\ref{444}) can be verified by Proposition \ref{aclippinf}.

This completes the proof of Theorem \ref{namss}(2).

\subsection{Closedness of $\tilde{\A}$}

	Let $\{(x_n,u_n,p_n)\}_n\subseteq \tilde{\mathcal{A}}$ with
	$(x_n,u_n,p_n)\to (x_0,u_0,p_0)$ as $n\to\infty$. It is sufficient to show that $(x_0,u_0,p_0)\in \tilde{\mathcal{A}}$.
 Let
 \[(x_0(t),u_0(t),p_0(t)):=\Phi_t(x_0,u_0,p_0).\]
 Note that $\tilde{\mathcal{A}}\subseteq \tilde{\N}$. It follows from Proposition \ref{ooo} and the continuity and differentiability of solutions of ordinary differential equations with respect to initial values,	 $(x_0(t),u_0(t),p_0(t))$ exists for all $t\in \mathbb{R}$. Thus, we only need to prove $(x_0(\cdot),u_0(\cdot)):\R\ri M\times\R$ is static.

	For each $n\in \mathbb{N}$, let $(x_n(t),u_n(t),p_n(t))$ denote the solution of equations (\ref{c}) with initial value $(x_n,u_n,p_n)$. For each $t_1$, $t_2\in\mathbb{R}$, take $K>0$ such that $t_1,t_2\in[-K,K]$. By a similar argument as the global characterizations of Ma\~{n}\'{e} sets,
	we have \[\inf_{s>0}h_{x_0(t_1),u_0(t_1)}(x_0(t_2),s)\geq u_0(t_2).\]

On the other hand, by Proposition \ref{lem3.1}, $h_{x_n(t_1),u_n(t_1)}(x_n(t_2),+\infty)= u_n(t_2)$. Combining with Proposition \ref{aclipp}, we have
\[h_{x_0(t_1),u_0(t_1)}(x_0(t_2),+\infty)= u_0(t_2),\]
which implies $\inf_{s>0}h_{x_0(t_1),u_0(t_1)}(x_0(t_2),s)\leq u_0(t_2)$.

	 Since $(x_n(t),u_n(t))$ is globally minimizing, then it is clear to see that $(x_0(t),u_0(t))$ is also a globally minimizing curve. Therefore, $(x_0(\cdot),u_0(\cdot)):\R\ri M\times\R$ is static.

\subsection{Inclusion relations}The
 Mather set is defined by
\[\tilde{\mathcal{M}}=\mathrm{cl}\left(\bigcup_{\mu\in \mathfrak{M}}\text{supp}(\mu)\right),\]
where $\mathfrak{M}$ denotes the set of Mather measures.
By definition,  $\tilde{\A}\subseteq \tilde{\N}$. It remains to show
\[\tilde{\mathcal{M}}\subseteq \tilde{\A}.\]
By definition, $\tilde{\A}$ is closed. Note that Mather measures are  invariant Borel probabilities measures. Given $\mu\in \mathfrak{M}$. By the Poincar\'{e} recurrence theorem,  one can find a set $A\subseteq T^*M\times\R$ of total $\mu$-measure such that if $(x_0,u_0,p_0)\in A$, then there exist $\{t_m\}_{m\in \mathbb{N}}$  such that
\[d\left((x_0,u_0,p_0),\Phi_{t_m}(x_0,u_0,p_0)\right)\ri 0\quad \text{as}\ \ t_m\ri +\infty,\]
where $d(\cdot,\cdot)$ denotes the distance induced by the Riemannian metric on $T^*M\times\R$.
Let
\[(x(t),u(t),p(t)):=\Phi_t(x_0,u_0,p_0),\quad \forall t\in \R.\]
Since $\tilde{\A}$ is closed and $A$ is dense in supp$(\mu)$, then we only need to show that  $(x(\cdot),u(\cdot)):\mathbb{R}\to M\times\mathbb{R}$ is a static curve.
By the definition of $\tilde{\mathcal{M}}$, $(x(\cdot),u(\cdot)):\mathbb{R}\to M\times\mathbb{R}$ is semi-static.  Let $p(t):=\frac{\partial L}{\partial \dot{x}}(x(t),u(t),\dot{x}(t))$. By assumption,
\[d\left((x(t_2),u(t_2),p(t_2)),\Phi_{t_m}(x(t_2),u(t_2),p(t_2))\right)\ri 0\quad \text{as}\ \ t_m\ri +\infty.\]
We only need to prove 	
\begin{equation}\label{uhxtt}
	u(t_2)=\inf_{s>0}h_{x(t_1),u(t_1)}(x(t_2),s).
\end{equation}

 Without loss of generality, we assume $t_1\geq t_2$. Since $(x(\cdot),u(\cdot)):\mathbb{R}\to M\times\mathbb{R}$ is semi-static, we have
	\begin{equation}\label{uhxtt77}
	u(t_1)=\inf_{s>0}h_{x(t_2),u(t_2)}(x(t_1),s).
\end{equation}
 Let $\Delta:=t_1-t_2$. By (\ref{uhxtt77}), if $t_m>\Delta$, we have
\[u(t_2+t_m)=\inf_{s>0}h_{x(t_1),u(t_1)}(x(t_2+t_m),s).\]
Let $t_m\ri +\infty$. It follows from Proposition \ref{aclippinf} that
\[u(t_2)=\inf_{s>0}h_{x(t_1),u(t_1)}(x(t_2),s),\]
which together with (\ref{uhxtt77}) implies
 $(x(\cdot),u(\cdot)):\mathbb{R}\to M\times\mathbb{R}$ is a static curve.

 This completes the proof of Theorem \ref{namss}(3).

\section{More on Aubry sets}\label{maset}

This part is devoted to proving Theorem \ref{crogrpccc} and Theorem \ref{grpaord}.

\subsection{Comparison property}
By contradiction, we assume that there exists $x_0\in M$ such that $v_-(x_0)<u_-(x_0)$. Let $\gm:(-\infty,0]\rightarrow M$ be a $(v_-, L, 0)$-calibrated curve with $\gm(0)=x_0$. Let
\[F(s):=u_-(\gm(s))-v_-(\gm(s)).\]
By assumption, we have $F(0)>0$. Note that $F(s)$ is continuous, we have a dichotomy as follows.
\begin{itemize}
\item [(1)] There exists $s_0<0$ such that $F(s_0)=0$ and $F(s)>0$ for any $s\in (s_0,0]$;
\item [(2)] $F(s)>0$ for all $s\in (-\infty, 0]$.
\end{itemize}

For Item (1),  we have
\[v_-(\gm(s))-v_-(\gm(s_0))=\int_{s_0}^sL(\gm(\tau),v_-(\gm(\tau)),\dot{\gm}(\tau))d\tau,\]
\[u_-(\gm(s))-u_-(\gm(s_0))\leq\int_{s_0}^sL(\gm(\tau),u_-(\gm(\tau)),\dot{\gm}(\tau))d\tau.\]
It follows that
\[F(s)\leq \lambda\int_{s_0}^sF(\tau)d\tau,\]
which implies $F(s)=0$ for any $s\in [s_0,0]$. This contradicts $F(0)>0$. It means $u_-\leq v_-$ on $M$.

It remains to prove this theorem for Item (2). Let  $u_0:=v_-(x_0)$, $p_0:=\frac{\partial L}{\partial \dot{x}}(x_0,u_0,\dot{\gm}(0)_-)$, where $\dot{\gm}(0)_-$ denotes the left derivative of $\gm(t)$ at $t=0$. According to Proposition \ref{pr1188}, let $x(t):=\gm(t)$, $u(t):=v_-(\gm(t)))$, then  $(x(\cdot),u(\cdot)):(-\infty,0]\ri M\times\R$ is negatively semi-static. Let $\alpha(x_0,u_0,p_0)$ be the $\alpha$-limit set of $(x_0,u_0,p_0)$. By Theorem \ref{namss}(2), we have
\[\alpha(x_0,u_0,p_0)\subset \tilde{\A}_{v_-}.\]
By definition, one can find a subsequence $\{s_n\}_{n\in \mathbb{N}}$ such that $\gm(s_n)\ri \bar{x}\in {\A}_{v_-}$ as $s_n\ri -\infty$. Since $u_-\leq v_-$ on $\A_{v_-}$, then $u_-(\bar{x})\leq v_-(\bar{x})$, which yields $\lim_{s_n\ri-\infty}F(s_n)\leq 0$.
Note that
\[v_-(x_0)-v_-(\gm(s))=\int_{s}^0L(\gm(\tau),v_-(\gm(\tau)),\dot{\gm}(\tau))d\tau,\]
\[u_-(x_0)-u_-(\gm(s))\leq\int_{s}^0L(\gm(\tau),u_-(\gm(\tau)),\dot{\gm}(\tau))d\tau.\]
By (H3), there holds for each $s\in (-\infty,0]$,
	\begin{equation}\label{crduvuvv}
u_-(x_0)-v_-(x_0)\leq u_-(\gm(s))-v_-(\gm(s)).
\end{equation}
In particular, (\ref{crduvuvv}) still holds for $\{s_n\}_{n\in \mathbb{N}}$. Let $s_n\ri -\infty$, we have $u_-(x_0)-v_-(x_0)\leq 0$, which contradicts the assumption $v_-(x_0)<u_-(x_0)$. This completes the proof of Theorem \ref{crogrpccc}.

\begin{remark}\label{macom}
By the definition of the Mather set, the $\alpha$-limit set of $(x_0,u_0,p_0)$ carries Mather measures. Consequently, there exists a sequence $t_n\to +\infty$ such that $\gm(t_n)\to \tilde{x}\in \mathcal{M}_{v_-}$, where
\[\mathcal{M}_{v_-}=\pi\big(\tilde{\mathcal{M}}\cap G_{v_-}\big).\]
 Given  $u_-,v_-\in \cS_-$. Following from the same argument as the proof of Comparison property,  if $u_-\leq v_-$ on $\mathcal{M}_{v_-}$, then $u_-\leq v_-$ on $M$. Moreover, if $u_-=v_-$ on $\mathcal{M}_{u_-}\cup\mathcal{M}_{v_-}$, then $u_-=v_-$ on $M$.
\end{remark}

\subsection{Static curves and positively semi-static curves}
As a preparation to prove the injectivity of $\Pi:\rho(\bar{\mathcal{A}})\to M$, we show that for certain minimizing orbits $(x(\cdot),p(\cdot),u(\cdot)):\R\ri T^*M\times\R$, $p(t)$ is uniquely determined by $(x(t),u(t))$ for each $t\in\R$.

\begin{lemma}\label{lem3.2}
	If $(x,u,p_1)\in \tilde{\mathcal{A}}$, $(x,u,p_2)\in \tilde{\N}^+$,   then $p_1=p_2$.
\end{lemma}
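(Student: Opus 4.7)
My plan is to combine the static orbit $(X(t),U(t),P(t)):=\Phi_t(x,u,p_1)$ through $(x,u,p_1)$ with the positively semi-static orbit $(Y(t),V(t),Q(t)):=\Phi_t(x,u,p_2)$ through $(x,u,p_2)$ and argue that both orbits must be characteristics of the same forward weak KAM solution; this will force $p_1=p_2$ via uniqueness of the semi-flow on a smooth graph, together with the strict convexity of $L$ in $\dot x$.

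For the setup, by Theorem \ref{88996}(2) and the definition of $\tilde{\A}$ I pick a conjugate pair $(v_-,v_+)\in\cS_-\times\cS_+$ with $(x,u,p_1)\in\tilde{\A}_{v_-}=\tilde{\I}_{v_-}$; then $x\in\I_{v_-}$, and by Lemma \ref{iinv} both $v_-$ and $v_+$ are classically differentiable at $x$ with $v_\pm(x)=u$ and $Dv_\pm(x)=p_1$. The entire static orbit $X$ lies in $\A_{v_-}\subseteq\I_{v_-}$, so that $v_+(X(t))=U(t)$ for every $t\in\R$. By Theorem \ref{88996}(1) I also pick $\tilde v_+\in\cS_+$ with $\tilde v_+(x)=u$ and $p_2\in D^\ast\tilde v_+(x)$; the orbit $(Y,V,Q)$ is $(\tilde v_+,L,0)$-calibrated on $[0,+\infty)$, whence $V(t)=\tilde v_+(Y(t))$ for every $t\geq 0$. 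The forward weak KAM super-solution inequality for $v_+$, applied along $Y$ with the matching $V(0)=u=v_+(Y(0))$, immediately yields $V(t)\geq v_+(Y(t))$ for every $t\geq 0$.

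The central step is the reverse inequality $V(t)\leq v_+(Y(t))$ (equivalently, $\tilde v_+(Y(t))\leq v_+(Y(t))$ since $V=\tilde v_+\circ Y$). Once this is in hand, the equality $V(t)=v_+(Y(t))$ on $[0,+\infty)$ places the whole orbit $(Y(t),V(t),Q(t))$ inside $G_{v_+}$; since $v_+$ is classically differentiable at $x=Y(0)$, the uniqueness of the semi-flow on $G_{v_+}$ (see \cite[Theorem 1.1]{WWY2}) forces $Q(0)=Dv_+(x)$, that is $p_2=p_1$ and equivalently $\dot Y(0)=\dot X(0)$.

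The principal obstacle is this reverse inequality. I plan to attack it by invoking the strictly stronger (static, i.e.\ bidirectional) nature of the orbit through $(x,u,p_1)$: apply Lemma \ref{ke} to the negatively semi-static half $(X,U)|_{(-\infty,0]}$ to produce $w_\infty\in\cS_-$ calibrated along $X$ (so $w_\infty(x)=u$, and by Lemma \ref{iinv} $Dw_\infty(x)=p_1$); then combine the Peierls barrier identity of Proposition \ref{lem3.1} (available because $X$ is static), the monotonicity of $h$ in the initial $u$-argument (non-decreasing in $u_0$ because $\partial H/\partial u\geq 0$), and the comparison property (Theorem \ref{crogrpccc}) to rule out any forward characteristic of $\tilde v_+$ through $(x,u)$ escaping the graph of the canonical forward weak KAM solution $v_+$ distinguished by the static orbit. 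The delicate point is precisely that the positively semi-static property of $Y$ alone only controls $V$ in terms of $\tilde v_+$, whereas the conclusion demands controlling $V$ in terms of $v_+$; overcoming this gap is where the staticity, and not merely semi-staticity, of the orbit through $(x,u,p_1)$ must play its essential role.
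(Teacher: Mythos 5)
Your proposal does not close the proof: the step you yourself flag as "the principal obstacle" --- the reverse inequality $V(t)\leq v_+(Y(t))$, i.e.\ that the forward orbit through $(x,u,p_2)$ is calibrated by the particular forward solution $v_+$ conjugate to $v_-$ rather than merely by some $\tilde v_+\in\cS_+$ --- is left as a plan, and it is essentially equivalent to the conclusion itself (once $V\equiv v_+\circ Y$ one recovers $p_2=Dv_+(x)=p_1$, and conversely $p_1=p_2$ immediately gives $V\equiv v_+\circ Y$). The tools you list do not deliver it: Theorem \ref{crogrpccc} compares two \emph{backward} solutions on a projected Aubry set and says nothing about two forward solutions that merely agree at one point, Lemma \ref{ke} only reconstructs a backward solution calibrating the static curve (essentially giving back $v_-$), and monotonicity of $h$ in $u_0$ together with Proposition \ref{lem3.1} gives no leverage on $\tilde v_+$ versus $v_+$ along $Y(t)$ for $t>0$. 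There is also a secondary slip in the endgame: even granting $V(t)=v_+(Y(t))$, you cannot invoke "uniqueness of the semi-flow on $G_{v_+}$" to get $Q(0)=Dv_+(x)$, because membership of $(x,u,p_2)$ in $G_{v_+}$ is exactly what is in question; what the equality actually gives is that $Y$ is $(v_+,L,0)$-calibrated on $[0,+\infty)$, and one must then use a forward analogue of Lemma \ref{diffe11} together with the differentiability of $v_+$ at $x$ (Lemma \ref{iinv}) to identify $Q(0)$ with $Dv_+(x)$.

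The paper avoids weak KAM solutions altogether at this point and argues directly on action functions. Fix $\delta>0$, let $(x_1(t),u_1(t),p_1(t))=\Phi_t(x,u,p_1)$ and $(x_2(t),u_2(t),p_2(t))=\Phi_t(x,u,p_2)$, and compare $h_{x_1(-\delta),u_1(-\delta)}(x_2(\delta),2\delta)$ with $h_{x,u}(x_2(\delta),\delta)$: the Markov property and $h_{x_1(-\delta),u_1(-\delta)}(x,\delta)=u$ give "$\leq$", and a deficit $\Delta>0$ is excluded by combining the \emph{staticity} of the backward orbit (choose $s_0$ with $h_{x,u}(x_1(-\delta),s_0)$ within $\eps$ of $u_1(-\delta)$), Proposition \ref{nonehh}, and the positive semi-staticity of $(x_2,u_2)$, which together force $u_2(\delta)\leq u_2(\delta)-\Delta/2$. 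Equality then makes the concatenation of $x_1|_{[-\delta,0]}$ and $x_2|_{[0,\delta]}$ a minimizer of $h_{x_1(-\delta),u_1(-\delta)}(x_2(\delta),2\delta)$ by Proposition \ref{Mar-new}, hence $C^1$ at the junction, and $p_1=\frac{\partial L}{\partial \dot x}(x,u,\dot\gamma(0))=p_2$. If you want to salvage your scheme, the honest content you must supply is precisely this concatenation estimate (or an equivalent one); without it the proposal has a genuine gap.
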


\begin{proof}
For each $t\in \R$, let \[(x_1(t),u_1(t),p_1(t)):=\Phi_t(x,u,p_1).\]For each $t\geq 0$, let  \[(x_2(t),u_2(t),p_2(t)):=\Phi_t(x,u,p_2).\]

Since  $(x,u,p_2)\in \tilde{\N}^+$, then $(x_2(t),u_2(t))$ is positively semi-static.
Fix $\delta>0$, by the Markov property, we have
\[h_{x_1(-\delta),u_1(-\delta)}(x_2(\delta),2\delta)=\inf_{y\in M}h_{y,h_{x_1(-\delta),u_1(-\delta)}(y,\delta)}(x_2(\delta),\delta).\]
Note that
\[h_{x_1(-\delta),u_1(-\delta)}(x,\delta)=u.\]
	It follows that
\[h_{x_1(-\delta),u_1(-\delta)}(x_2(\delta),2\delta)\leq h_{x,u}(x_2(\delta),\delta).\]
We assert that the equality holds. If the assertion is true, then by Proposition \ref{Mar-new}, the curve defined by	
	\[
	\gamma(\sigma):=\left\{\begin{array}{ll}
	x_1(\sigma-\delta), \quad\sigma\in[0,\delta],\\
	x_2(\sigma-\delta),\,\quad\sigma\in[\delta,2\delta],
	\end{array}\right.
	\]
	is a minimizer of $h_{x_1(-\delta),u_1(-\delta)}(x_2(\delta),2\delta)$ and thus is of class $C^1$. Thus,
\[p_1=\frac{\partial L}{\partial \dot{x}}(x,u,\dot{\gm}(0))=p_2.\]

It remains to verify the assertion. By contradiction, we assume that  there exists $\Delta>0$ such that	
	\[
	h_{x_1(-\delta),u_1(-\delta)}(x_2(\delta),2\delta)=h_{x,u}(x_2(\delta),\delta)-\Delta.
	\]

By assumption,  for each $\eps>0$, one can find $s_0>0$ such that
\[|h_{x,u}(x_1(-\delta),s_0)-u_1(-\delta)|\leq \eps.\]
From Proposition \ref{nonehh},
\[|h_{x_1(-\delta),h_{x,u}(x_1(-\delta),s_0)}(x_2(\delta),2\delta)-h_{x_1(-\delta),u_1(-\delta)}(x_2(\delta),2\delta)|\leq \eps.\]
It follows that
\begin{align*}
u_2(\delta)&=\inf_{\tau>0}h_{x,u}(x_2(\delta),\tau)\\
&\leq h_{x,u}(x_2(\delta),s_0+2\delta)\\
&\leq h_{x_1(-\delta),h_{x,u}(x_1(-\delta),s_0)}(x_2(\delta),2\delta)\\
&\leq h_{x_1(-\delta),u_1(-\delta)}(x_2(\delta),2\delta)+\eps\\
&=h_{x,u}(x_2(\delta),\delta)-\Delta+\eps.
\end{align*}
Note that $\Delta$ is independent of $\eps$, we have
	\[
	u_2(\delta)\leq h_{x,u}(x_2(\delta),\delta)-\frac{\Delta}{2}=u_2(\delta)-\frac{\Delta}{2},
	\]
	which is a contradiction.
\end{proof}

\subsection{Graph property of the Aubry set}

Lemma \ref{lem3.2} implies that $p$-component is uniquely determined by $(x,u)\in M\times\R$. It follows that the standard projection from $\tilde{\mathcal{A}}$ to $M\times\R$  is injective. In the following, we move one more step to show that $u$-component can be neglected if we consider the dual Aubry set $\bar{\mathcal{A}}$ in $TM\times\R$. Namely, $\dot{x}$-component is uniquely determined by $x\in M$. Let us recall that $\rho:TM\times\R\ri TM$ and $\Pi:TM\ri M$ denote the standard projections.

\begin{lemma}\label{kkkee}
	Let $(x(\cdot),u(\cdot)):\R\ri M\times\R$ be a static curve. Let $u_0:=u(0)$, $x_0:=x(0)$ and
	\[v(t):=\inf_{\tau>0}h_{x_0,v_0}(x(t),\tau).\]
	If $v_0\geq u_0$, and $h_{x_0,v_0}(x_0,+\infty)=v_0$, then  $(x(\cdot),v(\cdot)):\R\ri M\times\R$ is also static.
\end{lemma}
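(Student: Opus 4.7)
My plan is to verify that $(x(\cdot), v(\cdot))$ satisfies the two properties that characterize a static curve: the Peierls identity $v(t_2) = h_{x(t_1),v(t_1)}(x(t_2),+\infty)$ for all $t_1,t_2\in\R$ (Proposition \ref{lem3.1}) together with calibration of $v$ along $x$.

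I would first show $v(0) = v_0$. The inequality $v(0)\leq v_0$ is immediate from $v(0) \leq h_{x_0,v_0}(x_0,+\infty) = v_0$. For the reverse, I argue by contradiction: if $h_{x_0,v_0}(x_0,\tau^*) \leq v_0 - \epsilon$ for some $\tau^*,\epsilon > 0$, iterating the Markov bound together with the quantitative Lipschitz estimate $h_{x_0,u-\eta}(x_0,\tau^*) \leq h_{x_0,u}(x_0,\tau^*) - \eta e^{-\lambda\tau^*}$ (a consequence of $-\lambda \leq \partial L/\partial u \leq 0$ under \textbf{(H3)}) produces a telescoping decay forcing $\liminf_n h_{x_0,v_0}(x_0, n\tau^*) < v_0$, contradicting $h_{x_0,v_0}(x_0,+\infty) = v_0$.

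Next, using the Markov inequality $h_{x_0,v_0}(x(t_2), s+\tau) \leq h_{x(t_1), h_{x_0,v_0}(x(t_1),\tau)}(x(t_2), s)$, selecting $\tau_n$ with $h_{x_0,v_0}(x(t_1), \tau_n) \to v(t_1)$, and using uniform Lipschitz continuity of $h$ in its initial potential argument, I would obtain $v(t_2) \leq \inf_{s>0} h_{x(t_1), v(t_1)}(x(t_2), s) \leq h_{x(t_1), v(t_1)}(x(t_2), +\infty)$. The reverse inequality, the heart of the proof, exploits the critical-level hypothesis: because $h_{x_0,v_0}(x_0,+\infty) = v_0$, there exist near-loops at $x_0$ of arbitrarily long duration with excess cost tending to zero; concatenating such a loop with segments of the static curve $x(\cdot)$ produces approximate minimizers of $h_{x_0,v_0}(x(t_2), \tau)$ that pass through $x(t_1)$ at value arbitrarily close to $v(t_1)$, transporting the critical-level property along the $\Phi_t$-invariant static orbit and forcing equality.

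The calibration $v(t_2) - v(t_1) = \int_{t_1}^{t_2} L(x(s), v(s), \dot x(s))\,ds$ then follows because $x(\cdot)|_{[t_1,t_2]}$ itself realizes $h_{x(t_1),v(t_1)}(x(t_2), t_2-t_1)$ up to arbitrary precision, forcing $v$ to coincide with the ODE solution along $x$; combined with the Peierls identity, $(x,v)$ is static. The main obstacle is the reverse direction in the Peierls identity: because under \textbf{(H3)} the backward Lax--Oleinik semigroup $T_t^-$ is not a contraction, the standard Fathi-style convergence arguments for Peierls barriers do not apply directly; the workaround is to use the critical-level hypothesis $h_{x_0,v_0}(x_0,+\infty) = v_0$ as a form of Peierls recurrence at $x_0$ that propagates to the rest of the static orbit via Markov concatenation with the given static curve.
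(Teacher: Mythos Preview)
Your outline aims in the right direction but misses the structural identity that drives the paper's proof and, without it, your ``reverse inequality'' and calibration steps do not close.

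The paper's argument begins by establishing
\[
v(t)-v_0 \;=\; u(t)-u_0 \qquad\text{for all } t\in\R,
\]
using Proposition~\ref{nonehh} for the inequality $v(t)-u(t)\le v_0-u_0$ and the hypothesis $h_{x_0,v_0}(x_0,+\infty)=v_0$ (via a Markov--monotonicity iteration) for the opposite one. You only treat the special case $t=0$. The general identity is what allows the paper to transfer the staticity of $(x,u)$ to the shifted level: from $\inf_{\tau}h_{x(t_1),u(t_1)}(x_0,\tau)=u_0$ and $v(t_1)-u(t_1)\le v_0-u_0$, Proposition~\ref{nonehh} gives $\inf_{\tau}h_{x(t_1),v(t_1)}(x_0,\tau)\le v_0$, and then Markov yields $\inf_{s}h_{x(t_1),v(t_1)}(x(t_2),s)\le v(t_2)$. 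This is the reverse inequality you call ``the heart of the proof.''

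Your proposed route for that inequality---concatenate a near-loop at $x_0$ with a segment of $x(\cdot)$ to build an approximate minimizer of $h_{x_0,v_0}(x(t_2),\tau)$ passing through $x(t_1)$ at value close to $v(t_1)$---does not work as stated. The value carried along $x(\cdot)$ starting from $v_0$ obeys an ODE at the \emph{shifted} level, and under (H3) you only know that this value lies in $[u(t_1)+e^{-\lambda t_1}(v_0-u_0),\,u(t_1)+(v_0-u_0)]$; there is no reason it equals $v(t_1)$ unless you already know $v(t_1)=u(t_1)+(v_0-u_0)$. Moreover your concatenation runs $x_0\to x(t_1)\to x(t_2)$, which by Markov gives the \emph{easy} direction $v(t_2)\le\inf_s h_{x(t_1),v(t_1)}(x(t_2),s)$ (which you had already obtained); the reverse requires routing $x(t_1)\to x_0\to x(t_2)$, and for that you need to bound $h_{x(t_1),v(t_1)}(x_0,\cdot)$ from above by $v_0$, i.e.\ exactly the shifted Peierls estimate that the identity $v-u=\text{const}$ supplies.

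The same gap reappears in your calibration step: claiming that $x|_{[t_1,t_2]}$ realizes $h_{x(t_1),v(t_1)}(x(t_2),t_2-t_1)$ is precisely the statement that $(x,v)$ is globally minimizing, which needs justification. The paper obtains it by combining the already-proved $\inf$-identity with Proposition~\ref{nonehh} and the equality $v(t_2)-v(t_1)=u(t_2)-u(t_1)$; you have no substitute for this. Two smaller points: your quantitative estimate $h_{x_0,u-\eta}(x_0,\tau)\le h_{x_0,u}(x_0,\tau)-\eta e^{-\lambda\tau}$ is not provided in the paper and is unnecessary (plain Markov plus monotonicity already force $h_{x_0,v_0}(x_0,n\tau^*)\le v_0-\epsilon$ for all $n$, contradicting the limit $v_0$); and ``Peierls identity plus calibration'' is not the definition of static used here---you must verify $v(t_2)=\inf_{s>0}h_{x(t_1),v(t_1)}(x(t_2),s)$, not the $+\infty$ version.
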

The proof of Lemma \ref{kkkee} is standard, which is given in Appendix \ref{prkkkee}.
Lemma \ref{kkkee} implies
	$\Pi:\rho(\bar{\mathcal{A}})\to M$ is  injective.
In fact, let $(x_i(\cdot),u_i(\cdot)):\R\ri M\times\R$ be static, where $i=1,2$. We  prove that if $x_1(0)=x_2(0)=x_0$, then $x_1(t)=x_2(t)$ for all $t>0$, which implies $\dot{x}_1(t)=\dot{x}_2(t)$ for each $t\geq 0$. Let $u_1:=u_1(0)$, $u_2:=u_2(0)$ and $\bar{u}_2(t)=\inf_{s>0}h_{x_0,u_2}(x_1(t),s)$. Then
\[\bar{u}_2(0)=\inf_{s>0}h_{x_0,u_2}(x_1(0),s)=\inf_{s>0}h_{x_2(0),u_2(0)}(x_2(0),s)=u_2.\]

We assume $u_2\geq u_1$, the other case is similar. By Proposition \ref{lem3.1}, $h_{x_0,u_2}(x_0,+\infty)=u_2$.
	By Lemma \ref{kkkee}, $(x_1(t),\bar{u}_2(t))$ is static, then there is a solution of equations (\ref{c}) denoted by $(x_1(t),\bar{u}_2(t),\bar{p}_2(t))$ with
\[\bar{p}_2(t):=\frac{\partial L}{\partial \dot{x}}(x_1(t),\bar{u}_2(t),\dot{x}_1(t)),\]
 and $(x_0,u_2,\bar{p}_2(0))\in \tilde{\mathcal{A}}$. Since $(x_2(t),u_2(t))$ is static, there is a solution of equations (\ref{c}) denoted by $(x_2(t),u_2(t),p_2(t))$ with
\[{p}_2(t):=\frac{\partial L}{\partial \dot{x}}(x_2(t),{u}_2(t),\dot{x}_2(t)),\]
 and  $(x_0,u_2,p_2(0))\in \tilde{\mathcal{A}}$.
	By Lemma \ref{lem3.2}, we have $\bar{p}_2(0)=p_2(0)$, which implies that  for all $t\geq 0$, \[(x_1(t),\bar{u}_2(t),\bar{p}_2(t))=(x_2(t),u_2(t),p_2(t)).\] In particular, $\dot{x}_1(0)=\dot{x}_2(0)$. Namely,  	$\Pi:\rho(\bar{\mathcal{A}})\to M$ is  injective.

\subsection{Partially ordered relation}

Given $\bar{x}\in \mathcal{A}_{u_-}$. We  prove $\bar{x}\in \mathcal{A}_{v_-}$. By definition, one can find a static curve  $(x(\cdot),u(\cdot)):\R\ri M\times\R$ such that $x(0)=\bar{x}$ and  $u(t)=u_-(x(t))$ for each $t\in \R$. Let $v(t):=v_-(x(t))$ for each $t\in \R$. It suffices to show that $(x(\cdot),v(\cdot)):\R\ri M\times\R$ is also a static curve.

Since $u_-\leq v_-$ on $\A_{u_-}$, then $u(0)\leq v(0)$. It follows that for each $s>0$, we have
		\begin{equation}\label{ww44}
h_{x(0),v(0)}(x(0),s)-h_{x(0),u(0)}(x(0),s)\leq v(0)-u(0).
	\end{equation}
By Lemma \ref{lem3.1}, 	$h_{x(0),u(0)}(x(0),+\infty)= u(0)$, then
	$h_{x(0),v(0)}(x(0),+\infty)\leq v(0)$. Since $v_-\in \mathcal{S}_-$, then for each $t>0$,
		\begin{equation}\label{ww45}
	v(0)=v_-(x(0))=T_t^-v_-(x(0))\leq h_{x(0),v(0)}(x(0),t),
	\end{equation}	
	which gives rise to $h_{x(0),v(0)}(x(0),+\infty)= v(0)$.

Let $\bar{v}(t):=\inf_{\tau>0}h_{x(0),v(0)}(x(t),\tau)$. By (\ref{ww44}), $\bar{v}(0)\leq v(0)$, which combining with (\ref{ww45}) yields $\bar{v}(0)=v(0)$. By Lemma \ref{kkkee}, $(x(t),\bar{v}(t))$ is static and
	\begin{equation}\label{8ii}
	\bar{v}(t)-\bar{v}(0)=\bar{v}(t)-{v}(0)=u(t)-u(0).
	\end{equation}

 It remains to show that $\bar{v}(t)=v(t)$.
	Since $v_-\in \mathcal{S}_-$, then $v(t)=v_-(x(t))\leq \bar{v}(t)$.

We  prove $v(t)\geq  \bar{v}(t)$ in the following.
By contradiction, we assume that there exists $t_0\in \R$ such that $v(t_0)<\bar{v}(t_0)$. Note that $u_-\leq v_-$ on $\A_{u_-}$, we have $u(t_0)\leq v(t_0)$.  For each $s>0$, we have
		\begin{equation}\label{ww54}
h_{x(t_0),v(t_0)}(x(t_0),s)-h_{x(t_0),u(t_0)}(x(t_0),s)\leq v(t_0)-u(t_0).
	\end{equation}
 Since $h_{x(t_0),u(t_0)}(x(t_0),+\infty)=u(t_0)$, we have
	$h_{x(t_0),v(t_0)}(x(t_0),+\infty)\leq v(t_0)$.  On the other hand, since $v_-\in \mathcal{S}_-$, then for each $t>0$,
			\begin{equation}\label{ww55}
	v(t_0)=v_-(x(t_0))=T_t^-v_-(x(t_0))\leq h_{x(t_0),v(t_0)}(x(t_0),t),
	\end{equation}
	which gives rise to $h_{x(t_0),v(t_0)}(x(t_0),+\infty)= v(t_0)$. Let \[\tilde{v}(t):=\inf_{\tau>0}h_{x(t_0),v(t_0)}(x(t),\tau).\] By (\ref{ww54}) and (\ref{ww55}), we have $\tilde{v}(t_0)=v(t_0)$. By Lemma \ref{kkkee}, $(x(t),\tilde{v}(t))$ is also static and $\tilde{v}(t)-\tilde{v}(t_0)=u(t)-u(t_0)$ for each $t\in \R$. Thus, we have
	\begin{equation}\label{9ii}
	\tilde{v}(0)-\tilde{v}(t_0)=u(0)-u(t_0).
	\end{equation}
By (\ref{8ii}), we have
	\begin{equation}\label{9ixxi}
	\bar{v}(t_0)-{v}(0)=u(t_0)-u(0),
	\end{equation}
which together with (\ref{9ii}) and $\tilde{v}(t_0)=v(t_0)$
 implies
\begin{equation}\label{7ii}
\bar{v}(t_0)-{v}(t_0)=v(0)-\tilde{v}(0).
\end{equation}
Note that $\bar{v}(t_0)>v(t_0)$, then $v(0)-\tilde{v}(0)>0$. On the other hand, using $v_-\in \mathcal{S}_-$ again,  for each $\tau>0$,
	\[
	v(t)=v_-(x(t))=T_{\tau}^-v_-(x(t))\leq h_{x(t_0),v(t_0)}(x(t),\tau),
	\]
which yields $v(t)\leq \tilde{v}(t)$ for each $t\in \R$. In particular, we have  $v(0)-\tilde{v}(0)\leq 0$, which contradicts $v(0)-\tilde{v}(0)>0$.

This completes the proof of Theorem \ref{grpaord}.


\section{Strongly static set}\label{ssaset}
In this part, we  prove Theorem \ref{conterstatic} and Proposition \ref{exppp}.
\begin{proposition}\label{pr4.10}
	Let $x_1$, $x_2\in M$ and $u_1$, $u_2\in \mathbb{R}$.
	If $u_2=\sup_{s>0}h^{x_1,u_1}(x_2,s)$, then $u_1=\inf_{s>0}h_{x_2,u_2}(x_1,s)$.
\end{proposition}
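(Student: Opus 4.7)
The plan is to prove the two inequalities $\inf_{s>0} h_{x_2,u_2}(x_1,s) \leq u_1$ and $\inf_{s>0} h_{x_2,u_2}(x_1,s) \geq u_1$ separately, exploiting a time-reversal duality between the minimal action $h_{\cdot,\cdot}$ and the maximal action $h^{\cdot,\cdot}$ coming from the Herglotz-type variational formulation of \eqref{c}. The two main ingredients are: (i) the Lipschitz dependence of $h_{x_2,u}(x_1,s)$ in the initial contact variable $u$ (Proposition \ref{nonehh}), and (ii) the monotonicity of $u\mapsto h_{x_0,u}(x,t)$ and $u\mapsto h^{x_0,u}(x,t)$ in the initial value, a direct consequence of (H3) applied to the Herglotz ODE.

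For the inequality $\inf_{s>0} h_{x_2,u_2}(x_1,s) \leq u_1$: since $u_2=\sup_{s>0}h^{x_1,u_1}(x_2,s)$, I would select a sequence $s_n>0$ with $h^{x_1,u_1}(x_2,s_n)\to u_2$ and take (approximate) maximizing curves $\gamma_n\colon[0,s_n]\to M$ joining $x_1$ to $x_2$. The core duality step asserts that along any such realizing curve, the forward Herglotz evolution from $(x_1,u_1)$ and the backward Herglotz evolution from $(x_2,h^{x_1,u_1}(x_2,s_n))$ are inverse to each other, so that $h_{x_2,h^{x_1,u_1}(x_2,s_n)}(x_1,s_n)\leq u_1$. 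Combined with the Lipschitz dependence of $h_{x_2,\cdot}(x_1,s_n)$ on its second argument, this yields $h_{x_2,u_2}(x_1,s_n)\leq u_1+o(1)$, from which the infimum bound follows.

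For the inequality $\inf_{s>0} h_{x_2,u_2}(x_1,s) \geq u_1$: I would argue by contradiction. Assume there exist $s_0>0$ and $\delta>0$ with $h_{x_2,u_2}(x_1,s_0)<u_1-\delta$, and take a minimizing curve $\gamma_0\colon[0,s_0]\to M$ with $\gamma_0(0)=x_2$, $\gamma_0(s_0)=x_1$ realizing this value. Applying the dual direction of the same identity, starting from $(x_2,u_2)$ and evolving backward along $\gamma_0$, produces a forward trajectory from $x_1$ to $x_2$ in time $s_0$ whose Herglotz endpoint strictly exceeds $u_2$; here the gap of size $\delta$ is preserved (or only compressed by a bounded factor) thanks to the monotonicity in (ii). This forces $h^{x_1,u_1}(x_2,s_0)>u_2$, contradicting the supremum defining $u_2$.

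The main obstacle is the precise formulation and verification of the duality step that converts the relation $h^{x_1,u_1}(x_2,s)=\tilde u_2$ into $h_{x_2,\tilde u_2}(x_1,s)=u_1$ along a common extremal. The cleanest derivation goes through the time-reversal symmetry of solutions $(x(\tau),u(\tau),p(\tau))$ of \eqref{c} (reversing both the time and the momentum coordinate) and checking that the Herglotz action integral transforms appropriately, so that a maximizer of $h^{x_1,u_1}(x_2,s)$ becomes a minimizer of $h_{x_2,\tilde u_2}(x_1,s)$ after reversal. Once this reversal identity is secured, the Lipschitz estimates of Proposition \ref{aclippinf} and Proposition \ref{nonehh} close both limiting arguments in a routine fashion.
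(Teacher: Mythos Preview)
Your plan is correct in spirit and uses the same two ingredients as the paper (the duality between $h$ and $h^{\cdot,\cdot}$, and the Lipschitz/monotonicity in the contact variable), but you are making the argument harder than it needs to be in one important respect: the ``main obstacle'' you identify --- converting $h^{x_1,u_1}(x_2,s)=\tilde u_2$ into $h_{x_2,\tilde u_2}(x_1,s)=u_1$ --- is precisely Proposition~\ref{relation}, stated in the appendix as a black box. There is no need to rederive it through time-reversal of \eqref{c} or to speak of ``common extremals''; the identity holds for every $s>0$ and every pair of endpoints. Once you use Proposition~\ref{relation} directly, both halves become short.

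In the paper's proof, the inequality $u_1\le\inf_{s>0}h_{x_2,u_2}(x_1,s)$ is a one-liner: from $u_2\ge h^{x_1,u_1}(x_2,s)$ and Proposition~\ref{relation} plus monotonicity one gets $h_{x_2,u_2}(x_1,s)\ge u_1$ for every $s$. Your contradiction argument for this direction works too, but it is less direct. For the reverse inequality the paper argues by contradiction using only Proposition~\ref{nonehh}: if $h_{x_2,u_2}(x_1,s)\ge u_1+\delta$ for all $s$, then $h_{x_2,u_2-\delta/2}(x_1,s)\ge u_1+\delta/2$, whence $h^{x_1,u_1}(x_2,s)\le u_2-\delta/2$ for all $s$, contradicting the supremum. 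Your sequence argument (take $s_n$ with $h^{x_1,u_1}(x_2,s_n)\to u_2$, apply Proposition~\ref{relation} to get $h_{x_2,h^{x_1,u_1}(x_2,s_n)}(x_1,s_n)=u_1$, then use Proposition~\ref{nonehh} to pass to the limit) is a perfectly valid and arguably cleaner alternative to this contradiction. Note also that Proposition~\ref{aclippinf} is not needed here; Proposition~\ref{nonehh} alone suffices.
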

\begin{proof}
	For each $s>0$, we have $u_2\geq h^{x_1,u_1}(x_2,s)$, then $u_1\leq h_{x_2,u_2}(x_1,s)$, which means
	\[
	u_1\leq \inf_{s>0}h_{x_2,u_2}(x_1,s).
	\]
	We assume by contradiction that there exists $\delta>0$ such that for each $s>0$,
	$h_{x_2,u_2}(x_1,s)\geq u_1+\delta$. Since
	\[
	h_{x_2,u_2}(x_1,s)-h_{x_2,u_2-\frac{\delta}{2}}(x_1,s)\leq u_2-(u_2-\frac{\delta}{2})=\frac{\delta}{2},
	\]
	then we have
	\[
	h_{x_2,u_2-\frac{\delta}{2}}(x_1,s)\geq u_1+\frac{\delta}{2},
	\]
	which implies
	\[
	u_2-\frac{\delta}{2}\geq h^{x_1,u_1+\frac{\delta}{2}}(x_2,s)\geq h^{x_1,u_1}(x_2,s).
	\]
	It contradicts $u_2=\sup_{s>0}h^{x_1,u_1}(x_2,s)$.
\end{proof}

By Proposition \ref{pr4.10}, we have $\tilde{\mathcal{S}}_s\subseteq \tilde{\mathcal{A}}$. Then, we  show $\tilde{\mathcal{S}}_s=\tilde{\mathcal{A}}$ if $H$ is independent of $u$.
In classical cases, the action function is defined by
\[h^s(x_0,x):=\inf_{\gm}\int_0^sL(\gm(\tau),\dot{\gm}(\tau))d\tau,\]
where the infimums are taken among the absolutely continuous curves $\gamma:[0,s]\rightarrow M$ with $\gm(0)=x_0$ and $\gm(s)=x$. In terms of the notations in contact cases,
if $H(x,u,p)$ is independent of $u$, then
\begin{equation}\label{hchch}
h_{x_0,u_0}(x,s)=u_0+h^s(x_0,x),\quad h^{x_0,u_0}(x,s)=u_0-h^s(x,x_0).
\end{equation}
In view of Proposition \ref{pr4.10}, we only need to prove if $u_1=\inf_{s>0}h_{x_2,u_2}(x_1,s)$, then $u_2=\sup_{s>0}h^{x_1,u_1}(x_2,s)$ for each $x_1$, $x_2\in M$ and $u_1$, $u_2\in \mathbb{R}$. That is a direct consequence of (\ref{hchch}).

 We complete the proof of Theorem \ref{conterstatic}.

\vspace{1em}

In the remaining part of this section, we  prove Proposition \ref{exppp}.
The contact Hamilton equation reads
\begin{align}\label{xjja}
\left\{
        \begin{array}{l}
        \dot{x}=p+V(x),\\
        \dot{p}=-pV'(x)-\lambda p,\\
        \dot{u}=p(p+V(x))-H(x,u,p).
         \end{array}
         \right.
\end{align}
Obviously, $u_-\equiv 0$ is the unique viscosity  solution of $H(x,u,Du)=0$. Note that if $x(t)$ satisfies $\dot{x}=V(x)$ for each $t\in \R$, then $(x(t),0,0)$ satisfies (\ref{xjja}). It follows that
\[\tilde{\A}=\cap_{t\leq 0}\Phi_t(G_{u_-})=G_{u_-}=\left\{(x,0,0)\ |\ x\in \mathbb{T}\right\}.\]

 We assert that
\[\tilde{\mathcal{S}}_s=\{(x_1,0,0),\ (x_2,0,0)\}.\]
By Legendre transformation,
\[L(x,u,v):=-\lambda u+\frac{1}{2}|v-V(x)|^2,\quad x\in \mathbb{T}.\]
 By definition, a direct calculation shows
\begin{align}\label{2-4hhh}
h^{x_0,u_0}(x,t)=e^{\lambda t}u_0-\inf_{\substack{\gamma(t)=x_0 \\  \gamma(0)=x } }\int_0^te^{\lambda s}\frac{1}{2}|\dot{\gm}(s)-V(\gm(s))|^2ds.
\end{align}
Then we have
\[0=\sup_{\tau>0}h^{x_i,0}(x_i,\tau),\quad i=1,2.\]
It follows that $\{(x_1,0,0), (x_2,0,0)\}\subseteq \tilde{\mathcal{S}}_s$.

\vskip 0.2cm

Let $[x_1,y_0)$ be a fundamental domain of $\mathbb{T}$. In order to complete the proof, it remains to show that
\[[x_1,y_0)\backslash \{x_1,x_2\}\cap {\mathcal{S}}_s=\emptyset.\]
Let
\[I:=[x_1,y_0),\ I_1:=(x_1,x_2),\ I_2:=(x_2,y_0).\]
Note that $\tilde{\mathcal{S}}_s$ is flow-invariant. Then we need to exclude the following three cases:
\begin{itemize}
\item [(1)] ${\mathcal{S}}_s=\{x_1,x_2\}\cup I_1$,
\item [(2)] ${\mathcal{S}}_s=\{x_1,x_2\}\cup I_2$,
\item [(3)] ${\mathcal{S}}_s=I$.
\end{itemize}
We  prove that Case (1) does not happen. The other cases are similar.

Let  $(x(\cdot),u(\cdot)):\R\ri I_1\times\R$  be a static curve. If Case (1) holds, then $(x(\cdot),u(\cdot)):\R\ri I_1\times\R$ is also strongly static. By \cite[Lemma 4.8]{WWY2}, $u(t)=u_-(x(t))=0$ for all $t\in \R$. Given $\eps_0>0$ small enough, take $x(t_1),x(t_2)\in [\frac{x_1+x_2}{2}-\eps_0,\frac{x_1+x_2}{2}+\eps_0]$ and $x(t_1)<x(t_2)$.
By definition,   there holds
	\begin{equation*}
	u(t_2)=\sup_{\tau>0}h^{x(t_1),u(t_1)}(x(t_2),\tau).
\end{equation*}
Since $x(t_1)\neq x(t_2)$,  by (H2), there exists $\delta>0$ such that
	\begin{equation*}
	u(t_2)=\sup_{\tau>\delta}h^{x(t_1),u(t_1)}(x(t_2),\tau).
\end{equation*}
 It follows from $u(t_1)=u(t_2)=0$ that
\begin{equation}\label{itagam}
0=\inf_{\tau>\delta}\inf_{\xi}\int_{0}^{\tau}e^{\lambda s}\frac{1}{2}|\dot{\xi}(s)-V(\xi(s))|^2ds,
\end{equation}
where $\xi$ is taken among all Lipschitz continuous curves with $\xi(0)=x(t_2)$ and $\xi(\tau)=x(t_1)$.
By the variational principle,  given $\tau>\delta$, the infimum is achieved at $\gm:[0,\tau]\ri M$, which is of class $C^1$. Let
\begin{align*}
v(s):=h^{x(t_1),0}(\gamma(s),\tau-s),\,\,\, p(s):=\frac{\partial L}{\partial \dot{x}}(\gamma(s),v(s),\dot{\gamma}(s)),
\end{align*}
Then $(\gm(s),v(s),p(s))$  satisfies equations \eqref{xjja} with
\begin{align*}
\gm(0)=x(t_2), \quad \gm(\tau)=x(t_1), \quad \lim_{s\rightarrow \tau^-}v(s)=0,
\end{align*}
which implies
\begin{equation}\label{psvv}
p(s)=\dot{\gm}(s)-V(\gm(s)),\quad \forall s\in (0,\tau).
\end{equation}
If there exists $s_0\in [0,\tau)$ such that $\dot{\gm}(s_0)<0$ and $\gm(s_0)\in [x(t_1),x(t_2)]$.  Based on the construction of $V(x)$, there exists $C_2>0$  independent of $\tau$ such that $V(x)>C_2>0$ for all $x\in [x(t_1),x(t_2)]\subset [\frac{x_1+x_2}{2}-\eps_0,\frac{x_1+x_2}{2}+\eps_0]$. By (\ref{psvv}),
\[p(s_0)=\dot{\gm}(s_0)-V(\gm(s_0))<-V(\gm(s_0))<-C_2.\]
Otherwise, one can find $s_1\in [0,\tau)$ such that $\dot{\gm}(s_1)>0$ and $\gm(s_1)\in [\frac{x_2+y_0}{2}-\eps_0,\frac{x_2+y_0}{2}+\eps_0]$.  Based on the construction of $V(x)$, there exists $C_3>0$  independent of $\tau$ such that $V(x)<-C_3<0$ for all $x\in [\frac{x_2+y_0}{2}-\eps_0,\frac{x_2+y_0}{2}+\eps_0]$. By (\ref{psvv}),
\[p(s_1)=\dot{\gm}(s_1)-V(\gm(s_1))>-V(\gm(s_1))>C_3>0.\]
It follows that for each $\tau>\delta$,
\[\inf_{\xi}\int_{0}^{\tau}e^{\lambda s}\frac{1}{2}|\dot{\xi}(s)-V(\xi(s))|^2ds=\int_{0}^{\tau}e^{\lambda s}\frac{1}{2}|\dot{\gm}(s)-V(\gm(s))|^2ds>C_4>0,\]
where $C_4$ is a constant independent of $\tau$. This contradicts (\ref{itagam}).

\section{Applications}\label{appl}
In this part, we prove Proposition \ref{exammmmpp} and Proposition \ref{exvnish}.

\subsection{ Proof of Proposition \ref{exammmmpp}}

\begin{lemma}\label{crogrp}
Given  $u_-,v_-\in \cS_-$, let \[w_-(x):=\min_{x\in M}\{u_-(x),v_-(x)\}.\] Then $w_-\in \cS_-$ and  both $u_-$ and $v_-$ are of class $C^{1,1}$ on $\A_{w_-}$. Moreover,  for each $x\in \A_{w_-}$,
\[\frac{\partial H}{\partial p}(x,u_-(x),Du_-(x))=\frac{\partial H}{\partial p}(x,v_-(x),Dv_-(x)).\]  In particular, if
\begin{equation*}
H(x,u,p):=f(x,u)+h(x,p),
\end{equation*}then
$Du_-=Dv_-$ on $\A_{w_-}$.
\end{lemma}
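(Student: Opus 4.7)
The plan is to proceed in three steps: first verify $w_-$ is a backward weak KAM solution, then locate $\A_{w_-}$ inside $\A_{u_-}\cap\A_{v_-}$, and finally extract the $C^{1,1}$ regularity together with the velocity identity.

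For step one, I would use the representation $T_t^-\varphi(x)=\inf_{y\in M} h_{y,\varphi(y)}(x,t)$ together with the monotonicity $u_0\mapsto h_{y,u_0}(x,t)$ granted by (H3) through the implicit variational principle used throughout the paper. Monotonicity yields the pointwise identity $h_{y,w_-(y)}(x,t)=\min\bigl(h_{y,u_-(y)}(x,t),\,h_{y,v_-(y)}(x,t)\bigr)$, and since $\inf_y$ commutes with pointwise $\min$, one gets $T_t^-w_-=\min(T_t^-u_-,T_t^-v_-)=\min(u_-,v_-)=w_-$; hence $w_-\in\cS_-$.

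For step two, $w_-\leq u_-$ everywhere in particular forces $w_-\leq u_-$ on $\A_{w_-}$; applying Theorem \ref{grpaord} with $w_-$ in place of $u_-$ and $u_-$ in place of $v_-$ gives $\A_{w_-}\subseteq\A_{u_-}$. The symmetric argument yields $\A_{w_-}\subseteq\A_{v_-}$, hence $\A_{w_-}\subseteq\A_{u_-}\cap\A_{v_-}$. Combined with $\A_{u_\pm}\subseteq\N_{u_\pm}=\I_{u_\pm}$ from Theorem \ref{88996}(2) and the $C^{1,1}$ regularity of $u_\pm$ on $\I_{u_\pm}$ from \cite[Proposition 4.2, Remark 4.2]{WWY2}, this already shows that both $u_-$ and $v_-$ are $C^{1,1}$ on $\A_{w_-}$.

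For the velocity identity, at any $x\in\A_{w_-}$ the points $(x,u_-(x),Du_-(x))\in\tilde{\A}_{u_-}$ and $(x,v_-(x),Dv_-(x))\in\tilde{\A}_{v_-}$ both lie in $\tilde{\A}$. Under the Legendre transformation they become elements of $\bar{\A}\subseteq TM\times\R$; projecting by $\rho$ then produces two elements of $\rho(\bar{\A})$ sharing base point $x$, which the Graph Property (Theorem \ref{grappff}) forces to coincide. Reading off the first equation of (CH), the common tangent vector equals $\partial_pH$, whence $\partial_pH(x,u_-(x),Du_-(x))=\partial_pH(x,v_-(x),Dv_-(x))$. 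In the separated case $H=f(x,u)+h(x,p)$, $\partial_pH=h_p(x,p)$ is independent of $u$, so this reduces to $h_p(x,Du_-)=h_p(x,Dv_-)$, and strict convexity of $h$ in $p$ (from (H1)) gives $Du_-(x)=Dv_-(x)$.

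The main obstacle is not conceptual but bookkeeping: step one requires the pointwise identity $h_{y,\min(u_0,v_0)}=\min(h_{y,u_0},h_{y,v_0})$, which rests on monotone dependence of $h_{y,u_0}$ on $u_0$ under (H3); this is standard from the Wang-Wang-Yan implicit variational principle but should be invoked explicitly. The real conceptual leverage is step two, where the partially ordered relation (Theorem \ref{grpaord}) upgrades the trivial inequality $w_-\leq u_-$ into the inclusion $\A_{w_-}\subseteq\A_{u_-}$, bypassing the otherwise delicate question of whether static orbits for $w_-$ are calibrated for $u_-$ and $v_-$ individually.
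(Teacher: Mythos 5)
Your proposal is correct and follows essentially the same route as the paper: $w_-\in\cS_-$ via the commutation of $\min$ with $T_t^-$ (the paper's Lemma \ref{exyi}, which you essentially re-derive from monotonicity of $h_{y,u_0}$ in $u_0$), the inclusion $\A_{w_-}\subseteq\A_{u_-}\cap\A_{v_-}$ via Theorem \ref{grpaord}, the $C^{1,1}$ regularity from the identification of $\A_{u_\pm}$ inside $\I_{u_\pm}$, and the velocity identity from the graph property (Theorem \ref{grappff}) plus strict convexity in the separated case. Your write-up is in fact slightly more explicit than the paper's at the Legendre-transform/graph-property step and in the separated case, but the argument is the same.
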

\begin{proof}
Let $u_-$ and $v_-\in \mathcal{S}_-$. Denote
\[w_-(x):=\min_{x\in M}\{u_-(x),v_-(x)\}.\]
It follows from Lemma \ref{exyi} that $w_-\in \cS_-$. Then $S\neq \emptyset$. Since  $w\leq u_-$ and $w\leq v_-$ on $M$, by Theorem \ref{grpaord}, we have $\A_{w_-}\subseteq \A_{u_-}\cap\A_{v_-}$.
Thus,  both $u_-$ and $v_-$ are of class $C^{1,1}$ on $\A_{w_-}$.

Note that
\[\tilde{\A}_{w_-}\subseteq \tilde{\mathcal{A}}.\]
By Theorem \ref{grappff}, for each $x\in \A_{w_-}$,
\[\frac{\partial H}{\partial p}(x,u_-(x),Du_-(x))=\frac{\partial H}{\partial p}(x,v_-(x),Dv_-(x)).\]
This completes the proof.
\end{proof}

\medskip
We prove Proposition \ref{exammmmpp}. First of all, we assert that for each $x_0\in [-\frac{1}{2},\frac{1}{2})$, $(x(t),u(t))=(x_0,0)$ is a static curve.

In fact, it is easy to verify that $(x(t),u(t),p(t))=(x_0,0,0)$ satisfies (\ref{c}). Fixing $s>0$, by the minimality of $h_{x_0,0}(x_0,s)$, we have $h_{x_0,0}(x_0,s)\leq u(s)=0$. On the other hand,  for each $s>0$, $0=u_-(x_0)=T_s^-u_-(x_0)\leq h_{x_0,0}(x_0,s)$, then we have $\inf_{s>0}h_{x_0,0}(x_0,s)=0$.  Thus, we have
 \[ \left\{(x,0,0)\ \big|\forall\ x\in [-\frac{1}{2},\frac{1}{2})\right\}=\tilde{\mathcal{A}}_{u_-}\subseteq \tilde{\mathcal{A}}.\]

Second, for each $u_i<0$, $(x(t),u(t))=(0,u_i)$ is also a static curve. In fact, it is easy to verify that $(x(t),u(t),p(t))=(0,u_i,0)$ satisfies (\ref{c}). In addition, since $u_i<0$, then $h_{0,0}(0,s)-h_{0,u_i}(0,s)\leq 0-u_i$ for each $s>0$. Hence, we have $h_{0,u_i}(0,s)\geq u_i$. Besides, by the minimality, it is clear that $h_{0,u_i}(0,s)\leq u(s)=u_i$. Then for each $s>0$, there holds $h_{0,u_i}(0,s)=u_i$. It follows that $\inf_{s>0}h_{0,u_i}(0,s)=u_i$.

Moreover, we have
\[\tilde{\A}_{u_-}\bigcup\big(\cup_{i\in I}\left\{(0,u_i,0)\ |\ u_i<0\right\}\big)\subseteq \tilde{\mathcal{A}}.\]
By Theorem \ref{namss}, for each $u_i<0$, one can find $v_i\in \mathcal{S}_-$ such that $v_i(0)=u_i$. Hence, there exist an uncountable family of nontrivial viscosity solutions.

Note that $\mathcal{A}_{u_-}=\mathbb{T}=\A$. It remains to show  $\mathcal{A}_{v_i}\subsetneqq \mathbb{T}$ for each $i\in I$. We conclude it by two steps.

\medskip
\noindent{\bf Step 1.} We prove $v_i(x)\leq 0$ for each $i\in I$ and $x\in [-\frac{1}{2},\frac{1}{2})$. Let $v_i(y_0)=\max_{y\in [-\frac{1}{2},\frac{1}{2})}v_i(y)$. Note that $v_i(0)=u_i<0$. By contradiction, we assume $v_i(y_0)>0$, then $y_0\in [-\frac{1}{2},\frac{1}{2})\backslash\{0\}$. There exists $\varepsilon>0$ such that $v_i(y)>0$ for each $y\in (y_0-\varepsilon,y_0+\varepsilon)$. Note that $v_i$ is Lipschitz continuous, one can find $y_1\in (y_0-\varepsilon,y_0+\varepsilon)$ and $y_1\neq 0$ such that $v_i$ is differentiable at $y_1$. Then we have
\[\frac{1}{2}|Dv_i(y_1)|^2+f(y_1)v_i(y_1)=0,\]
which contradicts $v_i(y_1)>0$.

\medskip
\noindent{\bf Step 2.}
We prove $\mathcal{A}_{v_i}\subsetneqq \mathbb{T}=\A_{u_-}$. By contradiction, we assume \[\A_{v_i}=\mathbb{T}.\]
Since $v_i\leq u_-\equiv 0$ on $\mathbb{T}$, then
\[v_i(x)=\min_{x\in \mathbb{T}}\{v_i(x),u_-(x)\}.\]
 By Lemma \ref{crogrp}, $Dv_i=Du_-$ on $\A_{v_i}=\mathbb{T}$. It gives rise to $Dv_i\equiv 0$. Moreover, $v_i(x)\equiv u_i<0$, which contradicts that $v_i$ satisfies
 \[\frac{1}{2}|Dv_i(x)|^2+f(x)v_i(x)=0, \quad \forall x\in \mathbb{T}.\]
This completes the proof.

\subsection{ Proof of Proposition \ref{exvnish}}

Similar to Lemma \ref{exyi}, we have
\begin{lemma}\label{exyinew}
Let $\{u_i\}_{i\in I}$ be a family of continuous functions on $M$. Then
\[\sup_{i\in I}T_t^+u_i(x)=T_t^+\left(\sup_{i\in I}u_i(x)\right),\quad \forall x\in M.\]
\end{lemma}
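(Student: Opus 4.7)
The plan is to prove the identity by showing both inequalities separately, exploiting the sup-representation of the forward Lax-Oleinik semigroup together with the monotonicity and Lipschitz continuity of the forward action function $h^{y,v}(x,t)$ in the scalar variable $v$. This parallels (and is dual to) the proof of Lemma \ref{exyi} for $T_t^-$ and infima. The key input is that
\[T_t^+\varphi(x)=\sup_{y\in M} h^{y,\varphi(y)}(x,t),\]
and that $v\mapsto h^{y,v}(x,t)$ is monotone non-decreasing and uniformly Lipschitz in $v$, with a Lipschitz constant $K(t)$ independent of $x,y$; this follows from the implicit variational principle and the type of estimates recorded in Proposition \ref{nonehh} under the assumption $|\partial_u H|\leq\lambda$, and is exhibited explicitly by formula \eqref{2-4hhh} in the example of Proposition \ref{exvnish}.

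For the direction $\sup_{i\in I}T_t^+u_i(x)\leq T_t^+(\sup_{i\in I}u_i)(x)$ only monotonicity is needed: since $u_i\leq \sup_{j\in I}u_j$ pointwise and $h^{y,\cdot}(x,t)$ is monotone non-decreasing, we obtain $T_t^+u_i\leq T_t^+(\sup_{j\in I}u_j)$ for each $i\in I$, and taking the supremum over $i\in I$ yields the inequality.

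The substantive direction is the reverse inequality. Fix $x\in M$ and $\epsilon>0$. For each $y\in M$ pick $i_y\in I$ with $u_{i_y}(y)\geq \sup_{j\in I}u_j(y)-\epsilon$. Monotonicity and the Lipschitz estimate for $h^{y,\cdot}(x,t)$ then give
\[h^{y,\sup_{j\in I}u_j(y)}(x,t)\leq h^{y,u_{i_y}(y)}(x,t)+K(t)\epsilon\leq T_t^+u_{i_y}(x)+K(t)\epsilon\leq \sup_{i\in I}T_t^+u_i(x)+K(t)\epsilon.\]
Taking the supremum over $y\in M$ on the left-hand side and using the sup-representation of $T_t^+$ yields $T_t^+(\sup_{i\in I}u_i)(x)\leq \sup_{i\in I}T_t^+u_i(x)+K(t)\epsilon$, and letting $\epsilon\to 0^+$ gives the desired inequality.

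The main obstacle is conceptual rather than computational: the choice $i_y$ depends on $y$ in an uncontrolled way, so one cannot straightforwardly pull the supremum over $i\in I$ outside of the supremum over $y\in M$. The uniform Lipschitz bound $K(t)$ on $v\mapsto h^{y,v}(x,t)$ circumvents this difficulty, because after the Lipschitz estimate the only dependence on $i_y$ sits inside the scalar quantity $T_t^+u_{i_y}(x)$, which is trivially majorized by $\sup_{i\in I}T_t^+u_i(x)$. No measurability or continuity of $y\mapsto i_y$ is required, and the identity holds for an arbitrary index set $I$.
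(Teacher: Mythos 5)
Your proposal is correct and follows essentially the same route as the paper: the paper proves this lemma "similarly to Lemma \ref{exyi}", i.e.\ by writing $T_t^+\varphi(x)=\sup_{y\in M}h^{y,\varphi(y)}(x,t)$, using the monotone (and Lipschitz, with constant $e^{\lambda t}$ under $|\partial H/\partial u|\leq\lambda$) dependence of $h^{y,v}(x,t)$ on $v$ to commute the scalar supremum with the action function, and then exchanging the two suprema. Your explicit $\epsilon$--$K(t)$ argument merely spells out the continuity step that the paper leaves implicit under the word "monotonicity", so there is no genuine difference in method.
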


The following results hold under the assumption (H1), (H2) and  the moderate increasing assumption $0<\frac{\partial H}{\partial u}\leq \lambda$. In this case, we have the unique backward weak KAM solution, denoted by $u_-$ (equivalently, viscosity solution) of $H(x,u,Du)=0$, if $H$ is admissible. Moreover,
\[\A=\mathcal{I}_{(u_-,u_+)}:=\{x\in M\ |\ u_-(x)=u_+(x)\},\]
where $u_+$ denotes the maximal forward weak KAM solution of $H(x,u,Du)=0$.

\begin{lemma}\label{hyxuxttttt}
For each $y\in \A$, let
\[h_y(x):=\limsup_{t\ri +\infty}h^{y,u_-(y)}(x,t),\quad \forall x\in M.\]
Then $h_y\in \cS_+$.
\end{lemma}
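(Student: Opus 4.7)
The plan is to show $h_y$ is a Lipschitz fixed point of the forward Lax--Oleinik semigroup $T_t^+$, which under the strict monotonicity in force here characterises membership in $\mathcal{S}_+$. Throughout I write $w_s(x) := h^{y,u_-(y)}(x,s)$, so that $h_y(x) = \limsup_{s\to\infty} w_s(x)$. The calculations will all be driven by the forward Markov identity
\begin{equation*}
w_{s+t}(x) \;=\; \sup_{z\in M} h^{z,\,w_s(z)}(x,t), \qquad s,t>0,
\end{equation*}
which is the forward-action counterpart of the semigroup relation implicit in Proposition \ref{aclipp}.

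First I would establish that $\{w_s\}_{s\geq 1}$ is uniformly bounded and uniformly Lipschitz on $M$. The upper bound $w_s(x)\leq u_+(x)$ is automatic from domination: since $u_+\in\mathcal{S}_+$ and $u_+(y)=u_-(y)$ because $y\in\mathcal{A}=\mathcal{I}_{(u_-,u_+)}$, one has $u_+(x)=T_t^+u_+(x)\geq h^{y,u_+(y)}(x,t)=w_t(x)$. A matching lower bound at $x=y$ comes from the static orbit $(x(\cdot),u(\cdot))$ through $y$: by the recurrence properties of the Aubry set (Theorem \ref{namss}(2)) there are times $t_n\to+\infty$ with $(x(-t_n),u(-t_n))\to(y,u_-(y))$, and inserting the corresponding segment of the static orbit into the variational problem defining $w_{t_n}(y)$ gives $\liminf_n w_{t_n}(y)\geq u_-(y)$. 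Combined with the upper bound, this forces $h_y(y)=u_-(y)$, and the uniform Lipschitz-in-$x$ estimate for $h^{\cdot,\cdot}(\cdot,s)$ (the forward analogue of Proposition \ref{aclipp}) propagates finiteness and a uniform Lipschitz modulus from $y$ to all of $M$.

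Finally, fix $t\geq 0$ and pass to $\limsup_{s\to\infty}$ in the Markov identity:
\begin{equation*}
h_y(x) \;=\; \limsup_{s\to\infty}\,\sup_{z\in M} h^{z,\,w_s(z)}(x,t).
\end{equation*}
The $\leq$ direction I handle by choosing a near-optimal $z_s$ for each $s$, extracting a convergent subsequence $z_{s_k}\to z^*$ (by compactness of $M$), and using equicontinuity of $\{w_s\}$ together with uniform Lipschitz dependence of $h^{z,u_0}(x,t)$ on $(z,u_0)$ (Proposition \ref{nonehh} and Proposition \ref{aclipp}) to identify the limit as $h^{z^*,h_y(z^*)}(x,t)$; the $\geq$ direction follows by inserting any fixed $z$ and using Lipschitz-in-$u_0$ to pass $\limsup_s h^{z,w_s(z)}(x,t)=h^{z,h_y(z)}(x,t)$, then taking the supremum over $z$. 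Combining both gives $T_t^+h_y=h_y$, hence $h_y\in\mathcal{S}_+$.

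The main obstacle is precisely this interchange of $\limsup$ and $\sup$: it demands simultaneous uniform Lipschitz control in $z$, $u_0$, and $s$, in a regime where the family $\{w_s\}$ is only controlled in $\limsup$ and need not converge pointwise. It is the strict monotonicity $0<\partial_u H\leq\lambda$ in force for this lemma—rather than only (H3)—that renders the relevant Lipschitz constants uniform in $s$ and provides the contractive estimates underlying the equicontinuity argument.
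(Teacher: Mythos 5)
Your overall route is the paper's: write $T_t^+w_s=w_{s+t}$ (your Markov identity) and pass $\limsup_{s\to\infty}$ through the semigroup, the needed uniformity coming from boundedness and equi-Lipschitz continuity of $\{h^{y,u_-(y)}(\cdot,s)\}_{s>0}$. The paper gets that input from Lemma \ref{uueequi} (applied to the static curve through $y$) and then phrases the interchange compactly: $T_t^+$ is $e^{\lambda t}$-Lipschitz on $C(M,\R)$ and commutes with suprema by Lemma \ref{exyinew}, hence with $\limsup$; your by-hand subsequence argument with near-optimal $z_s$ accomplishes the same thing and is fine once the uniform bounds are in place.

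The genuine gap is in how you secure those bounds. Your lower bound rests on the claim that, by Theorem \ref{namss}(2), there are $t_n\to+\infty$ with $(x(-t_n),u(-t_n))\to(y,u_-(y))$; but that theorem only places $\alpha$- and $\omega$-limit sets inside the Aubry set, it does not make an individual point $y\in\A$ recurrent, and recurrence is false in general even under $0<\partial H/\partial u\le\lambda$: in the example of Proposition \ref{exppp}, $\A=\T$ while the dynamics on $\tilde\A$ is $\dot x=V(x)$, so every point other than $x_1,x_2$ is wandering. Moreover, even granting a bound at the single point $y$ along a subsequence, your "propagation" step is circular: the equi-Lipschitz estimate for $x\mapsto h^{y,u_-(y)}(x,t)$ is obtained from the Markov property together with the local Lipschitz constant of $h^{\cdot,\cdot}(\cdot,\delta)$ on $M\times[-K,K]\times M$, which already presupposes a two-sided uniform bound of the family on all of $M$; and the limsup/sup interchange needs exactly that uniform control. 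The correct replacement is the paper's Lemma \ref{uueequi}: along the static curve $(x(\cdot),u(\cdot))$ through $y$ one has $h^{y,u_-(y)}(x(-\tau),\tau)=u(-\tau)$ (backward calibration, via Proposition \ref{relation}), and combining this with the Markov property and the boundedness of static curves (Proposition \ref{bounded}) yields uniform two-sided bounds and an equi-Lipschitz constant independent of $t$, with no recurrence needed. A minor further point: Proposition \ref{aclipp} concerns the forward Peierls barrier $h_{x_0,u_0}(x,+\infty)$, so it is not the right citation for Lipschitz dependence of the finite-time backward action on $(z,u_0)$; what you need is the local Lipschitz property of $h^{\cdot,\cdot}(\cdot,\delta)$ together with Proposition \ref{nonehh}, as used in Lemma \ref{uueequi}.
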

\begin{proof}
 Let $(x(\cdot),u(\cdot)):\R\ri M\times\R$ be a static curve with $x(0)=y$. Then $u(t)=u_-(x(t))$ for all $t\in \R$.  By Lemma \ref{uueequi}, $h^{y,u_-(y)}(\cdot,\cdot)$ is uniformly bounded on $M\times (0,+\infty)$ and the family $\{h^{y,u_-(y)}(x,t)\}_{t>0}$ is equi-Lipschitz continuous with respect to $x$. Thus, $h_y(x)$ is well defined.
 Note that for a given $t>0$, the forward semigroup $T_t^+$ satisfies
 \[\|T_t^+\varphi-T_t^+\psi\|_\infty\leq e^{\lambda t}\|\varphi-\psi\|_\infty,\]
 for any $\varphi,\psi\in C(M,\R)$. Combining with Lemma \ref{exyinew}, $T_t^+$ commutes with $\limsup$. It follows that
 \[T_t^+h_y(x)=\limsup_{s\ri +\infty}T_t^+h^{y,u_-(y)}(x,s)=\limsup_{s\ri +\infty}h^{y,u_-(y)}(x,s+t)=h_y(x),\]
 which implies  $h_y\in \cS_+$.
\end{proof}

\begin{proposition}\label{prsa}
If the forward weak KAM solution is unique, then $\tilde{\cS}_s=\tilde{\A}$.
\end{proposition}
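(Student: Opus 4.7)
The plan is to establish the missing inclusion $\tilde{\A}\subseteq\tilde{\mathcal{S}}_s$, since Theorem~\ref{conterstatic} already gives $\tilde{\mathcal{S}}_s\subseteq\tilde{\A}$. Because $\tilde{\mathcal{S}}_s$ is by definition the closure of the set of strongly static orbits and $\tilde{\A}$ is closed by Theorem~\ref{namss}(3), it is enough to show that every static orbit is in fact strongly static. Throughout, I work under the moderate increasing hypothesis in force just before Lemma~\ref{hyxuxttttt}, so that $u_-$ is unique and $\A=\{x\in M:u_-(x)=u_+(x)\}$; the standing extra assumption of the proposition is that $u_+$ is the unique element of $\cS_+$ as well.

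Fix a static orbit $(x(t),u(t),p(t))$. Then $x(t)\in\A$ and $u(t)=u_-(x(t))=u_+(x(t))$ for every $t\in\R$. For arbitrary $t_1,t_2\in\R$ I need to verify
\[
u(t_2)=\sup_{s>0}h^{x(t_1),u(t_1)}(x(t_2),s).
\]
For the upper bound, set $y:=x(t_1)\in\A$ and invoke Lemma~\ref{hyxuxttttt}: the function $h_y(x)=\limsup_{t\to+\infty}h^{y,u_-(y)}(x,t)$ belongs to $\cS_+$. The uniqueness assumption pins it down to $h_y\equiv u_+$. Evaluating at $x(t_2)\in\A$ and using $u_-(y)=u(t_1)$ yields
\[
u(t_2)=u_+(x(t_2))=h_y(x(t_2))=\limsup_{t\to+\infty}h^{x(t_1),u(t_1)}(x(t_2),t)\leq\sup_{s>0}h^{x(t_1),u(t_1)}(x(t_2),s).
\]

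For the reverse inequality, I exploit that $u_+$ is a fixed point of the forward Lax--Oleinik semigroup, together with the built-in domination $T_s^+\varphi(x)\geq h^{y,\varphi(y)}(x,s)$ valid for every $y\in M$, $s>0$ and $\varphi\in C(M,\R)$. Applying this with $\varphi=u_+$, $y=x(t_1)$ (so that $u_+(y)=u_-(y)=u(t_1)$) and $x=x(t_2)$, I obtain $u(t_2)=u_+(x(t_2))\geq h^{x(t_1),u(t_1)}(x(t_2),s)$ for every $s>0$, hence $u(t_2)\geq\sup_{s>0}h^{x(t_1),u(t_1)}(x(t_2),s)$. The two bounds match, so the defining identity of strong staticity is verified; global minimality of the curve is inherited from the fact that it was already a static orbit.

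The main delicate point is the identification $h_y\equiv u_+$, which is exactly where the uniqueness hypothesis on $\cS_+$ enters; once that is in place, the rest is bookkeeping with the two characterizations $u(t_i)=u_+(x(t_i))$ on $\A$ and with the elementary inequality $T_s^+u_+\geq h^{y,u_+(y)}(\cdot,s)$. If one prefers to avoid appealing to this semigroup inequality, the same upper bound for $\sup_{s>0}h^{x(t_1),u(t_1)}(x(t_2),s)$ can be recovered directly from the sup-characterization of $u_+$ as the maximal forward weak KAM solution, since the candidate $x\mapsto\max\{u_+(x),h^{x(t_1),u(t_1)}(x,s_0)\}$ for any fixed $s_0>0$ generates (after applying $T_t^+$ and passing to the limit) an element of $\cS_+$ that must coincide with $u_+$.
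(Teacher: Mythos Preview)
Your proof is correct and follows essentially the same approach as the paper: both use Lemma~\ref{hyxuxttttt} and the uniqueness hypothesis to identify $h_y\equiv u_+$, obtain the inequality $u(t_2)\leq\sup_{s>0}h^{x(t_1),u(t_1)}(x(t_2),s)$ via the $\limsup$, and obtain the reverse inequality from the fixed-point property $u_+=T_s^+u_+\geq h^{y,u_+(y)}(\cdot,s)$. Your presentation of the upper bound is in fact slightly more direct than the paper's, which routes the same inequality through $h_y$ rather than $u_+$.
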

\begin{proof}
It suffices to prove $\tilde{\A}\subseteq\tilde{\cS}_s$. Let  $(x(\cdot),u(\cdot)):\R\ri M\times\R$ be a static curve, we only need to show for each $t_1,t_2\in \R$, there holds
\[	u(t_2)=\sup_{s>0}h^{x(t_1),u(t_1)}(x(t_2),s).\]
Since the forward weak KAM solution is unique, denoting it by $u_+$, we have
\[u(t)=u_-(x(t))=u_+(x(t)),\quad \forall t\in\R.\]
By Lemma \ref{hyxuxttttt}, $h_y\in \cS_+$ for each $y\in \A$. Then $h_y=u_+$ on $M$ for each $y\in \A$. It yields for any $\tau_1,\tau_2\in \R$,
\begin{align*}
u(\tau_2)&=u_+(x(\tau_2))\\
&=h_{x(\tau_1)}(x(\tau_2))\\
&=\limsup_{s\ri +\infty}h^{x(\tau_1),u_-(x(\tau_1))}(x(\tau_2),s)\\
&=\limsup_{s\ri +\infty}h^{x(\tau_1),u(\tau_1)}(x(\tau_2),s).
\end{align*}
In particular, $u(t_1)=u_+(x(t_1))=h_{x(t_1)}(x(t_1))$. It remains to show
\[\limsup_{s\ri +\infty}h^{x(t_1),u(t_1)}(x(t_2),s)=\sup_{s>0}h^{x(t_1),u(t_1)}(x(t_2),s).\]
It is clear that the left hand side is not bigger than the right hand side. Since $h_y\in \cS_+$ for each $y\in \A$, then
\[h_y(x)=T_t^+h_y(x)=\sup_{z\in M}h^{z,h_y(z)}(x,s)\geq \sup_{s>0}h^{y,h_y(y)}(x,s),\]
which implies
\begin{align*}
\limsup_{s\ri +\infty}h^{x(t_1),u(t_1)}(x(t_2),s)&=h_{x(t_1)}(x(t_2))\\
&\geq\sup_{s>0}h^{x(t_1),h_{x(t_1)}(x(t_1))}(x(t_2),s)\\
&=\sup_{s>0}h^{x(t_1),u_+(x(t_1))}(x(t_2),s)\\
&=\sup_{s>0}h^{x(t_1),u(t_1)}(x(t_2),s).
\end{align*}
This completes the proof.
\end{proof}


\vskip 1cm
In the following, we  prove Proposition \ref{exvnish}. Note that $u^-_\lambda\equiv 0$ is the classical solution, then  $u^+_\lambda=u^-_\lambda$ is the maximal forward weak KAM solution. It is clear that  $u^+_\lambda$ converges uniformly to $u\equiv 0$ as $\lambda\ri 0^+$. Obviously, $u\equiv 0$ is a classical (also viscosity) solution of
\begin{equation}\label{dwcos}
\frac{1}{2}|Dw|^2+Dw\cdot V(x)=0,\quad x\in \mathbb{T}.
\end{equation}
The remaining proof is divided into two steps.

\vskip 0.2cm

{\it In the first step, we show that
 there is only one forward weak KAM solution  except $u^+_\lambda$.}

 The existence of the forward weak KAM solution except $u^+_\lambda$ follows from Proposition \ref{exppp} and Proposition \ref{prsa}. Let $v^+_\lambda\in \cS_+$ which is different from  $u^+_\lambda$. Thus, $v^+_\lambda\leq 0$ and there exists $x_0\in \mathbb{T}$ such that $v^+_\lambda(x_0)<0$. Considering
 \[\mathcal{I}_{v^+_\lambda}:=\{x\in \mathbb{T}\ |\ v^+_\lambda(x)=0\},\]
 By \cite[Theorem 1.3]{WWY2}, $\mathcal{I}_{v^+_\lambda}$ is invariant by $\pi\Phi_t$, where $\pi:T^*M\times\R\ri M$ denotes the standard projection. Consequently, there are several possibilities for $\mathcal{I}_{v^+_\lambda}$ restricting on a fundamental domain of $\mathbb{T}$ denoted by $[x_1,y_0)$:
 \[\{x_1\},\ \{x_2\},\ \{x_1,x_2\},\ [x_1,x_2],\ [x_2,y_0)\cup \{x_1\},\ [x_1,y_0).\]

\vskip 0.2cm

 We assert that $x_2\notin \mathcal{I}_{v^+_\lambda}$. Then $\mathcal{I}_{v^+_\lambda}=\{x_1\}$. In fact, if  $x_2\in \mathcal{I}_{v^+_\lambda}$, then $v^+_\lambda(x_2)=0$.
Let $z_0$ be the minimum point of $v^+_\lambda$. We assume $Dv^+_\lambda(x)$ exists at $x=z_0$, which means $Dv^+_\lambda(z_0)=0$. Note that there exists $x_0\in \mathbb{T}$ such that $v^+_\lambda(x_0)<0$, then $v^+_\lambda(z_0)\leq v^+_\lambda(x_0)<0$. It follows that
 \[\lambda v^+_\lambda(z_0)+\frac{1}{2}|Dv^+_\lambda(z_0)|^2+Dv^+_\lambda(z_0)\cdot V(z_0)<0,\]
 which contradicts the definition of $v^+_\lambda$. Therefore, $v^+_\lambda(x)$ is not differentiable at $x=z_0$.
 Let $\gm:[0,+\infty]\ri \mathbb{T}$ be a $(v^+_\lambda, L_\lambda,0)$-calibrated curve with $\gm(0)=z_0$, where $L_\lambda$ denotes the Legendre transformation of $H_\lambda(x,u,p):=\lambda u+\frac{1}{2}|p|^2+p\cdot V(x)$. Namely, $L_\lambda(x,u,\dot{x}):=-\lambda u+\frac{1}{2}|\dot{x}-V(x)|^2$. Let $u(t):=v^+_\lambda(\gm(t))$ for all $t\geq 0$. By Proposition \ref{pr119955}, $(\gm(\cdot),u(\cdot)):[0,+\infty)\ri M\times\R$ is positively semi-static. By definition, it is also globally minimizing.  For each $\delta>0$, there holds
 \begin{equation}\label{huawei}
 \begin{split}
 u(\delta)&=e^{-\lambda \delta}u(0)+\inf_{\xi}\int_0^{\delta} e^{\lambda s}\frac{1}{2}|\dot{\xi}(s)-V(\xi(s))|^2 ds\\
 &=\inf_{t>0}\left\{e^{-\lambda t}u(0)+\inf_{\xi}\int_0^t e^{\lambda s}\frac{1}{2}|\dot{\xi}(s)-V(\xi(s))|^2 ds\right\},
 \end{split}
 \end{equation}
 where the  infimum is taken among the Lipschitz continuous curves $\xi:[0,\delta]\ri \mathbb{T}$ with $\xi(0)=\gm(0)=z_0$ and $\xi(\delta)=\gm(\delta)$. We assume $z_0\in (x_1,x_2)$ without loss of generality, since the argument is similar for $z_0\in (x_2,y_0)$. Similar to \cite[Proposition 10]{dw}, $\gm(t)\ri x_2$ and $u(t)\ri 0$ as $t\ri +\infty$. Thus, for $\delta>0$ large enough, $\gm(\delta)\in (z_0, x_2)$. Moreover, the infimum in (\ref{huawei}) can be achieved by a curve $\bar{\xi}:[0,\delta]\ri \mathbb{T}$ satisfying $\dot{\bar{\xi}}=V(\bar{\xi})$. It gives rise to
 \begin{equation}\label{humathe}
  e^{-\lambda \delta}u(0)=\inf_{t>0}\left\{e^{-\lambda t}u(0)\right\}.
  \end{equation}
 Note that $ u(0)=v^+_\lambda(z_0)<0$, then we have a contradiction. Therefore, we complete the proof of the assertion  $x_2\notin \mathcal{I}_{v^+_\lambda}$, which means $\mathcal{I}_{v^+_\lambda}
 $ has only one possibility, namely, $\mathcal{I}_{v^+_\lambda}=\{x_1\}$.

\vskip 0.2cm

 It is easy to check $\{(x_1,0,0)\}$ is a hyperbolic fixed point of $\Phi_t$. Similar to \cite[Proposition 10]{dw},  we obtain that  $v^+_\lambda$ is the unique forward weak KAM solution except $u^+_\lambda\equiv 0$.

\vskip 0.2cm

{\it In the second step, we show that $v^+_\lambda$  converges to $v$ as $\lambda\ri 0^+$, where $v$ is different from $u\equiv 0$.}

Since $u\equiv 0$ is the maximal forward weak KAM solution, then $v^+_\lambda\leq 0$. In view of $\mathcal{I}_{v^+_\lambda}=\{x_1\}$ and $u^-_\lambda\equiv 0$, we have $v^+_\lambda(x_1)=Dv^+_\lambda(x_1)=0$ and $v^+_\lambda(x)<0$ for $x\in (x_1,y_0)$. Hence,  a direct calculation shows
 \begin{equation}\label{hump99}
Dv^+_\lambda(x)=-V(x)-\sqrt{V^2(x)-2\lambda v^+_\lambda(x)}, \quad \forall x\in [x_1,x_2].
  \end{equation}
By \ref{hump99}, we have
 \begin{equation}\label{wllx}
v^+_\lambda(x_2)=-\int_{x_1}^{x_2}V(x)dx-\int_{x_1}^{x_2}\sqrt{V^2(x)-2\lambda v^+_\lambda(x)}dx.
  \end{equation}
Since $v^+_\lambda\leq 0$, then $v^+_\lambda(x_2)$ is decreasing as $\lambda\to 0^+$. By \cite[Lemma 3.1]{dw}, the family $\{v^+_\lambda\}_{\lambda\in (0,1]}$ is uniformly bounded and equi-Lipschitz. Note that $V(x)>0$ for $x\in (x_1,x_2)$. By  the Dominated Convergence Theorem,
 \begin{equation}\label{wllx44}
v^+_\lambda(x_2)\to-2\int_{x_1}^{x_2}V(x)dx<0,\quad \text{as}\ \lambda\to 0^+.
  \end{equation}
In addition, we know that $0\equiv v^+_\lambda(x_1)\to 0$ as $\lambda\to 0^+$.

To prove the convergence, we consider the projected Mather set of  (\ref{dwcos}). The Lagrangian associated to (\ref{dwcos}) is formulated as
\[L(x,v):=\frac{1}{2}|v-V(x)|^2.\]
Let $\mathcal{M}_0$ be the projected Mather set of $L$. Note that the Mather set  is recurrent. One has $\mathcal{M}_0=\{x_1,x_2\}$. The argument above shows for each subsequence $\lambda_n$, the limit of $v^+_{\lambda_n}$ takes the same value on $\mathcal{M}_0$. It follows that  $v^+_{\lambda}$ does convergence uniformly to $v$. By (\ref{wllx44}), $v(x_2)<0$, which is different from $u\equiv 0$.

This completes the proof of Proposition \ref{exvnish}.


\vskip 1cm

\noindent {\bf Acknowledgements:} We would like to thank the referee for the careful reading of the paper and invaluable
comments which are very helpful in improving this paper.
Kaizhi Wang is supported by NSFC Grant No. 11771283, 11931016.
Lin Wang is supported by NSFC Grant No. 11790273, 11631006.
Jun Yan is supported by NSFC Grant No.  11631006, 11790273.

\appendix
\section{Generalities}\label{gene}

The contact Lagrangian $L(x,u, v)$ associated to $H(x,u,p)$ is defined by
\[
L(x,u, v):=\sup_{p\in T^*_xM}\left\{\langle v,p\rangle-H(x,u,p)\right\}.
\]
Since $H$ satisfies (H1), (H2) and (H3), we have:
\begin{itemize}
\item [\textbf{(L1)}] {\it Strict convexity}:  the Hessian  $\frac{\partial^2 L}{\partial {v}^2} (x,u,v)$ is positive definite for all $(x,u,v)\in TM\times\R$;
    \item [\textbf{(L2)}] {\it Superlinearity}: for every $(x,u)\in M\times\R$, $L(x,u,v)$ is superlinear in $v$;
\item [\textbf{(L3)}] {\it Non-increasing}: there is a constant $\lambda>0$ such that for every $(x,u,v)\in TM\times\R$,
                        \begin{equation*}
                        -\lambda\leq \frac{\partial L}{\partial u}(x,u,v)\leq 0.
                        \end{equation*}
\end{itemize}

\subsection{Action functions}

\begin{theorem}\label{IVP}
	For any given $x_0\in M$, $u_0\in\mathbb{R}$, there exist two continuous functions $h_{x_0,u_0}(x,t)$ and $h^{x_0,u_0}(x,t)$ defined on $M\times(0,+\infty)$ satisfying	
\begin{equation}\label{baacf1}
h_{x_0,u_0}(x,t)=u_0+\inf_{\substack{\gamma(0)=x_0 \\  \gamma(t)=x} }\int_0^tL\big(\gamma(\tau),h_{x_0,u_0}(\gamma(\tau),\tau),\dot{\gamma}(\tau)\big)d\tau,
\end{equation}
\begin{align}\label{2-3}
h^{x_0,u_0}(x,t)=u_0-\inf_{\substack{\gamma(t)=x_0 \\  \gamma(0)=x } }\int_0^tL\big(\gamma(\tau),h^{x_0,u_0}(\gamma(\tau),t-\tau),\dot{\gamma}(\tau)\big)d\tau,
\end{align}
where the infimums are taken among the Lipschitz continuous curves $\gamma:[0,t]\rightarrow M$.
Moreover, the infimums in (\ref{baacf1}) and \eqref{2-3} can be achieved.
If $\gamma_1$ and $\gamma_2$ are curves achieving the infimums \eqref{baacf1} and \eqref{2-3} respectively, then $\gamma_1$ and $\gamma_2$ are of class $C^1$.
Let
\begin{align*}
x_1(s)&:=\gamma_1(s),\quad u_1(s):=h_{x_0,u_0}(\gamma_1(s),s),\,\,\,\qquad  p_1(s):=\frac{\partial L}{\partial v}(\gamma_1(s),u_1(s),\dot{\gamma}_1(s)),\\
x_2(s)&:=\gamma_2(s),\quad u_2(s):=h^{x_0,u_0}(\gamma_1(s),t-s),\quad   p_2(s):=\frac{\partial L}{\partial v}(\gamma_2(s),u_2(s),\dot{\gamma}_2(s)).
\end{align*}
Then $(x_1(s),u_1(s),p_1(s))$ and $(x_2(s),u_2(s),p_2(s))$ satisfy equations \eqref{c} with
\begin{align*}
x_1(0)=x_0, \quad x_1(t)=x, \quad \lim_{s\rightarrow 0^+}u_1(s)=u_0,\\
x_2(0)=x, \quad x_2(t)=x_0, \quad \lim_{s\rightarrow t^-}u_2(s)=u_0.
\end{align*}
\end{theorem}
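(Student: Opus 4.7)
The plan is to bypass the apparent self-referentiality of \eqref{baacf1} by giving a direct, non-implicit definition of $h_{x_0,u_0}(x,t)$ through an auxiliary ODE along each competing curve, then to verify that this definition satisfies the implicit identity and that minimizers are regular. Concretely, for every Lipschitz curve $\gm:[0,t]\ri M$ with $\gm(0)=x_0$, the Cauchy problem
\[
\dot u(\tau)=L(\gm(\tau),u(\tau),\dot{\gm}(\tau)),\qquad u(0)=u_0,
\]
has a unique absolutely continuous solution $u_\gm(\tau)$ because (L3) makes $L$ Lipschitz in $u$ uniformly on bounded sets. I would then \emph{define}
\[
h_{x_0,u_0}(x,t):=\inf_{\gm}u_\gm(t),
\]
the infimum being taken over Lipschitz $\gm:[0,t]\to M$ joining $x_0$ to $x$, and likewise $h^{x_0,u_0}(x,t)$ via the time-reversed ODE. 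The implicit identity \eqref{baacf1} would then be read off the Markov property of the ODE: splitting an admissible curve at time $\tau\in(0,t)$ and noting $u_\gm(\tau)=h_{x_0,u_0}(\gm(\tau),\tau)$ along a minimizer, the integral representation follows tautologically.

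First I would establish a priori bounds. Using (L3) and Gronwall, $|u_\gm(t)-u_0-\int_0^t L(\gm,u_0,\dot{\gm})d\tau|\le Ce^{\lambda t}$ on bounded intervals, so superlinearity (L2) applied to $L(\cdot,u_0,\cdot)$ yields a uniform lower bound on $u_\gm(t)$ (depending on $t$ and $d(x_0,x)$) and forces any minimizing sequence $\gm_n$ to have uniformly controlled energy $\int_0^t\|\dot{\gm}_n\|^2$. This is the standard Tonelli apparatus: equi-absolute-continuity, a weakly convergent subsequence, joint lower semicontinuity in $(u,\dot{\gm})$ via (L1), and continuous dependence of $u_\gm$ on $\gm$ in the uniform topology give a minimizer $\gm_*$, so the infimums in \eqref{baacf1}--\eqref{2-3} are attained. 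Continuity of $h_{x_0,u_0}$ in $(x,t)$ on $M\times(0,+\infty)$ would follow from the existence of competitor curves obtained by concatenating minimizers with short geodesic segments and estimating the ODE perturbation by Gronwall.

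Next I would prove that every minimizer $\gm_*$ is of class $C^1$. Restricting to an arbitrary subinterval $[\tau_1,\tau_2]\subset(0,t)$ with fixed endpoints, $\gm_*$ must be a minimizer of Herglotz's variational problem on $[\tau_1,\tau_2]$ with the boundary value $u_{\gm_*}(\tau_1)$ prescribed. By (L1) and (L2), the classical Tonelli regularity argument (piecewise $C^1$ on subintervals of differentiability of $u_{\gm_*}$, upgraded via the Weierstrass--Erdmann type condition coming from the smoothness of $L$ in all variables) yields $\gm_*\in C^1$. To derive the equations of motion I would carry out a standard inner variation. Writing $u_*(\tau):=u_{\gm_*}(\tau)$ and $p_*(\tau):=\tfrac{\partial L}{\partial v}(\gm_*,u_*,\dot{\gm}_*)$, a perturbation $\gm_s=\gm_*+s\eta$ with $\eta(\tau_1)=\eta(\tau_2)=0$ produces a perturbed ODE for $u_s$; differentiating in $s$ at $s=0$ gives a linear ODE for $\partial_s u_s|_{s=0}$ with source term that, after integration by parts, vanishes for all $\eta$ iff the Herglotz--Euler--Lagrange equation
\[
\tfrac{d}{d\tau}\tfrac{\partial L}{\partial v}-\tfrac{\partial L}{\partial x}=\tfrac{\partial L}{\partial u}\,\tfrac{\partial L}{\partial v}
\]
holds. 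Applying the Legendre transform $p=\partial_v L$ and using the Fenchel identity $\langle \dot x,p\rangle=L+H$ converts this into the contact Hamilton system \eqref{c}; the boundary conditions $x_1(0)=x_0$, $x_1(t)=x$, $u_1(0^+)=u_0$ come directly from the construction.

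The main obstacle I anticipate is the circularity built into \eqref{baacf1}: one must justify that $h_{x_0,u_0}(\gm(\tau),\tau)$ on the right is the object being defined on the left, not just any continuous function satisfying a fixed-point identity. The ODE-based definition resolves this cleanly, but one still has to prove uniqueness, namely that any continuous solution of \eqref{baacf1} coincides with the $\inf u_\gm(t)$ defined above. I would establish this by showing, via the ODE comparison lemma afforded by (L3), that if $w$ is any continuous solution of \eqref{baacf1} then $w(x,t)\le u_\gm(t)$ for every admissible $\gm$ (hence $w\le h_{x_0,u_0}$), and conversely that minimizers for $w$ exhibit $w(\gm_*(t),t)= u_{\gm_*}(t)\ge h_{x_0,u_0}(x,t)$. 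The symmetric construction for $h^{x_0,u_0}$ is handled identically after reversing time, with (L3) still providing the Lipschitz control required for the backward Cauchy problem.
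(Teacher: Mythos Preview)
The paper does not supply a proof of this theorem at all: it is stated in Appendix~A as a ``generality'' recalled from the authors' earlier work \cite{WWY,WWY1}, so there is no in-paper argument to compare against directly. That said, your outline is essentially the \emph{Herglotz} route to the implicit variational principle, which the paper explicitly mentions (citing \cite{CCWY,CCJWY,LTW,her}) as an alternative to the original construction in \cite{WWY}. In \cite{WWY} the function $h_{x_0,u_0}$ is obtained by a contraction/fixed-point argument on $C(M\times[\delta,t])$ for the map $F[w](x,s)=u_0+\inf_\gamma\int_0^s L(\gamma,w(\gamma,\tau),\dot\gamma)\,d\tau$, using only $|\partial L/\partial u|\le\lambda$; your approach instead solves the Carath\'eodory ODE $\dot u=L(\gamma,u,\dot\gamma)$ along each curve and minimizes the terminal value. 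Both lead to the same object, and the regularity/Euler--Lagrange part of your sketch is correct.

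One point to tighten: your uniqueness paragraph leans on the sign in (L3) (``ODE comparison lemma''), but what you actually need---and what suffices---is just the Lipschitz bound $|\partial L/\partial u|\le\lambda$ together with Gronwall. Concretely, if $w$ is any continuous solution of \eqref{baacf1} and $\gamma$ is admissible, the map $s\mapsto u_0+\int_0^s L(\gamma,w(\gamma,\tau),\dot\gamma)\,d\tau$ need not equal $w(\gamma(s),s)$ for $s<t$, so a direct comparison with $u_\gamma$ along $\gamma$ is not immediate. The clean argument is: (i) show your $h$ satisfies \eqref{baacf1} via the dynamic-programming identity $u_{\gamma}(\tau)=h_{x_0,u_0}(\gamma(\tau),\tau)$ \emph{for minimizers} (which you correctly isolate as the key step), and (ii) prove uniqueness of continuous solutions of \eqref{baacf1} by the same Gronwall estimate that makes $F$ a contraction on short time intervals, then iterate. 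This also makes the result hold under $|\partial H/\partial u|\le\lambda$, matching Remark~\ref{lipcondi}.
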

We call $h_{x_0,u_0}(x,t)$ (resp. $h^{x_0,u_0}(x,t)$) a forward (resp. backward) implicit action function associated with $L$
and the curves achieving the infimums in (\ref{baacf1}) (resp. \eqref{2-3}) minimizers of $h_{x_0,u_0}(x,t)$ (resp. $h^{x_0,u_0}(x,t)$).
The relation between forward and backward implicit action functions is as follows:
\begin{proposition}\label{relation} Given $x_0$, $x\in M$, $u_0$, $u\in\mathbb{R}$ and $t>0$, then $h_{x_0,u_0}(x,t)=u$ if and only if $h^{x,u}(x_0,t)=u_0$.
\end{proposition}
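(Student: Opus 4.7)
The plan is to re-express both action functions as optimization problems over Lipschitz curves $\gamma:[0,t]\to M$ from $x_0$ to $x$, parametrized by an auxiliary ODE along $\gamma$, after which the equivalence reduces to the monotonicity of that ODE flow. For each such $\gamma$, consider the initial value problem
\[
\dot w(\tau)=L(\gamma(\tau),w(\tau),\dot\gamma(\tau)),\qquad w(0)=a.
\]
Since $L$ is $C^2$ and (L3) gives $\partial L/\partial u\in[-\lambda,0]$, this has a unique solution $W_\gamma(\cdot;a):[0,t]\to\R$ for each $a\in\R$, and the map $\Psi_\gamma:a\mapsto W_\gamma(t;a)$ is a strictly increasing bijection of $\R$ onto itself; let $\Psi_\gamma^{-1}$ denote its inverse.

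My first task will be to prove the two variational identities
\[
h_{x_0,u_0}(x,t)=\inf_\gamma \Psi_\gamma(u_0),\qquad h^{x,u}(x_0,t)=\sup_\gamma \Psi_\gamma^{-1}(u),
\]
where in each case $\gamma$ ranges over Lipschitz curves from $x_0$ to $x$ in time $t$. For the first, given any competitor $\gamma$, the function $f(\tau):=h_{x_0,u_0}(\gamma(\tau),\tau)$ obeys the dynamic-programming inequality $\dot f(\tau)\leq L(\gamma(\tau),f(\tau),\dot\gamma(\tau))$ a.e., coming from the Markov property built into (\ref{baacf1}); since $\partial L/\partial u\leq 0$, a Gronwall-type comparison with the equality solution $W_\gamma(\cdot;u_0)$ yields $f(t)\leq \Psi_\gamma(u_0)$, so $h_{x_0,u_0}(x,t)\leq\inf_\gamma \Psi_\gamma(u_0)$. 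The reverse inequality, together with the attainment of the infimum, is exactly the content of Theorem \ref{IVP}: along the $C^1$ minimizer $\gamma^\ast$ the function $u_1(\tau)=h_{x_0,u_0}(\gamma^\ast(\tau),\tau)$ solves the ODE and so coincides with $W_{\gamma^\ast}(\cdot;u_0)$. The analogous identity for $h^{x,u}$ is obtained by applying the same argument to the backward formula (\ref{2-3}), using that along the $C^1$ backward minimizer $u_2(\tau)=h^{x,u}(\gamma(\tau),t-\tau)$ again satisfies the same ODE with terminal value $u$.

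Granted these two reformulations, the proposition becomes purely algebraic. Suppose $h_{x_0,u_0}(x,t)=u$. Then $\Psi_\gamma(u_0)\geq u$ for every admissible $\gamma$, with equality for some minimizer $\gamma^\ast$. Applying the strictly increasing bijection $\Psi_\gamma^{-1}$ to both sides, this is equivalent to $\Psi_\gamma^{-1}(u)\leq u_0$ for every $\gamma$, with equality for $\gamma^\ast$; by the second variational identity this is exactly $h^{x,u}(x_0,t)=u_0$. The reverse implication is the same chain of equivalences read in reverse, starting from a backward minimizer supplied by Theorem \ref{IVP}.

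The main obstacle is securing Step 2, namely the dynamic-programming inequality $\dot f(\tau)\leq L(\gamma(\tau),f(\tau),\dot\gamma(\tau))$ for an arbitrary competitor $\gamma$ and its analogue for $h^{x,u}$. This is the point at which assumption (L3) is genuinely used: the non-increasing property of $L$ in $u$ is what converts the one-sided Markov estimate into a one-sided ODE comparison and simultaneously ensures well-posedness of the auxiliary ODE uniformly in $\gamma$. Once Step 2 is in hand, the rest of the proof is the short manipulation above, which only exploits that an increasing bijection interchanges ``infimum of the forward endpoint'' with ``supremum of its preimage.''
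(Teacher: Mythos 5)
Your argument is sound in substance, but note first that the paper itself does not prove Proposition \ref{relation}: it is recalled in Appendix \ref{gene} as a known fact from the earlier works \cite{WWY,WWY1,WWY2}, where it is obtained directly from the implicit constructions (monotonicity of the action functions in the base value together with the characteristic structure of minimizers furnished by Theorem \ref{IVP}). Your route is genuinely different: you first re-derive a Herglotz-type representation, $h_{x_0,u_0}(x,t)=\inf_\gamma\Psi_\gamma(u_0)$ and $h^{x,u}(x_0,t)=\sup_\gamma\Psi_\gamma^{-1}(u)$, by attaching to every Lipschitz curve $\gamma$ from $x_0$ to $x$ the Carath\'eodory ODE $\dot w=L(\gamma,w,\dot\gamma)$, and then the ``if and only if'' reduces to the observation that the increasing bijection $\Psi_\gamma$ exchanges the two optimizations, with attainment supplied by the minimizers of Theorem \ref{IVP}. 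This is essentially the equivalence between the implicit variational principle and Herglotz' variational principle established in \cite{CCWY,CCJWY,LTW}. What your approach buys is a transparent, symmetric proof that only uses $|\partial L/\partial u|\le\lambda$, hence also covers the Lipschitz setting of Remark \ref{lipcondi}; what it costs is that the entire burden is shifted onto the two representation formulas, which are not weaker statements than Proposition \ref{relation} itself.

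Two points in your Step 2 need real care. First, the integral form of (\ref{baacf1}) alone (restricting a competitor to $[0,\sigma]$) does not suffice: an inequality $f(\sigma)\le u_0+\int_0^\sigma L(\gamma(\tau),f(\tau),\dot\gamma(\tau))\,d\tau$ with $f$ inside the integral does not force $f\le W_\gamma(\cdot;u_0)$ (already for $L=-\lambda u$ one can cook up $f$ violating the conclusion), so the restart through the Markov property, as you indicate, is essential; and to convert it into the a.e.\ differential inequality you must also invoke the local Lipschitz continuity of $(x,t)\mapsto h_{x_0,u_0}(x,t)$ on $M\times(0,+\infty)$, the small-time limit $h_{y,a}(\gamma(\tau+s),s)\ri a$ along Lipschitz curves (both from \cite{WWY}), and a Lebesgue-point argument, since $\dot\gamma$ is only defined almost everywhere; the backward inequality for $h^{x,u}$ requires the mirror-image versions of these facts. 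Second, the well-posedness and bijectivity of $\Psi_\gamma$ and the Gronwall comparison rest on the two-sided bound $|\partial L/\partial u|\le\lambda$ rather than on the sign condition in (L3); your emphasis on $\partial L/\partial u\le 0$ as the crucial hypothesis is therefore slightly misplaced (the monotone version of the comparison does work, but it is the Lipschitz bound that one should quote, and this is what keeps the proof valid under Remark \ref{lipcondi}). With these points filled in, your proof is complete.
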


If $u_1\leq u_2$, then $h_{x_0,u_2}(x,t)-u_2\leq h_{x_0,u_1}(x,t)-u_1$ for all $(x,t)\in M\times (0,+\infty)$, which together with the monotonicity of $h_{x_0,u_0}(x,t)$ in $u_0$ implies
\begin{proposition}\label{nonehh}
		\[
		|h_{x_0,u}(x,t)-h_{x_0,v}(x,t)|\leq |u-v|
		\]
		for all $u$, $v\in\mathbb{R}$ and all $(x,t)\in M\times (0,+\infty)$.
\end{proposition}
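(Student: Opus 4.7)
The proposition is essentially a packaging of two comparison facts, so my plan is to set out each direction cleanly and then combine them. Fix $x_0 \in M$, $u, v \in \mathbb{R}$, and $(x,t) \in M \times (0,+\infty)$, and assume without loss of generality $u \leq v$. The conclusion $|h_{x_0,u}(x,t) - h_{x_0,v}(x,t)| \leq |u-v|$ follows at once from the sandwich
\[
0 \;\leq\; h_{x_0,v}(x,t) - h_{x_0,u}(x,t) \;\leq\; v - u,
\]
so the entire task is to justify both inequalities.

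The lower inequality is the monotonicity of $h_{x_0,\cdot}(x,t)$, and the upper inequality is the one displayed just before the proposition. I would obtain both from the same Gronwall-type argument based on the Herglotz ODE that any implicit action function satisfies along its minimizers. Concretely, let $\gamma : [0,t] \to M$ be a minimizer for $h_{x_0,u}(x,t)$ provided by Theorem \ref{IVP}, and define along $\gamma$ the functions
\[
\xi_u(s) := h_{x_0,u}(\gamma(s),s), \qquad \dot{\tilde{\xi}}_v(s) = L(\gamma(s), \tilde{\xi}_v(s), \dot{\gamma}(s)), \quad \tilde{\xi}_v(0) = v.
\]
Using $\gamma$ as a competitor in the defining infimum (\ref{baacf1}) for $h_{x_0,v}(x,t)$ yields $h_{x_0,v}(x,t) \leq \tilde{\xi}_v(t)$. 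Setting $\phi := \tilde{\xi}_v - \xi_u$, one has $\phi(0) = v - u \geq 0$ and
\[
\dot{\phi}(s) = L(\gamma(s), \tilde{\xi}_v(s), \dot{\gamma}(s)) - L(\gamma(s), \xi_u(s), \dot{\gamma}(s)) = -a(s)\,\phi(s),
\]
where the mean value theorem and (L3) give $a(s) \in [0,\lambda]$. Hence $\phi(s) = \phi(0) \exp\!\bigl(-\int_0^s a\bigr) \in [0, v-u]$, and evaluating at $s = t$ produces
\[
h_{x_0,v}(x,t) - v \;\leq\; \tilde{\xi}_v(t) - v \;\leq\; \xi_u(t) - u \;=\; h_{x_0,u}(x,t) - u,
\]
which is the upper inequality. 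The symmetric argument, using a minimizer $\gamma'$ for $h_{x_0,v}(x,t)$ and running the comparison with $u$ as the lower initial value, delivers monotonicity $h_{x_0,u}(x,t) \leq h_{x_0,v}(x,t)$ by exactly the same Gronwall mechanism.

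The main technical point is simply the Gronwall step: once one observes that (L3) makes the difference $\phi$ obey $\dot{\phi} = -a(s)\phi$ with $a \in [0,\lambda]$, the sign and magnitude control are automatic, and the implicit nature of $h_{x_0,u_0}$ causes no trouble because the defining ODE for the action is exactly the Herglotz equation appearing in $\phi$. Combining the two inequalities and noting that the roles of $u$ and $v$ are symmetric gives the absolute value bound in the statement.
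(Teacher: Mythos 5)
Your sandwich decomposition is exactly what the paper does (it simply combines the monotonicity of $u_0\mapsto h_{x_0,u_0}(x,t)$ with the translate inequality $h_{x_0,u_2}(x,t)-u_2\leq h_{x_0,u_1}(x,t)-u_1$, both quoted from the earlier works), and the Gronwall computation along a minimizer, using (L3) and the fact from Theorem \ref{IVP} that $s\mapsto h_{x_0,u}(\gamma(s),s)$ solves $\dot\xi=L(\gamma,\xi,\dot\gamma)$ along its own minimizer, is the right mechanism. However, there is a genuine gap at the one step on which both of your inequalities hinge: the claim that ``using $\gamma$ as a competitor in the defining infimum (\ref{baacf1}) for $h_{x_0,v}(x,t)$ yields $h_{x_0,v}(x,t)\leq\tilde\xi_v(t)$.'' The principle (\ref{baacf1}) is \emph{implicit}: inserting the competitor $\gamma$ only gives
\[
h_{x_0,v}(x,t)\;\leq\; v+\int_0^t L\bigl(\gamma(\tau),\,h_{x_0,v}(\gamma(\tau),\tau),\,\dot\gamma(\tau)\bigr)\,d\tau,
\]
where the unknown function $h_{x_0,v}$, not the ODE solution $\tilde\xi_v$, sits in the integrand. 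Because $L$ is non-increasing in $u$, this integral inequality is anti-monotone in the unknown and does not by itself force $h_{x_0,v}(x,t)\leq\tilde\xi_v(t)$: for instance, with $L=-\lambda u$ the comparison solution is $\tilde\xi_v(t)=e^{-\lambda t}v$, yet the constant function $g\equiv v/(1+\lambda t)$ satisfies $g(s)\leq v-\lambda\int_0^s g\,d\tau$ for all $s\in(0,t]$ while $g(t)>e^{-\lambda t}v$. So the inequality you need is true but is not a consequence of the competitor plug-in you cite; the same unproved step reappears in your ``symmetric argument'' for monotonicity.

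What you are implicitly using is Herglotz's (explicit) variational principle, $h_{x_0,u_0}(x,t)=\inf_\gamma u_\gamma(t)$ with $\dot u_\gamma=L(\gamma,u_\gamma,\dot\gamma)$, $u_\gamma(0)=u_0$, whose equivalence with (\ref{baacf1}) is a nontrivial result of \cite{WWY1,CCWY,CCJWY} and is not part of the toolbox stated in Appendix \ref{gene}. To close the gap you should either invoke that equivalence explicitly, or replace the competitor step by an argument inside the implicit framework: e.g.\ use the Markov and short-time properties of $h_{x_0,v}$ to show that $g(s):=h_{x_0,v}(\gamma(s),s)$ satisfies the differential inequality $\dot g(s)\leq L(\gamma(s),g(s),\dot\gamma(s))$ with $g(0^+)\leq v$, and only then run your Gronwall comparison with $\tilde\xi_v$ (this is essentially how the monotonicity and non-expansiveness properties are established in \cite{WWY,WWY2}, which the paper simply cites). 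With that repair the rest of your argument, including the use of (L3) to get $\dot\phi=-a(s)\phi$ with $a\in[0,\lambda]$, is correct.
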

\medskip
%
%
%
%
%

By Proposition \ref{nonehh}, we have
\begin{proposition}
Given $(x_0,x,t)\in M\times M\times (0,+\infty)$, $u,v\in \R$. If $u\geq v$, then $h^{x_0,u}(x,t)-h^{x_0,v}(x,t)\geq u-v$.
\end{proposition}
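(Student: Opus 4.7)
The plan is to reduce the inequality to the already-established Lipschitz bound in Proposition~\ref{nonehh} via the duality between forward and backward implicit action functions given in Proposition~\ref{relation}. More precisely, I would set
\[
u_0 := h^{x_0,u}(x,t), \qquad v_0 := h^{x_0,v}(x,t),
\]
so that Proposition~\ref{relation} translates these identities into
\[
h_{x,u_0}(x_0,t)=u, \qquad h_{x,v_0}(x_0,t)=v.
\]
The point is that the desired inequality is really a statement about the \emph{inverse} of the Lipschitz-in-the-initial-value map $u_0 \mapsto h_{x_0,u_0}(x,t)$.

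Next I would observe that the map $u \mapsto h^{x_0,u}(x,t)$ is monotone non-decreasing. This is noted in the text just before Proposition~\ref{nonehh}: $u_0 \mapsto h_{x_0,u_0}(x,t)$ is monotone non-decreasing, and Proposition~\ref{relation} exhibits $u \mapsto h^{x,u}(x_0,t)$ as its inverse in the appropriate variables, so monotonicity is preserved. Consequently, from $u\geq v$ one obtains $u_0 \geq v_0$, and in particular $|u_0-v_0|=u_0-v_0$.

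Finally, I would apply Proposition~\ref{nonehh} to the forward implicit action function $h_{x,\cdot}(x_0,t)$, yielding
\[
|u-v|=\bigl|h_{x,u_0}(x_0,t)-h_{x,v_0}(x_0,t)\bigr|\leq |u_0-v_0|=u_0-v_0.
\]
Since $u\geq v$, the left-hand side equals $u-v$, so
\[
u-v \leq u_0-v_0 = h^{x_0,u}(x,t)-h^{x_0,v}(x,t),
\]
which is exactly the claim.

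There is no real obstacle here: the only mildly subtle point is justifying that $u\mapsto h^{x_0,u}(x,t)$ inherits monotonicity from $u_0\mapsto h_{x_0,u_0}(x,t)$ through the duality in Proposition~\ref{relation}, and this is essentially immediate since the inverse of a strictly monotone non-decreasing function is again non-decreasing. Everything else is a one-line bookkeeping exercise using the identities stated above.
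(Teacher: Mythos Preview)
Your proposal is correct and follows essentially the same route as the paper: both arguments define $u_0=h^{x_0,u}(x,t)$ and $v_0=h^{x_0,v}(x,t)$, invoke the duality of Proposition~\ref{relation} to rewrite these as $h_{x,u_0}(x_0,t)=u$ and $h_{x,v_0}(x_0,t)=v$, use monotonicity to deduce $u_0\geq v_0$ from $u\geq v$, and then apply Proposition~\ref{nonehh} to conclude. The only cosmetic difference is that the paper phrases the first step as ``one can find $u_0,v_0$'' rather than defining them explicitly via $h^{x_0,\cdot}$.
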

\begin{proof}
By reversibility, one can find $u_0,v_0\in \R$ such that
\[h_{x,u_0}(x_0,t)=u,\quad h_{x,v_0}(x_0,t)=v.\]
If $u\geq v$, then $u_0\geq v_0$. It follows from Proposition \ref{nonehh} that
\[h_{x,u_0}(x_0,t)-h_{x,v_0}(x_0,t)\leq u_0-v_0,\]
which together with Proposition \ref{relation} implies
\[h^{x_0,u}(x,t)-h^{x_0,v}(x,t)\geq u-v.
\]
This completes the proof.
\end{proof}

By \cite[Proposition 3]{WWY4}, we have
\begin{proposition}\label{hupin}
For each $(x_0,u_0,x,t)\in M\times\R\times M\times (0,+\infty)$,
\begin{align}
\bar{h}_{x_0,u_0}(x,t)=-h^{x_0,-u_0}(x,t),\quad \bar{h}^{x_0,u_0}(x,t)=-h_{x_0,-u_0}(x,t),
\end{align}
where $\bar{h}_{x_0,u_0}(x,t)$ and $\bar{h}^{x_0,u_0}(x,t)$ denote the forward and backward action functions with respect to $\bar{H}(x,u,p):=H(x,-u,-p)$ respectively.
\end{proposition}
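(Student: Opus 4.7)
\textbf{Proof proposal for Proposition \ref{hupin}.}
My plan is to prove the first identity by checking that $w(y,s):=-h^{x_0,-u_0}(y,s)$ satisfies the implicit integral equation \eqref{baacf1} defining $\bar h_{x_0,u_0}(x,t)$, and then invoke the uniqueness part of Theorem \ref{IVP} applied to $\bar H$. The second identity will then follow by applying the first identity to $\bar H$ (noting that $\overline{\bar H}=H$). A short Legendre computation (substitute $q=-p$) gives the preliminary formula
\[
\bar L(x,u,v)=\sup_{p}\{\langle v,p\rangle-H(x,-u,-p)\}=\sup_{q}\{\langle -v,q\rangle-H(x,-u,q)\}=L(x,-u,-v),
\]
which is the algebraic heart of the argument.

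To carry out the main step, I would start from \eqref{2-3} with $u_0$ replaced by $-u_0$, multiply through by $-1$, and perform the time-reversal change of variable $\sigma=t-\tau$ together with the reversed curve $\tilde\gamma(\sigma):=\gamma(t-\sigma)$. This bijectively identifies Lipschitz curves with $\gamma(0)=x,\ \gamma(t)=x_0$ and Lipschitz curves with $\tilde\gamma(0)=x_0,\ \tilde\gamma(t)=x$, and transforms $\dot\gamma(\tau)=-\dot{\tilde\gamma}(\sigma)$, so the integrand becomes
\[
L\bigl(\gamma(\tau),h^{x_0,-u_0}(\gamma(\tau),t-\tau),\dot\gamma(\tau)\bigr)=L\bigl(\tilde\gamma(\sigma),-w(\tilde\gamma(\sigma),\sigma),-\dot{\tilde\gamma}(\sigma)\bigr)=\bar L\bigl(\tilde\gamma(\sigma),w(\tilde\gamma(\sigma),\sigma),\dot{\tilde\gamma}(\sigma)\bigr).
\]
After substituting, one gets
\[
w(x,t)=u_0+\inf_{\tilde\gamma(0)=x_0,\ \tilde\gamma(t)=x}\int_0^t\bar L\bigl(\tilde\gamma(\sigma),w(\tilde\gamma(\sigma),\sigma),\dot{\tilde\gamma}(\sigma)\bigr)\,d\sigma,
\]
which is exactly \eqref{baacf1} for the system with Lagrangian $\bar L$. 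The initial condition $w(x_0,0^+)=u_0$ is immediate from the limit $\lim_{s\to 0^+}h^{x_0,-u_0}(x_0,s)=-u_0$ guaranteed by Theorem \ref{IVP}. By the uniqueness part of Theorem \ref{IVP} applied to $\bar H$, we conclude $w\equiv \bar h_{x_0,u_0}$, i.e.\ $\bar h_{x_0,u_0}(x,t)=-h^{x_0,-u_0}(x,t)$. For the second identity, since $\bar{\bar H}=H$, applying the first identity with the roles of $H$ and $\bar H$ swapped yields $h_{x_0,u_0}(x,t)=-\bar h^{x_0,-u_0}(x,t)$, and replacing $u_0$ by $-u_0$ gives $\bar h^{x_0,u_0}(x,t)=-h_{x_0,-u_0}(x,t)$.

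The only delicate point is justifying the uniqueness invoked at the end: two continuous functions on $M\times(0,+\infty)$ satisfying the same implicit variational relation \eqref{baacf1} with the same initial limit must coincide. This is a standard Gronwall-type argument built on the bound $|\partial_u L|\le\lambda$ (equivalently, on the Lipschitz estimate Proposition \ref{nonehh}) and is precisely what Theorem \ref{IVP} encodes. The rest of the proof is pure bookkeeping of the time reversal, and the identity $\bar L(x,u,v)=L(x,-u,-v)$ is what makes the signs match up. Conceptually, the statement reflects that the involution $(x,u,p,\tau)\mapsto (x,-u,-p,t-\tau)$ sends contact trajectories of $H$ to contact trajectories of $\bar H$ while reversing the roles of forward and backward endpoints, which is why the forward action of $\bar H$ is related to the negative of the backward action of $H$ at the sign-flipped $u$-parameter.
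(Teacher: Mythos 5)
Your computation is correct and is essentially the natural direct proof, but note that the paper itself does not prove Proposition \ref{hupin} at all: it simply quotes it from \cite[Proposition 3]{WWY4}. So there is no internal argument to compare with; your verification — the Legendre identity $\bar L(x,u,v)=L(x,-u,-v)$, the time reversal $\sigma=t-\tau$, $\tilde\gamma(\sigma)=\gamma(t-\sigma)$ turning \eqref{2-3} (with $u_0$ replaced by $-u_0$, multiplied by $-1$) into the implicit relation \eqref{baacf1} for $\bar L$, and the deduction of the second identity from the first via $\bar{\bar H}=H$ and $u_0\mapsto-u_0$ — is exactly the expected route, and the bookkeeping checks out.

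The one step you should not wave at is the uniqueness you invoke at the end. Theorem \ref{IVP}, as stated in Appendix \ref{gene}, asserts only the \emph{existence} of continuous functions satisfying \eqref{baacf1} and \eqref{2-3}; there is no ``uniqueness part'' available to quote from this paper. Uniqueness of the implicitly defined action function is true and is part of the implicit variational principle in the original references \cite{SWY,WWY,WWY1}, so the clean fix is to cite it there; if instead you want to prove it, your sketched Gronwall bound on $\phi(\tau):=\sup_{y\in M}|w_1(y,\tau)-w_2(y,\tau)|$ is delicate, because both candidate functions blow up as $\tau\to0^+$ for $y\neq x_0$, so you must first show the \emph{difference} is bounded and integrable near $\tau=0$ (e.g.\ via the estimates behind Proposition \ref{nonehh}) or run the comparison along minimizing curves, where the $u$-components tend to $u_0$. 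An alternative that avoids the issue entirely is to use the explicit (Herglotz-type) formulation of the action functions, in which $u_\gamma$ solves $\dot u_\gamma=L(\gamma,u_\gamma,\dot\gamma)$ with the appropriate endpoint condition and one takes an infimum (resp.\ supremum) over curves; there uniqueness is just ODE uniqueness in $u$, and the same flip $(u,\tau)\mapsto(-u,t-\tau)$ yields both identities. Finally, add one sentence observing that $\bar H$ satisfies (H1)--(H2) and $|\partial\bar H/\partial u|\le\lambda$ but not (H3) itself (it is non-increasing in $u$), so that $\bar h_{x_0,u_0}$ and $\bar h^{x_0,u_0}$ are well defined precisely in the setting of Remark \ref{lipcondi}.
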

For the discounted case $L(x,u,v):=-\lambda u+l(x,v)$, ${h}_{x_0,u_0}(x,t)$
 can be reduced to
 \begin{equation}
 {h}_{x_0,u_0}(x,t)=e^{-\lambda t}u_0+\inf_{\gamma}\int_0^{t} e^{\lambda s}l(\gm(s),\dot{\gm}(s)) ds,
 \end{equation}
 where the  infimum is taken among the Lipschitz continuous curves $\gm:[0,t]\ri M$ with $\gm(0)=x_0$ and $\gm(t)=x$.
  ${h}^{x_0,u_0}(x,t)$ can be reduced to
  \begin{equation}
 {h}^{x_0,u_0}(x,t)=e^{\lambda t}u_0-\inf_{\gamma}\int_0^{t} e^{\lambda s}l(\gm(s),\dot{\gm}(s)) ds,
 \end{equation}
where the  infimum is taken among the Lipschitz continuous curves $\gm:[0,t]\ri M$ with $\gm(0)=x$ and $\gm(t)=x_0$.

In \cite{WWY2}, the authors introduced
\begin{definition}[Globally minimizing curves]
	 A curve $(x(\cdot),u(\cdot)):\mathbb{R}\to M\times\mathbb{R}$ is called globally minimizing, if it is locally Lipschitz and
		 for each $t_1$, $t_2\in\mathbb{R}$ with $t_1< t_2$, there holds
		\begin{align}
		u(t_2)=h_{x(t_1),u(t_1)}(x(t_2),t_2-t_1),
		\end{align}
where $h_{\cdot,\cdot}(\cdot,\cdot)$ denotes the forward  action function associated with  $L$ (the Legendre transformation of $H$, see (\ref{baacf1}) below).
\end{definition}
 Moreover, one can define
\begin{definition}[Static curves and orbits]\label{stcont}
	A curve $(x(\cdot),u(\cdot)):\mathbb{R}\to M\times\mathbb{R}$ is called static, if it is globally minimizing and for each $t_1, t_2\in\mathbb{R}$, there holds
	\begin{equation}\label{3-399}
	u(t_2)=\inf_{s>0}h_{x(t_1),u(t_1)}(x(t_2),s).
\end{equation}
If a curve $(x(\cdot),u(\cdot)):\R\ri M\times\R$  is static , then $(x(t),u(t),p(t))$ with $t\in\mathbb{R}$  is an orbit of $\Phi_t$, where $p(t)=\frac{\partial L}{\partial v}(x(t),u(t),\dot{x}(t))$. We call it  a  static  orbit of $\Phi_t$.
\end{definition}

\begin{definition}[Aubry set]\label{audeine99}
We call  the set of all static orbits the Aubry set of $H$, denoted by $\tilde{\mathcal{A}}$. We call $\mathcal{A}:=\pi\tilde{\mathcal{A}}$ the projected Aubry set.
\end{definition}

\section{Proof of Lemma \ref{ke}}\label{prke}

\begin{lemma}\label{exyi}
Let $\{u_i\}_{i\in I}$ be a family of continuous functions on $M$. Then
\[\inf_{i\in I}T_t^-u_i(x)=T_t^-\left(\inf_{i\in I}u_i(x)\right),\quad \forall x\in M.\]
\end{lemma}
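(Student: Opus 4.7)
The plan is to show the two inequalities separately, using only the definition of the backward Lax--Oleinik semigroup via the implicit variational principle,
\[
T_t^-\varphi(x)=\inf_{y\in M}h_{y,\varphi(y)}(x,t),
\]
together with the $1$-Lipschitz dependence of $h_{y,u_0}(x,t)$ on $u_0$ provided by Proposition \ref{nonehh}. Set $v(x):=\inf_{i\in I}u_i(x)$.

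First I would dispose of the easy direction $T_t^-v(x)\leq \inf_{i\in I}T_t^-u_i(x)$. Since $v\leq u_i$ pointwise, Proposition \ref{nonehh} gives $h_{y,v(y)}(x,t)\leq h_{y,u_i(y)}(x,t)+(u_i(y)-v(y))$, and more usefully the monotonicity $h_{y,v(y)}(x,t)\leq h_{y,u_i(y)}(x,t)$ (this monotonicity is what underlies Proposition \ref{nonehh}). Taking $\inf_{y\in M}$ on both sides yields $T_t^-v(x)\leq T_t^-u_i(x)$ for every $i\in I$, hence $T_t^-v(x)\leq \inf_{i\in I}T_t^-u_i(x)$.

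For the nontrivial reverse inequality, fix $x\in M$ and $\varepsilon>0$. By the variational formula for $T_t^-v(x)$, choose $y_\varepsilon\in M$ with
\[
h_{y_\varepsilon,v(y_\varepsilon)}(x,t)\leq T_t^-v(x)+\varepsilon.
\]
Since $v(y_\varepsilon)=\inf_{i\in I}u_i(y_\varepsilon)$, pick $i_\varepsilon\in I$ with $u_{i_\varepsilon}(y_\varepsilon)\leq v(y_\varepsilon)+\varepsilon$. Proposition \ref{nonehh} then gives
\[
h_{y_\varepsilon,u_{i_\varepsilon}(y_\varepsilon)}(x,t)\leq h_{y_\varepsilon,v(y_\varepsilon)}(x,t)+\varepsilon\leq T_t^-v(x)+2\varepsilon.
\]
Since $T_t^-u_{i_\varepsilon}(x)\leq h_{y_\varepsilon,u_{i_\varepsilon}(y_\varepsilon)}(x,t)$ by the variational formula applied to $u_{i_\varepsilon}$, we conclude $\inf_{i\in I}T_t^-u_i(x)\leq T_t^-v(x)+2\varepsilon$. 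Letting $\varepsilon\to 0^+$ yields the desired inequality.

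The only delicate point I foresee is handling the possibility that $v=\inf_{i}u_i$ is not continuous (the family $I$ may be uncountable); however, the implicit formula $T_t^-\varphi(x)=\inf_y h_{y,\varphi(y)}(x,t)$ makes perfect sense for any bounded function $\varphi$ (one simply needs $v$ to be finite-valued to invoke Proposition \ref{nonehh}), so no continuity of $v$ is actually used in the argument above. If the statement is meant only for $v$ continuous, compactness of $M$ ensures that the infimum defining $T_t^-v(x)$ is attained at some $y_\varepsilon$ with $\varepsilon=0$, and the argument is the same.
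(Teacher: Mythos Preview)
Your proof is correct and follows essentially the same approach as the paper. The paper's argument is just a slightly more compressed version: it uses the monotonicity (and implicitly the continuity in $u_0$, i.e.\ Proposition \ref{nonehh}) to write $h_{y,\inf_i u_i(y)}(x,t)=\inf_i h_{y,u_i(y)}(x,t)$ in one step, and then swaps the two infima over $y$ and $i$; your $\varepsilon$-argument unpacks exactly this exchange.
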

\begin{proof}
Let us recall
\[T_t^-\left(\inf_{i\in I}u_i(x)\right)=\inf_{y\in M}h_{y,\inf_{i\in I}u_i(y)}(x,t).\]
Note that the monotonicity of $h_{y,u}(x,t)$ w.r.t. $u$, it follows that
\[\inf_{y\in M}h_{y,\inf_{i\in I}u_i(y)}(x,t)=\inf_{y\in M}\inf_{i\in I}h_{y,u_i(y)}(x,t).\]
Since $y$ is independent of $i$, then
\[\inf_{y\in M}\inf_{i\in I}h_{y,u_i(y)}(x,t)=\inf_{i\in I}\inf_{y\in M}h_{y,u_i(y)}(x,t)=\inf_{i\in I}T_t^-u_i(x).\]
This completes the proof of Lemma \ref{exyi}.
\end{proof}

For each $\tau\geq 0$,
let \[w(x,\tau):=\inf_{s>0}h_{x(-\tau),u(-\tau)}(x,s).\] Then  $w(x)=\inf_{\tau\geq 0}w(x,\tau)$.

\medskip
\noindent  {\bf Item (i)}:
We divide the proof of Item (i) into three steps.

\medskip
\noindent  {\bf Step 1}: We prove that for each $0\leq \tau\leq \tau'$, $x\in M$, $w(x,\tau')\leq w(x,\tau)$. Let $\Delta:=\tau'-\tau$, by Markov property, we have
\begin{align*}
w(x,\tau')&\leq \inf_{s>\Delta}h_{x(-\tau'),u(-\tau')}(x,s)\\
&\leq \inf_{s>\Delta}h_{x(-\tau),h_{x(-\tau'),u(-\tau')}(x(-\tau),\Delta)}(x,s-\Delta)\\
&=\inf_{s>\Delta}h_{x(-\tau),u(-\tau)}(x,s-\Delta)\\
&=\inf_{t>0}h_{x(-\tau),u(-\tau)}(x,t)\\
&=w(x,\tau).
\end{align*}

\medskip
\noindent  {\bf Step 2}:  We prove that the family $\{w(x,\tau)\}_{\tau\geq 0}$ is uniformly bounded. By Step 1,
\[w(x,\tau)\leq w(x,0)=\inf_{s>0}h_{x(0),u(0)}(x,s)\leq h_{x(0),u(0)}(x,1),\]
which implies $\{w(x,\tau)\}_{\tau\geq 0}$ is uniformly bounded from above.

On the other hand, by definition and Markov property, for each $s>0$,
\begin{align*}
u(0)&=\inf_{s>0}h_{x(-\tau),u(-\tau)}(x(0),s)\\
&\leq h_{x(-\tau),u(-\tau)}(x(0),s+1)\\
&\leq h_{x,h_{x(-\tau),u(-\tau)}(x,s)}(x(0),1),
\end{align*}
which together with Proposition \ref{relation} implies
\[h_{x(-\tau),u(-\tau)}(x,s)\geq h^{x(0),u(0)}(x,1).\]
By the arbitrariness of $s$, it yields $\{w(x,\tau)\}_{\tau\geq 0}$ is uniformly bounded from below.

\medskip
\noindent  {\bf Step 3}:  We prove that the family $\{w(x,\tau)\}_{\tau\geq 2}$ is  equi-Lipschitz continuous. For $\tau\geq 2$, by Markov property,
\[\inf_{s>1}h_{x(-\tau-1),u(-\tau-1)}(x,s)\leq \inf_{s>1}h_{x(-\tau),u(-\tau)}(x,s-1)=\inf_{s'>0}h_{x(-\tau),u(-\tau)}(x,s')=w(x,\tau),\]
which together with the definition of $w(x,\tau)$ implies for each $x\in M$,
\[w_1(x,\tau):=\inf_{s>1}h_{x(-\tau-1),u(-\tau-1)}(x,s)\leq w(x,\tau)\leq \inf_{s>1}h_{x(-\tau),u(-\tau)}(x,s)=:w_2(x,\tau).\]
It follows that for each $\tau\geq 2$,
\begin{align*}
|w(x,\tau)-w(y,\tau)|\leq \max\left\{|w_2(x,\tau)-w_1(y,\tau)|,|w_2(y,\tau)-w_1(x,\tau)|\right\}.
\end{align*}
Note that for each $t\geq 0$,
\begin{align*}
\inf_{s>1}h_{x(-t),u(-t)}(x,s)&=\inf_{s>1}T_1^-h_{x(-t),u(-t)}(x,s-1)\\
&=T_1^-\inf_{s>1}h_{x(-t),u(-t)}(x,s-1)\\
&=\inf_{z\in M}h_{z,\inf_{s>1}h_{x(-t),u(-t)}(z,s-1)}(x,1)\\
&=\inf_{z\in M}h_{z,\inf_{s'>0}h_{x(-t),u(-t)}(z,s')}(x,1)\\
&=\inf_{z\in M}h_{z,w(z,t)}(x,1).
\end{align*}
In particular, there hold
\[w_1(x,\tau)=\inf_{z\in M}h_{z,w(z,\tau+1)}(x,1),\quad w_2(y,\tau)=\inf_{z\in M}h_{z,w(z,\tau)}(y,1).\]
Then we have
\begin{align*}
&|w_2(x,\tau)-w_1(y,\tau)|\\
=&|\inf_{z\in M}h_{z,w(z,\tau)}(x,1)-\inf_{z\in M}h_{z,w(z,\tau+1)}(y,1)|\\
\leq &\sup_{z\in M}|h_{z,w(z,\tau)}(x,1)-h_{z,w(z,\tau+1)}(y,1)|.
\end{align*}
By Step 2 in the proof of Item (i), there exists $K>0$ independent of $t$ such that $\|w(x,t)\|_\infty\leq K$ for each $t\geq 0$. Since $h_{\cdot,\cdot}(\cdot,1)$ is Lipschitz on $M\times [-K,K]\times M$ with some Lipschitz constant $\kappa>0$.  It follows that
\[|w_2(x,\tau)-w_1(y,\tau)|\leq \kappa d(x,y), \quad \forall \tau\geq 2.\]
Similarly, we have
\[|w_2(y,\tau)-w_1(x,\tau)|\leq \kappa d(x,y), \quad \forall \tau\geq 2.\]
Thus, the family $\{w(x,\tau)\}_{\tau\geq 2}$ is  equi-Lipschitz continuous.

Combining with Step 1 and Step 2 in the proof of Item (i), it follows that
\[
w(x)=\inf_{\tau\geq 0}\inf_{s>0}h_{x(-\tau),u(-\tau)}(x,s)=\lim_{\tau\ri +\infty}w(x,\tau),\quad \forall x\in M,
\]
which is Lipschitz continuous on $M$.

\medskip
\noindent  {\bf Item (ii)}: Under the assumptions (H1)-(H2), this item follows from Proposition \ref{con} directly. In light of Remark \ref{lipcondi}, we prove it under $|\frac{\partial H}{\partial u}|\leq \lambda$ instead of (H3).

We prove $w_\infty\in \cS_-$. Namely, $T_t^-w_\infty=w_\infty$ for each $t\geq 0$. First of all, we show $w(x)\leq T^-_tw(x)$ for all $t\geq 0$ and $x\in M$.  By Lemma \ref{exyi}, for each $t\geq 0$,
\begin{align*}
T_t^-w(x)&=\inf_{\tau\geq 0}\inf_{s> 0}T_t^-h_{x(-\tau),u(-\tau)}(x,s)\\
&=\inf_{\tau\geq 0}\inf_{s> 0}h_{x(-\tau),u(-\tau)}(x,t+s)\\
&=\inf_{\tau\geq 0}\inf_{s'> t}h_{x(-\tau),u(-\tau)}(x,s')\\
&\geq \inf_{\tau\geq 0}\inf_{s> 0}h_{x(-\tau),u(-\tau)}(x,s)\\
&=w(x).
\end{align*}

It remains to verify that the family $\{T_t^-w(x)\}_{t\geq 0}$ is uniformly bounded and equi-Lipschitz continuous. Note that $w$ is Lipschitz continuous and $T_t^-w\geq w$ on $M$,  $\{T_t^-w(x)\}_{t\geq 0}$ is uniformly bounded from below. On the other hand, by Markov property and Lemma \ref{exyi},
\begin{align*}
T_t^-w(x)&=\inf_{\tau\geq 0}\inf_{s>0}h_{x(-\tau),u(-\tau)}(x,t+s)\\
&\leq \inf_{s>0}h_{x(-t),u(-t)}(x,t+s)\\
&\leq \inf_{s>0}h_{x(0),u(0)}(x,s)\\
&\leq h_{x(0),u(0)}(x,1),
\end{align*}
which implies $\{T_t^-w(x)\}_{t\geq 0}$ is uniformly bounded from above.

The equi-Lipschitz continuity follows from the boundedness of $\{T_t^-w(x)\}_{t\geq 0}$. In fact, for $t>1$,
 \begin{align*}
|T_t^-w(x)-T_t^-w(y)|&=|T_1^-\circ T_{t-1}^-w(x)-T_1^-\circ T_{t-1}^-w(y)|\\
&=|\inf_{z\in M}h_{z,T_{t-1}^-w(z)}(x,1)-\inf_{z\in M}h_{z,T_{t-1}^-w(z)}(y,1)|\\
&\leq \sup_{z\in M}|h_{z,T_{t-1}^-w(z)}(x,1)-h_{z,T_{t-1}^-w(z)}(y,1)|.
\end{align*}
There exists $K>0$ independent of $t$ such that $|T_t^-w(x)|\leq K$ for each $t\geq 0$ and $x\in M$. Since $h_{\cdot,\cdot}(\cdot,1)$ is Lipschitz on $M\times [-K,K]\times M$ with some Lipschitz constant $\kappa>0$.  It follows that
\[|T_t^-w(x)-T_t^-w(y)|\leq \kappa d(x,y), \quad \forall t> 1.\]

\medskip
\noindent  {\bf Item (iii)}: We prove that $w_\infty(x(-\tau))=u(-\tau)$ for each $\tau\geq 0$.  We divide the proof  into two steps.

\medskip
\noindent  {\bf Step 1}:  We  prove $w(x(\sigma))=u(\sigma)$ for each $\sigma\leq 0$. On one hand, since $(x(\cdot),u(\cdot))$ is a negatively semi-static curve, for all $\sigma\leq 0$, we have \[w(x(\sigma))\leq\inf_{-\tau\leq\sigma}\inf_{s>0}h_{x(-\tau),u(-\tau)}(x(\sigma),s)=\inf_{-\tau\leq\sigma}u(\sigma)=u(\sigma).\]On the other hand, we attempt to prove
$w(x(\sigma))\geq u(\sigma)$ for each $\sigma\leq 0$. Let
\[v_{\tau}(x):=\inf_{s>0}h_{x(-\tau),u(-\tau)}(x,s).\]
We first show $v_\tau(x(\sigma))\geq u(\sigma)$ for all $0\geq-\tau> \sigma$. Suppose not. There is $-\tau'> \sigma$ such that $v_{\tau'}(x(\sigma))< u(\sigma)$. Then there exists $s'>0$ such that $h_{x(-\tau'),u(-\tau')}(x(\sigma),s')<u(\sigma)$. Since $(x(\cdot),u(\cdot)):(-\infty,0]\ri M\times\R$ is a negatively semi-static curve, then  we have
\begin{align*}
u(-\tau')&=\inf_{s>0}h_{x(-\tau'),u(-\tau')}(x(-\tau'),s)\\
&\leq h_{x(-\tau'),u(-\tau')}(x(-\tau'),s'-\tau'-\sigma)\\
&\leq h_{x(\sigma),h_{x(-\tau'),u(-\tau')}(x(\sigma),s')}(x(-\tau'),-\tau'-\sigma)\\
&<h_{x(\sigma),u(\sigma)}(x(-\tau'),-\tau'-\sigma)\\
&=u(-\tau'),
\end{align*}
which is a contradiction. Therefore, by definition, for any $\sigma \leq 0$ we have
\begin{align*}
w(x(\sigma))&=\inf_{\tau\geq 0}\inf_{s>0}h_{x(-\tau),u(-\tau)}(x(\sigma),s)\\
&=\min\{\inf_{-\tau\leq \sigma}v_\tau(x(\sigma)),\inf_{0\geq -\tau> \sigma}v_\tau(x(\sigma))\}\\
&\geq\min\{\inf_{-\tau\leq \sigma}v_\tau(x(\sigma)),u(\sigma)\}.
\end{align*}
Since \[\inf_{-\tau\leq \sigma}v_\tau(x(\sigma))=\inf_{-\tau\leq \sigma}\inf_{s>0}h_{x(-\tau),u(-\tau)}(x(\sigma),s)=\inf_{-\tau\leq \sigma}u(\sigma)=u(\sigma),\] then we have $w(x(\sigma))\geq u(\sigma)$ for all $\sigma\leq 0$.

\medskip
\noindent  {\bf Step 2}:  We show $T^-_tw(x(\sigma))=w(x(\sigma))$ for each $t>0$. By the proof of Item (ii), we have $T^-_tw(x(\sigma))\geq w(x(\sigma))$ for each $t>0$. Note that for $t>0$, $\sigma\leq 0$,
\begin{align*}
T^-_tw(x(\sigma))&=\inf_{y\in M}h_{y,w(y)}(x(\sigma),t)\\
&\leq h_{x(\sigma-t),w(x(\sigma-t))}(x(\sigma),t)\\
&\leq h_{x(\sigma-t),u(\sigma-t)}(x(\sigma),t)\\
&=u(\sigma)\\
&=w(x(\sigma)).
\end{align*}
Hence  $T^-_tw(x(\sigma))=w(x(\sigma))=u(\sigma)$ for each $t>0$ and $\sigma\leq 0$. Moreover, we have
\[
w_\infty(x(-\tau))=\lim_{t\rightarrow+\infty}T^-_tw(x(-\tau))=\lim_{t\rightarrow+\infty}u(-\tau)=u(-\tau),\quad \forall \tau\geq 0.
\]
This completes the proof.

\section{On semi-static curves}\label{ossc}
\begin{proposition}\label{Mar-new}
	Given any $x$, $y$ and $z\in M$, $u_1$, $u_2$ and $u_3\in\mathbb{R}$, $t$, $s>0$, if
	\[
	h_{x,u_1}(y,t)=u_2,\quad h_{y,u_2}(z,s)=u_3,\quad h_{x,u_1}(z,t+s)=u_3,
	\]
	then
	\[
    \gamma(\sigma):=\left\{\begin{array}{ll}
    \gamma_1(\sigma),\ \ \qquad\sigma\in[0,t],\\
   \gamma_2(\sigma-t),\,\quad\sigma\in[t,t+s],
    \end{array}\right.
	\]
	is a minimizer of $h_{x,u_1}(z,t+s)$, where $\gamma_1:[0,t]\to M$ is a minimizer of $h_{x,u_1}(y,t)$ and $\gamma_2:[0,s]\to M$ is a minimizer of $h_{y,u_2}(z,s)$.
\end{proposition}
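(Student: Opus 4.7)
The idea is to show that the concatenated curve $\gamma$ realizes the infimum defining $h_{x,u_1}(z,t+s)$. For any Lipschitz curve $\xi:[0,t+s]\to M$ with $\xi(0)=x$, the implicit variational principle (Theorem \ref{IVP}) associates a real-valued function $U_\xi$ solving the Carath\'eodory ODE $\dot{U}_\xi(\tau)=L(\xi(\tau),U_\xi(\tau),\dot{\xi}(\tau))$ with $U_\xi(0)=u_1$, and one has $h_{x,u_1}(\xi(t+s),t+s)\le U_\xi(t+s)$. It therefore suffices to verify that $U_\gamma(t+s)=u_3$, for then the hypothesis $h_{x,u_1}(z,t+s)=u_3$ forces equality and $\gamma$ is a minimizer.

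On $[0,t]$ the restriction $\gamma|_{[0,t]}$ coincides with the minimizer $\gamma_1$ of $h_{x,u_1}(y,t)$. By Theorem \ref{IVP}, the triple $(\gamma_1(\sigma), h_{x,u_1}(\gamma_1(\sigma),\sigma), p_1(\sigma))$ is an orbit of \eqref{c} on $(0,t]$ with $h_{x,u_1}(\gamma_1(\sigma),\sigma)\to u_1$ as $\sigma\to 0^+$; by the third equation of \eqref{c} and the Legendre duality $\dot{x}\cdot p-H=L$, the function $\sigma\mapsto h_{x,u_1}(\gamma_1(\sigma),\sigma)$ satisfies the same Carath\'eodory ODE as $U_\gamma$ with the same initial datum $u_1$. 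Uniqueness forces $U_\gamma(\sigma)=h_{x,u_1}(\gamma_1(\sigma),\sigma)$ on $[0,t]$, and in particular $U_\gamma(t)=h_{x,u_1}(y,t)=u_2$. Repeating the argument on $[t,t+s]$ with $\gamma_2$, initial time $t$, and initial value $u_2$ yields $U_\gamma(\sigma)=h_{y,u_2}(\gamma_2(\sigma-t),\sigma-t)$ on $[t,t+s]$, so $U_\gamma(t+s)=h_{y,u_2}(z,s)=u_3$, as required.

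The main technical point requiring care is the non-smooth junction of $\gamma_1$ and $\gamma_2$ at $\sigma=t$: the concatenation $\gamma$ is only Lipschitz there, not $C^1$, but this regularity is exactly what is needed to define $U_\gamma$ via the Carath\'eodory ODE, and the two local solutions splice consistently because their values match at $\sigma=t$ (both equal $u_2$). The uniqueness of $U_\gamma$ along a Lipschitz curve follows from the local Lipschitz dependence of $L$ on $u$ guaranteed by the $C^3$ hypothesis on $H$. Once this regularity bookkeeping is handled, the proposition reduces to the two clean identifications in the previous paragraph, each of which is just the statement that the implicit action along a minimizer is recovered by integrating $L$ through the associated contact orbit.
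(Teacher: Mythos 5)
Your ODE bookkeeping on each leg is fine, but the final inference contains a genuine gap. In this paper, a minimizer of $h_{x,u_1}(z,t+s)$ means, by the definition given right after Theorem \ref{IVP}, a curve attaining the infimum in the implicit formula (\ref{baacf1}), whose integrand is $L\big(\gamma(\tau),h_{x,u_1}(\gamma(\tau),\tau),\dot{\gamma}(\tau)\big)$ --- the implicit action function itself, not your Carath\'eodory solution $U_\gamma$. What you actually prove is $U_\gamma(t+s)=u_3=h_{x,u_1}(z,t+s)$. To pass from this to attainment of the infimum in (\ref{baacf1}) you would need $U_\gamma(\sigma)=h_{x,u_1}(\gamma(\sigma),\sigma)$ along the whole concatenation; your uniqueness argument gives this only on $[0,t]$. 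On $[t,t+s]$ you obtain $U_\gamma(\sigma)=h_{y,u_2}(\gamma_2(\sigma-t),\sigma-t)$, and its coincidence with $h_{x,u_1}(\gamma(\sigma),\sigma)$ is precisely the crux of the proposition: since in general $h_{x,u_1}(\gamma(\sigma),\sigma)\le U_\gamma(\sigma)$ and $L$ is non-increasing in $u$, the comparison between $\int L(\gamma,h_{x,u_1}(\gamma(\tau),\tau),\dot{\gamma})\,d\tau$ and $\int L(\gamma,U_\gamma,\dot{\gamma})\,d\tau$ goes the wrong way, so equality of the endpoint values does not by itself force $\gamma$ to attain the infimum in (\ref{baacf1}). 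The missing identity $h_{x,u_1}(\gamma(\sigma),\sigma)=h_{y,u_2}(\gamma(\sigma),\sigma-t)$ on $[t,t+s]$ is exactly what the paper establishes (by contradiction, combining the Markov property, the strict monotonicity of $h_{x_0,u_0}$ in $u_0$, and the hypothesis $h_{x,u_1}(z,t+s)=u_3$), after which the two action integrals are spliced directly in (\ref{baacf1}); you need to add this step (or an equivalent comparison argument) to close your proof.

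A secondary issue: the inequality $h_{x,u_1}(\xi(T),T)\le U_\xi(T)$ for an \emph{arbitrary} Lipschitz curve $\xi$ is not part of Theorem \ref{IVP} as stated in this paper; it is the Herglotz-type characterization of the implicit action function available in the cited literature (e.g.\ \cite{WWY1,CCJWY}), while here the minimality of $h$ is only invoked along solutions of (\ref{c}). The fact is true, so this is a matter of citation or proof rather than an error, but together with the gap above it means your argument, as written, does not yet yield the proposition in the sense in which it is stated and later used (for instance, to obtain the $C^1$ regularity of the concatenated curve in Lemma \ref{lem3.2}).
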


\begin{proof}
	By Markov property, we have
	\[
	h_{x,u_1}(\gamma(\sigma),\sigma)\leq h_{y,h_{x,u_1}(y,t)}(\gamma(\sigma),\sigma-t)= h_{y,u_2}(\gamma(\sigma),\sigma-t),\quad \forall \sigma\in[t,t+s].
	\]
	We assert that the above inequality is in fact an equality, i.e.,
	 \begin{align}\label{2-1}
	 h_{x,u_1}(\gamma(\sigma),\sigma)= h_{y,u_2}(\gamma(\sigma),\sigma-t),\quad \forall \sigma\in[t,t+s].
	 \end{align}
	 If the assertion is true, then
	 \begin{align*}
	 &u_1+\int_0^{t+s}L(\gamma(\sigma),h_{x,u_1}(\gamma(\sigma),\sigma),\dot{\gamma}(\sigma))d\sigma\\
=& u_1+\int_0^tL(\gamma_1(\sigma),h_{x,u_1}(\gamma_1(\sigma),\sigma),\dot{\gamma}_1(\sigma))d\sigma\\
&+\int_t^{t+s}L(\gamma(\sigma),h_{x,u_1}(\gamma(\sigma),\sigma),\dot{\gamma}(\sigma))d\sigma\\
	 =&h_{x,u_1}(y,t)+\int_t^{t+s}L(\gamma(\sigma),h_{y,u_2}(\gamma(\sigma),\sigma-t),\dot{\gamma}(\sigma))d\sigma\\
	 =&u_2+\int_0^sL(\gamma_2(\tau),h_{y,u_2}(\gamma_2(\tau),\tau),\dot{\gamma}_2(\tau))d\tau\\
	 =&h_{y,u_2}(z,s)\\
	 =&h_{x,u_1}(z,t+s),
	 \end{align*}
	 which shows that $\gamma$ is a minimizer of $h_{x,u_1}(z,t+s)$.
	
	 Therefore, we only need to show (\ref{2-1}) holds. Suppose not. There exists $t_0\in[t,t+s)$ such that
	 \[
	  h_{x,u_1}(\gamma(t_0),t_0)< h_{y,u_2}(\gamma(t_0),t_0-t).
	  \]
	 From Markov and Monotonicity properties, we get
	 \begin{align*}
	 u_3&=h_{x,u_1}(z,t+s)\\
&\leq h_{\gamma(t_0),h_{x,u_1}(\gamma(t_0),t_0)}(z,t+s-t_0)\\
&<h_{\gamma(t_0),h_{y,u_2}(\gamma(t_0),t_0-t)}(z,t+s-t_0)\\
&=h_{y,u_2}(z,s)=u_3,
	 \end{align*}
	 which is a contradiction. The proof is complete.
\end{proof}

\begin{proposition}\label{pr1188}
	Let $(x(\cdot),u(\cdot)):(-\infty,0]\ri M\times\mathbb{R}$ be a negatively semi-static curve. Let $\gm(t):=x(t)$ for each $t\in (-\infty,0]$. Then there exists $v_-\in \cS_-$ such that $\gm:(-\infty,0]\ri M$ is ($v_-,L,0$)-calibrated. Conversely, given $v_-\in \cS_-$, let $\gm: (-\infty,0]\ri M$ be ($v_-,L,0$)-calibrated and let $x(t):=\gm(t)$, $u(t):=v_-(\gm(t))$ for each $t\in (-\infty,0]$. Then $(x(\cdot),u(\cdot)):(-\infty,0]\ri M\times\mathbb{R}$ is a negatively semi-static curve.
\end{proposition}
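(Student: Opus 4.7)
The plan is to prove the two directions separately, leveraging Lemma \ref{ke} for the forward implication and a standard calibration argument for the converse.

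\textbf{Forward direction} (semi-static $\Rightarrow$ calibrating $v_-$ exists). This direction is essentially already done by Lemma \ref{ke}. Given a negatively semi-static curve $(x(\cdot),u(\cdot))$, Lemma \ref{ke} produces $w_\infty\in \mathcal{S}_-$ with $w_\infty(x(-\tau))=u(-\tau)$ for all $\tau\geq 0$. Set $v_-:=w_\infty$. To verify the calibration identity
\[
v_-(\gamma(t_2))-v_-(\gamma(t_1))=\int_{t_1}^{t_2}L(\gamma(s),v_-(\gamma(s)),\dot\gamma(s))\,ds, \quad t_1\leq t_2\leq 0,
\]
I would use the negative minimality of $(x,u)$: since $u(t_2)=h_{x(t_1),u(t_1)}(x(t_2),t_2-t_1)$ with the infimum achieved by $\gamma|_{[t_1,t_2]}$, Theorem \ref{IVP} combined with the Markov property gives $h_{x(t_1),u(t_1)}(\gamma(s),s-t_1)=u(s)$ for $s\in[t_1,t_2]$. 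This yields $u(t_2)-u(t_1)=\int_{t_1}^{t_2}L(\gamma(s),u(s),\dot\gamma(s))\,ds$, and substituting $u(s)=v_-(\gamma(s))$ completes the calibration.

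\textbf{Reverse direction} (calibrated curve of $v_-\in \mathcal{S}_-$ is semi-static). Given $v_-\in \mathcal{S}_-$ and $(v_-,L,0)$-calibrated $\gamma:(-\infty,0]\to M$, set $x(t):=\gamma(t)$ and $u(t):=v_-(\gamma(t))$. I would proceed in two steps. \textbf{Step 1}: establish global minimality, i.e., $u(t_2)=h_{x(t_1),u(t_1)}(x(t_2),t_2-t_1)$ for all $t_1\leq t_2\leq 0$. One inequality is immediate from $v_-=T_{t_2-t_1}^-v_-$, which gives $u(t_2)=v_-(x(t_2))\leq h_{x(t_1),u(t_1)}(x(t_2),t_2-t_1)$. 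For the reverse, the candidate curve $\gamma|_{[t_1,t_2]}$ plugged into the implicit variational principle together with the calibration identity and the Lipschitz estimate Proposition \ref{nonehh} forces equality. \textbf{Step 2}: show $u(t_2)=\inf_{s>0}h_{x(t_1),u(t_1)}(x(t_2),s)$. The bound $u(t_2)\leq h_{x(t_1),u(t_1)}(x(t_2),s)$ for every $s>0$ is exactly $v_-=T_s^- v_-$ evaluated at $x(t_2)$. The opposite bound is obtained by choosing $s=t_2-t_1$ and invoking Step 1.

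\textbf{Main obstacle.} The crux of the argument is Step 1 of the converse. The implicit variational principle for $h_{x_0,u_0}$ evaluates $L$ at the running action value rather than at $v_-\circ\gamma$, so plugging $\gamma$ as a test curve does not immediately recover the action $\int L(\gamma,v_-(\gamma),\dot\gamma)\,ds$ coming from calibration. Bridging this discrepancy requires the uniform Lipschitz dependence of $h_{x_0,u_0}(x,t)$ on $u_0$ from Proposition \ref{nonehh} combined with a Gronwall-type estimate that iteratively controls the gap; this is exactly the mechanism exploited in the parallel arguments of \cite{WWY2}, and I would import the same scheme here.
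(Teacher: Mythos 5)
Your proposal is correct in outline, and the forward direction is essentially the paper's own argument: invoke Lemma \ref{ke} to produce $v_-=w_\infty\in\cS_-$ with $v_-(x(-\tau))=u(-\tau)$, and then use the integral identity $u(t_2)-u(t_1)=\int_{t_1}^{t_2}L(\gamma,u,\dot\gamma)\,ds$ along the minimizing curve to read off calibration. Be aware that this identity is not purely the ``$\leq$'' half coming from plugging $\gamma$ into Theorem \ref{IVP}; the ``$\geq$'' half uses that a minimizing curve actually attains the infimum (equivalently, that $(x,u,p)$ with $p=\frac{\partial L}{\partial \dot x}$ is a characteristic of (\ref{c}), so $\dot u=L$), a fact the paper also imports from \cite{WWY2} without comment, so this is not a gap relative to the paper.

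Where you genuinely diverge is the converse. The paper cites \cite[Proposition 4.1]{WWY2} to assert that the calibrated curve, lifted by the Legendre transform, solves (\ref{c}), and then uses the minimality of $h_{x_0,u_0}$ among such solutions to get $u(t_2)\geq h_{x(t_1),u(t_1)}(x(t_2),t_2-t_1)$, while the reverse inequality and the semi-static identity come from $T_s^-v_-=v_-$. You instead propose to force the hard inequality directly at the level of the action function via a Gronwall comparison, and this does close, provided you make the sign explicit: setting $F(s):=h_{x(t_1),u(t_1)}(\gamma(s),s-t_1)-v_-(\gamma(s))$, the weak KAM property gives $F\geq 0$ on $[t_1,t_2]$ with $F(t_1)=0$, the implicit variational principle with test curve $\gamma$ plus the calibration identity give $F(s)\leq\lambda\int_{t_1}^{s}F(\tau)\,d\tau$, and Gronwall yields $F\equiv 0$; note the Lipschitz input here is really (L3) (the bound $|\frac{\partial L}{\partial u}|\leq\lambda$) rather than Proposition \ref{nonehh}, which concerns dependence on $u_0$. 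Your route is more self-contained in that it avoids invoking the regularity statement that calibrated curves are $C^1$ characteristics, at the cost of rerunning the comparison mechanism that underlies that citation; the paper's route is shorter given the imported machinery. Your Step 2 (taking $s=t_2-t_1$ and combining with $v_-=T_s^-v_-$) is exactly the paper's concluding step.
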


\begin{proof}
Let $(x(\cdot),u(\cdot)):(-\infty,0]\ri M\times\mathbb{R}$ be a negatively semi-static curve. Let $\gm(t):=x(t)$ for each $t\in (-\infty,0]$ with $\gm(0)=x(0)$.  Let
\[
w(x):=\inf_{\tau\geq 0}\inf_{s>0}h_{\gm(-\tau),u(-\tau)}(x,s),\quad v_-(x):=\lim_{t\rightarrow +\infty}T_t^-w(x),\quad \forall x\in M
\]
By Lemma \ref{ke},  $v_-\in \mathcal{S}_-$ and $v_-(\gm(-\tau))=u(-\tau)$ for each $\tau\geq 0$. By the definition of $(x(\cdot),u(\cdot))$, we have
\[u(0)-u(-t)=\int_{-t}^0L(x(\tau),u(\tau),\dot{x}(\tau))d\tau,\]
which implies for each $t>0$,
\[v_-(\gm(0))-v_-(x(-t))=\int_{-t}^0L(\gm(\tau),v_-(\gm(\tau)),\dot{\gm}(\tau))d\tau.\]
It follows that $\gm:(-\infty,0]\ri M$ is ($v_-,L,0$)-calibrated.

Conversely, given $v_-\in \cS_-$, let $\gm: (-\infty,0]\ri M$ be ($v_-,L,0$)-calibrated and let $x(t):=\gm(t)$, $u(t):=v_-(\gm(t))$ for each $t\in (-\infty,0]$. For each $t_1<t_2\leq 0$, we need to prove $(x(\cdot),u(\cdot)):(-\infty,0]\ri M\times\R$ is negatively semi-static. Namely, 	
	\[
	u(t_2)=\inf_{s>0}h_{x(t_1),u(t_1)}(x(t_2),s) \quad u(t_2)=h_{x(t_1),u(t_1)}(x(t_2),t_2-t_1).
	\]
	On one hand, in view of \cite[Proposition 4.1]{WWY2},  $\big(x(t),u(t),p(t)\big)$ satisfies equations (\ref{c}) on $(-\infty,0)$, where $p(t)=\frac{\partial L}{\partial \dot{x}}(x(t),u(t),\dot{x}(t))$. By the minimality of $h_{x_0,u_0}(x,t)$, we have
	\begin{equation}\label{maff}
	u(t_2)\geq h_{x(t_1),u(t_1)}(x(t_2),t_2-t_1)\geq\inf_{s>0}h_{x(t_1),u(t_1)}(x(t_2),s).
	\end{equation}
	On the other hand, since $v_-\in \mathcal{S}_-$, then $T_s^-v_-(x)=v_-(x)$ for all $s>0$ and $x\in M$. Thus, $v_-(y)=\inf_{x\in M}h_{x,v_-(x)}(y,s)$ for all $s>0$, which implies that for each $s>0$ and $x$, $y\in M$, we have
	$v_-(y)\leq h_{x,v_-(x)}(y,s)$. In particular, $v_-(x(t_2))\leq h_{x(t_1),v_-(x(t_1))}(x(t_2),s)$ for each $s>0$ and $t_1$, $t_2<0$. It follows that
	\[
	v_-(x(t_2))\leq \inf_{s>0}h_{x(t_1),v_-(x(t_1))}(x(t_2),s).
	\]
	Since $u(t)=v_-(x(t))$ for all $t\leq 0$, we have
	\begin{align}\label{4-500}
	u(t_2)\leq \inf_{s>0}h_{x(t_1),u(t_1)}(x(t_2),s).
	\end{align}
	By (\ref{maff}) and (\ref{4-500}), we have for each $t_1<t_2\leq 0$,  \[u(t_2)=\inf_{s>0}h_{x(t_1),u(t_1)}(x(t_2),s),\quad u(t_2)=h_{x(t_1),u(t_1)}(x(t_2),t_2-t_1).\]
This completes the proof.
\end{proof}

By a similar argument as Proposition \ref{pr1188}, we have
\begin{proposition}\label{pr119955}
	Let $(x(\cdot),u(\cdot)):\R\ri M\times\mathbb{R}$ be a semi-static curve. Let $\gm(t):=x(t)$ for each $t\in \R$. Then there exist $v_-\in \cS_-$  (resp. $v_+\in \cS_+$) such that $u(t)=v_-(\gm(t))$ (resp. $u(t)=v_+(\gm(t)$)  for each $t\in \R$ and $\gm:\R\ri M$ is ($v_{-},L,0$)-calibrated (resp. ($v_{+},L,0$)-calibrated). Conversely, we suppose that there exist $v_-\in \cS_-$  (resp. $v_+\in \cS_+$), $\gm: \R\ri M$ such that $\gm$ is ($v_{-},L,0$)-calibrated (resp. ($v_{+},L,0$)-calibrated). Let $x(t):=\gm(t)$,  $u(t):=v_-(\gm(t))$ (resp. $u(t):=v_+(\gm(t))$) for each $t\in \R$. Then $(x(\cdot),u(\cdot)):\R\ri M\times\mathbb{R}$ is a semi-static curve.
\end{proposition}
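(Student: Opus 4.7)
The plan is to mirror the argument of Proposition \ref{pr1188}, using the same Busemann-type construction of Lemma \ref{ke} for $v_-$ and a symmetric construction based on the backward action function $h^{x_0,u_0}$ and the forward Lax--Oleinik semigroup $T_t^+$ for $v_+$. The restriction of a semi-static curve $(x(\cdot),u(\cdot)):\mathbb{R}\to M\times\mathbb{R}$ to $(-\infty,0]$ is negatively semi-static, so setting $w(x):=\inf_{\tau\geq 0}\inf_{s>0}h_{x(-\tau),u(-\tau)}(x,s)$ and $v_-(x):=\lim_{t\to+\infty}T_t^-w(x)$ immediately yields $v_-\in\cS_-$ with $v_-(x(-\tau))=u(-\tau)$ for every $\tau\geq 0$.

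The new ingredient beyond Proposition \ref{pr1188} is to upgrade this equality to $v_-(x(t))=u(t)$ for $t>0$. Since $(x,u)$ is semi-static on the whole line, the identity $u(t)=\inf_{s>0}h_{x(-\tau),u(-\tau)}(x(t),s)$ holds for every $\tau\geq 0$ and $t>0$; taking the infimum over $\tau$ gives $w(x(t))=u(t)$. Combined with the monotonicity $T_s^-w\geq w$ established in the proof of Lemma \ref{ke}(ii), this yields $v_-(x(t))\geq u(t)$. The reverse inequality follows from $v_-\in\cS_-$ and the global minimality of $(x,u)$: $v_-(x(t))\leq h_{x(0),v_-(x(0))}(x(t),t)=h_{x(0),u(0)}(x(t),t)=u(t)$. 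Calibration of $\gamma$ with respect to $v_-$ on all of $\mathbb{R}$ is then the immediate consequence of the Lagrangian action identity $u(t_2)-u(t_1)=\int_{t_1}^{t_2}L(\gamma,u,\dot\gamma)\,d\tau$, which holds along any globally minimizing curve. The construction of $v_+\in\cS_+$ with $v_+(x(t))=u(t)$ is dual: take $\tilde w(x):=\sup_{\tau\geq 0}\sup_{s>0}h^{x(\tau),u(\tau)}(x,s)$ and $v_+:=\lim_{t\to+\infty}T_t^+\tilde w(x)$. The Markov inequality $h^{x,h^{x(\tau),u(\tau)}(x,s)}(y,t)\leq h^{x(\tau),u(\tau)}(y,t+s)$ combined with monotonicity of $h^{x,u_0}$ in $u_0$ gives $T_t^+\tilde w\leq\tilde w$, so the limit exists and the rest of the argument runs symmetrically using the positively semi-static property of $(x,u)$ on $[0,+\infty)$.

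Conversely, given $v_-\in\cS_-$ and a $(v_-,L,0)$-calibrated curve $\gamma:\mathbb{R}\to M$, set $x(t):=\gamma(t)$, $u(t):=v_-(\gamma(t))$. Calibration yields $u(t_2)=h_{x(t_1),u(t_1)}(x(t_2),t_2-t_1)$ for $t_1\leq t_2$ (global minimality): the ODE $\dot w=L(\gamma,w,\dot\gamma)$ with $w(t_1)=u(t_1)$ returns $u(t_2)$ at time $t_2$ by the calibration identity, giving the upper bound, while $v_-\in\cS_-$ dominates $L$ and provides the matching lower bound. The same domination gives $u(t_2)\leq h_{x(t_1),u(t_1)}(x(t_2),s)$ for every $s>0$, so $u(t_2)\leq\inf_{s>0}h_{x(t_1),u(t_1)}(x(t_2),s)$; the reverse inequality follows by choosing $s=t_2-t_1$. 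Thus $(x,u)$ is semi-static on $\mathbb{R}$, and the $v_+$ case follows by the symmetric argument. The main (and essentially only) obstacle is the upgrade in the second paragraph: extending $v_-\equiv u$ from the negative ray to the positive ray requires jointly invoking the positively semi-static condition (to pin $w$ on $\{x(t):t>0\}$) and the backward weak KAM property of $v_-$ (to sandwich $v_-(x(t))$ from above).
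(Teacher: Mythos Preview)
Your proposal is correct and follows the route the paper indicates: the paper does not write out a proof of this proposition but simply says it holds ``by a similar argument as Proposition~\ref{pr1188}'', and your argument is exactly the natural adaptation of that proof. The one genuinely new step beyond Proposition~\ref{pr1188}---pushing the identity $v_-(x(t))=u(t)$ from $t\leq 0$ to $t>0$---is handled cleanly: the semi-static condition on the whole line gives $w(x(t))=u(t)$ for $t>0$, the monotonicity $T_s^-w\geq w$ from the proof of Lemma~\ref{ke}(ii) yields $v_-(x(t))\geq u(t)$, and the fixed-point property $v_-=T_t^-v_-$ together with $v_-(x(0))=u(0)$ and global minimality give the reverse inequality. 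Your dual construction of $v_+$ via $\tilde w$ and $T_t^+$ is the expected symmetric version, and the converse direction repeats Proposition~\ref{pr1188} verbatim on the larger time interval.
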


\begin{proposition}\label{bounded}
	Every negatively (resp. positively) semi-static curve $(x(\cdot),u(\cdot))$ is bounded on $(-\infty,0]$ (resp. $[0,+\infty)$) with a bound only depending on $u(0)$. Moreover, Every semi-static curve $(x(\cdot),u(\cdot))$ is bounded on $\mathbb{R}$ with a bound only depending on $u(0)$.
\end{proposition}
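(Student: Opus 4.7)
The plan is to reduce the boundedness of $u(\cdot)$ to the uniform Lipschitz boundedness of (backward/forward) weak KAM solutions on the compact manifold $M$, via the correspondence between semi-static curves and calibrated curves established in Proposition \ref{pr1188} and Proposition \ref{pr119955}. The $x$-component is automatically bounded since $M$ is compact, so the whole issue is to control $|u(t)|$.

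First, for a negatively semi-static curve $(x(\cdot),u(\cdot)):(-\infty,0]\to M\times\R$, I would apply Proposition \ref{pr1188} to produce $v_-\in\cS_-$ satisfying $u(t)=v_-(x(t))$ for every $t\leq 0$; in particular $v_-(x(0))=u(0)$. For a positively semi-static curve, I would establish (or invoke the evident forward analogue of) Proposition \ref{pr1188}: running the Busemann construction of Lemma \ref{ke} with $h^{x(\tau),u(\tau)}$ in place of $h_{x(-\tau),u(-\tau)}$ and using $T_t^+$ in place of $T_t^-$ produces a $v_+\in\cS_+$ with $v_+(x(t))=u(t)$ for every $t\geq 0$. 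For a semi-static curve, Proposition \ref{pr119955} already supplies either choice.

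Next, I would invoke that all elements of $\cS_-$ (resp.\ of $\cS_+$) share a common Lipschitz constant $\kappa$ on $M$, depending only on the structural data of $H$. This is standard and in fact already implicit in the equi-Lipschitz argument carried out in Step 3 of the proof of Lemma \ref{ke}(i): the Lipschitz constant of $T_1^-$ (as a map $(y,u)\mapsto h_{y,u}(\cdot,1)$) restricted to a uniformly bounded range of $u$ bounds the Lipschitz constant of any backward weak KAM solution, since $v_-=T_1^-v_-$. Combined with the fixed point identity and with the fact that $v_-$ is in turn uniformly bounded on $M$, this yields a single $\kappa=\kappa(H,M)$ working for all $v_-\in\cS_-$; the analogous statement holds for $\cS_+$.

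Given $\kappa$ and writing $D:=\operatorname{diam}(M)$, the conclusion is immediate: for $t\leq 0$,
\[
|u(t)|=|v_-(x(t))|\leq |v_-(x(0))|+\kappa\, d(x(t),x(0))\leq |u(0)|+\kappa D,
\]
with an entirely parallel estimate in the forward case using $v_+$, and both estimates combined in the bi-infinite case. The main potential obstacle is the justification that $\kappa$ can be taken independent of the particular solution $v_\pm$; once that uniform Lipschitz bound is in hand, the proposition follows in one line.
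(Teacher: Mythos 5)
Your reduction hinges on the claim that all elements of $\cS_-$ (and $\cS_+$) admit a common Lipschitz constant $\kappa=\kappa(H,M)$, and that is precisely where the argument breaks under (H3). Your justification quietly assumes what fails: the Lipschitz constant extracted from the fixed-point identity $v_-=T_1^-v_-$ is controlled only on a region $M\times[-K,K]\times M$ with $K\geq\|v_-\|_\infty$, so to make it solution-independent you need $\sup_{v_-\in\cS_-}\|v_-\|_\infty<\infty$. The paper's own example (E2) (Proposition \ref{exammmmpp}) rules this out: for $H(x,u,p)=\frac12|p|^2+f(x)u$ there are backward weak KAM solutions $v_i$ with $v_i(0)=u_i$ for every $u_i<0$, so $\cS_-$ is not uniformly bounded; moreover $\frac12|Dv_i|^2=-f(x)v_i$ forces $|Dv_i|$ to blow up as $u_i\to-\infty$ at points where $f>0$ and $|v_i|$ is still of order $|u_i|$, so $\cS_-$ is not equi-Lipschitz either. (In the strictly increasing case the solution is unique and your claim is harmless, but the proposition is needed exactly in the non-decreasing setting.) A secondary gap: the positively semi-static case rests on a forward analogue of Proposition \ref{pr1188} that the paper never states; its proof requires convergence of $T_t^+$ along the lines indicated in Remark \ref{lipcondi} and is not automatic from Proposition \ref{con}.

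The argument can be repaired, but the repair collapses into the paper's own proof. What you actually need is not a bound uniform over $\cS_-$ but a bound on the particular solution attached to the given curve: Lemma \ref{ke} produces $v_-$ with $v_-(x(-\tau))=u(-\tau)$, and its proof (Items (i)--(ii)) bounds $w$ and $T_t^-w$ between $h^{x(0),u(0)}(\cdot,1)$ and $h_{x(0),u(0)}(\cdot,1)$, hence $\|v_-\|_\infty$ is controlled in terms of $u(0)$ alone after taking a supremum over the compact $M$ in the base-point slot; then $|u(t)|\leq\|v_-\|_\infty$ finishes the backward case. But those sandwich estimates are exactly the paper's direct two-line argument: from $u(0)=h_{x(-t),u(-t)}(x(0),t)$, the Markov property and Proposition \ref{relation} give $h^{x(0),u(0)}(x(-t),1)\leq u(-t)\leq h_{x(0),u(0)}(x(-t),1)$, with no weak KAM machinery at all. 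So even once fixed, your route is a detour through Lemma \ref{ke} that ultimately re-derives the same inequalities, while the paper's proof works directly from the definition of a (negatively) semi-static curve.
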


\begin{proof}We only need to show every negatively  semi-static curve $(x(\cdot),u(\cdot))$ is bounded on $(-\infty,0]$  with a bound only depending on $u(0)$.
By the compactness of $M$, it suffices to prove $u(t)$ is bounded on $(-\infty,0]$.
	For $t>0$, by definition, we get $u(0)=h_{x(-t),u(-t)}(x(0),t)$. By the Markov property,
\[u(-t)=\inf_{s>0}h_{x(-t),u(-t)}(x(-t),s)\leq h_{x(-t),u(-t)}(x(-t),t+1)\leq h_{x(0),u(0)}(x(-t),1).\]
	Thus, $u(t)$ is bounded from above on $(-\infty,0]$.
	
On the other hand, by definition, we have \[u(0)\leq h_{x(-t),u(-t)}(x(0),1),\]
which implies $u(-t)\geq h^{x(0),u(0)}(x(-t),1)$.	Thus, $u(t)$ is bounded from below on $(-\infty,0]$.\end{proof}

\begin{proposition}\label{ooo}
	Given $(x_0,u_0)\in M\times \R$. Let $(x(\cdot),u(\cdot)):\R\ri M\times\mathbb{R}$ be a  semi-static curve with $x(0)=x_0$ and $u(0)=u_0$. Let \[p(t):=\frac{\partial L}{\partial \dot{x}}(x(t),u(t),\dot{x}(t))\] for each $t\in \R$. Then $(x(t),u(t),p(t))$ is bounded for  $t\in \R$ and the bound only depends on $x_0,u_0$.
\end{proposition}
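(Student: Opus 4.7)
\textbf{Proof plan for Proposition \ref{ooo}.} The boundedness of $x(t)\in M$ is immediate from the compactness of $M$, while the boundedness of $u(t)$ on $\R$ with a bound depending only on $u_0=u(0)$ is exactly the content of Proposition \ref{bounded}. Thus there exists $R>0$, depending only on $u_0$, such that $(x(t),u(t))\in M\times[-R,R]$ for every $t\in\R$. Consequently, all that remains is to bound $|p(t)|$ uniformly in $t$ by a constant depending only on $(x_0,u_0)$.

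The key ingredient is that the Hamiltonian vanishes identically along any semi-static orbit. Indeed, by Proposition \ref{pr119955}, $x:\R\to M$ is $(v_-,L,0)$-calibrated for some $v_-\in\cS_-$, and hence $(x(t),u(t),p(t))\in G_{v_-}$ for all $t\in\R$. Exactly as in the proof of Theorem \ref{88996}(1), invoking \cite[Proposition 4.1]{WWY2} then yields
\[
H(x(t),u(t),p(t))=0, \qquad \forall\,t\in\R.
\]

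To conclude, I would appeal to the uniform superlinearity of $H(x,u,\cdot)$ in $p$ on the compact set $M\times[-R,R]$: under (H1) and (H2) together with $H\in C^3$, there exists $\theta:[0,+\infty)\to\R$ with $\theta(r)/r\to+\infty$ as $r\to+\infty$ such that $H(x,u,p)\geq \theta(|p|)$ for all $(x,u)\in M\times[-R,R]$ and all $p\in T_x^\ast M$. Combined with $H(x(t),u(t),p(t))=0$, this forces $\theta(|p(t)|)\leq 0$, hence $|p(t)|\leq C$ for some constant $C$ depending only on $R$, and therefore only on $u_0$.

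The only mildly delicate point is the uniform superlinearity claim above; it is standard for Tonelli Hamiltonians on closed manifolds, but if one wishes to avoid it one can instead first bound $|\dot{x}(t)|$ by a purely variational argument, using the identity $u(t+1)=h_{x(t),u(t)}(x(t+1),1)$ (from global minimality) together with a unit-time geodesic joining $x(t)$ to $x(t+1)$ as a competitor to control the action and then superlinearity of $L$ on $M\times[-R,R]\times TM$ to convert this into an $L^1$-bound, upgraded to a pointwise bound via the energy formula $p\cdot\dot{x}-L=H=0$. The bound on $p(t)$ then follows from the $C^2$ Legendre map $\dot{x}\mapsto p=\tfrac{\partial L}{\partial \dot{x}}(x,u,\dot{x})$ on the compact set $(x,u,\dot{x})\in M\times[-R,R]\times\{|\dot{x}|\le C'\}$.
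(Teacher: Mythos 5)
Your proposal is correct and follows essentially the same route as the paper: boundedness of $(x(t),u(t))$ from Proposition \ref{bounded}, then $H(x(t),u(t),p(t))=0$ along the orbit via Proposition \ref{pr119955} together with the results of \cite{WWY2}, and finally superlinearity of $H$ in $p$ (uniform on the compact set $M\times[-R,R]$, which follows from Legendre duality and continuity of $L$) to bound $p(t)$. The alternative variational detour you sketch is unnecessary.
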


\begin{proof}
By Proposition \ref{bounded}, $(x(t),u(t))$ is bounded for $t\in \R$ with a bound only depending on $x_0,u_0$. By Proposition \ref{pr119955} and \cite[Theorem 1.1]{WWY2}, we have
\[H(x(t),u(t),p(t))=0,\]
which together with the assumptions (H2) and (H3) implies $p(t)$ is bounded for  $t\in \R$.
\end{proof}

\begin{lemma}\label{uueequi}
Let $(x(\cdot),u(\cdot)):\R\ri M\times\mathbb{R}$ be a semi-static curve. Then for each $\delta>0$,
\begin{itemize}
\item \text{Uniform Boundedness}:   there exists a constant $K>0$ independent of $t$ such that for $t>\delta$ and each $x\in M$, $s\in \R$, $|h^{x(s),u(s)}(x,t)|\leq K$;
\item \text{Equi-Lipschitz Continuity}:  there exists a constant $\kappa>0$ independent of $t$ such that for $t> 2\delta$ and $s\in \R$, $x\mapsto h^{x(s),u(s)}(x,t)$ are $\kappa$-Lipschitz continuous on $M$.
\end{itemize}
\end{lemma}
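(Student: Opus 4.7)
The plan is to reduce the uniform estimates for the backward action function $h^{x(s),u(s)}(x,t)$ along the orbit to the already-known uniform estimates for action functions with initial data in a fixed compact set, exploiting Proposition \ref{bounded} to confine the orbit to a compact region of $M\times\R$ depending only on the curve.

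First, I would apply Proposition \ref{bounded}: since $(x(\cdot),u(\cdot)):\R\to M\times\R$ is semi-static, there exists $U=U(u(0))>0$ such that $|u(s)|\leq U$ for every $s\in\R$, so that $\{(x(s),u(s)):s\in\R\}\subseteq M\times[-U,U]$, which is compact. This reduces everything to controlling $h^{x_0,u_0}(x,t)$ uniformly in $(x_0,u_0)$ ranging over this compact set.

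For the uniform boundedness, I would invoke Proposition \ref{hupin} to translate the backward action into a forward action associated with the dual Hamiltonian $\bar{H}(x,u,p):=H(x,-u,-p)$, namely $h^{x_0,u_0}(x,t)=-\bar{h}_{x_0,-u_0}(x,t)$. Since $\bar H$ still satisfies the Lipschitz bound $|\partial\bar H/\partial u|\leq\lambda$ used in Remark \ref{lipcondi}, Theorem B.1 of \cite{WWY2} (the uniform boundedness of the forward action on $M\times[-U,U]\times M\times[\delta,+\infty)$) applied to $\bar H$ produces a constant $K=K(U,\delta)>0$ with $|\bar{h}_{x_0,u_0}(x,t)|\leq K$ on that set. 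Translating back by Proposition \ref{hupin} gives $|h^{x(s),u(s)}(x,t)|\leq K$ for all $t>\delta$, $s\in\R$, $x\in M$.

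For the equi-Lipschitz continuity I would mimic the argument used in the proof of Proposition \ref{aclipp}. The Markov property for the backward action (equivalently, for the forward Lax--Oleinik semigroup $T_t^+$ applied via the duality of Proposition \ref{relation}) gives, for $t>2\delta$,
\[
h^{x(s),u(s)}(x,t)=\sup_{z\in M}h^{z,\,h^{x(s),u(s)}(z,\,t-\delta)}(x,\delta).
\]
By Step 2, the ``intermediate values'' $h^{x(s),u(s)}(z,t-\delta)$ lie in $[-K,K]$ uniformly in $z\in M$, $s\in\R$, $t>2\delta$. Since the dual of the Lipschitz fact used in Proposition \ref{aclipp} shows that $(z,v,x)\mapsto h^{z,v}(x,\delta)$ is uniformly Lipschitz on the compact set $M\times[-K,K]\times M$ with some constant $\kappa$, and the sup over $z$ of a family of $\kappa$-Lipschitz functions in $x$ is again $\kappa$-Lipschitz, the desired bound $|h^{x(s),u(s)}(x,t)-h^{x(s),u(s)}(y,t)|\leq\kappa\,d(x,y)$ follows.

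The main obstacle is notational rather than conceptual: one must verify the Markov property for the backward action function in precisely the form used above, and confirm the uniform Lipschitz estimate on the compact slab $M\times[-K,K]\times M$ at fixed time $\delta$ for the backward action. Both are routine consequences of Proposition \ref{hupin} together with the corresponding properties of the forward action established in \cite{WWY2}, but some sign-tracking is needed when transferring between $H$ and $\bar H$.
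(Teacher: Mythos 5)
Your equi-Lipschitz step is essentially the paper's own argument (backward Markov property with time step $\delta$, plus the uniform Lipschitz continuity of $h^{\cdot,\cdot}(\cdot,\delta)$ on $M\times[-K,K]\times M$, and the fact that a supremum of $\kappa$-Lipschitz functions is $\kappa$-Lipschitz), but it presupposes the constant $K$, and your boundedness step has a genuine gap. The reduction ``it suffices to bound $h^{x_0,u_0}(x,t)$ uniformly for $(x_0,u_0)$ in the compact set $M\times[-U,U]$ and $t\geq\delta$'' discards exactly the hypothesis that makes the statement true: such a uniform bound is false in general. Concretely, take $H(x,u,p)=\lambda u+\frac{1}{2}|p|^2$ on $T^*\mathbb{T}\times\R$, which satisfies (H1)--(H3) and (A). By the discounted formula in Appendix \ref{gene}, $h^{x_0,u_0}(x_0,t)=e^{\lambda t}u_0$ (the constant curve realizes the infimum since $l\geq 0$), which blows up as $t\to+\infty$ whenever $u_0\neq 0$; boundedness of the backward action for large times holds only at suitably calibrated base points (here $u_0=0$), which is precisely what the semi-static hypothesis supplies. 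For the same reason the appeal to \cite[Theorem B.1]{WWY2} for the dual Hamiltonian $\bar{H}(x,u,p)=H(x,-u,-p)$ cannot work: $\bar{H}$ satisfies $-\lambda\leq\partial\bar{H}/\partial u\leq 0$, i.e.\ it sits on the dissipative side for the forward action, and indeed $\bar{h}_{x_0,-u_0}(x,t)=-h^{x_0,u_0}(x,t)$ is unbounded in the example above, so the conclusion you want from that theorem is simply false for $\bar{H}$ (Remark \ref{lipcondi} concerns Theorem \ref{88996}, not this estimate, and admissibility of $\bar{H}$ would also need an argument, but these are secondary to the failure of the bound itself).

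What is missing, and what the paper does, is to use the semi-static property directly. Since $u(s)=h_{x(s-\tau),u(s-\tau)}(x(s),\tau)$ for every $\tau>0$, Proposition \ref{relation} gives $h^{x(s),u(s)}(x(s-\tau),\tau)=u(s-\tau)$, and the Markov property then sandwiches $h^{x(s),u(s)}(x,t)$ between two action functions at fixed short times with base points on the orbit: from above, $u(s-t-1)=h^{x(s),u(s)}(x(s-t-1),t+1)\geq h^{x,\,h^{x(s),u(s)}(x,t)}(x(s-t-1),1)$, which by Proposition \ref{relation} and monotonicity yields $h^{x(s),u(s)}(x,t)\leq h_{x(s-t-1),u(s-t-1)}(x,1)$; from below, for $t\geq\delta$, $h^{x(s),u(s)}(x,t)=\sup_{y\in M}h^{y,\,h^{x(s),u(s)}(y,t-\delta/2)}(x,\delta/2)\geq h^{x(s-t+\delta/2),\,u(s-t+\delta/2)}(x,\delta/2)$. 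Both right-hand sides are uniformly bounded because, by Proposition \ref{bounded}, the base points $(x(\cdot),u(\cdot))$ range over a compact subset of $M\times\R$ and the action functions at the fixed times $1$ and $\delta/2$ are locally Lipschitz there. Once $K$ is produced this way, your Lipschitz paragraph goes through verbatim.
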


\begin{proof}
 Since   $(x(\cdot),u(\cdot)):\R\ri M\times\mathbb{R}$ is a semi-static curve, for any $(x,t)\in M\times (0,+\infty)$ and $s\in \R$, by Markov property,
\begin{align*}
u(s-t-1)&=h^{x(s),u(s)}(x(s-t-1),t+1),\\
&\geq h^{x,h^{x(s),u(s)}(x,t)}(x(s-t-1),1),
\end{align*}
which implies
\[h^{x(s),u(s)}(x,t)\leq h_{x(s-t-1),u(s-t-1)}(x,1).\]
By Proposition \ref{bounded}, $(x(\cdot),u(\cdot)):\R\ri M\times\mathbb{R}$ is bounded.
So, $h^{x(s),u(s)}(\cdot,\cdot)$ is bounded from above on $M\times (0,+\infty)$ for each $s\in \R$.
On the other hand, by Markov property, for any $t\geq \delta$, we have
\begin{align*}
h^{x(s),u(s)}(x,t)&=\sup_{y\in M}h^{y,h^{x(s),u(s)}(y,t-\frac{\delta}{2})}(x,\frac{\delta}{2})\\
&\geq h^{x(s-t+\frac{\delta}{2}),h^{x(s),u(s)}(x(s-t+\frac{\delta}{2}),t-\frac{\delta}{2})}(x,\frac{\delta}{2})\\
&=h^{x(s-t+\frac{\delta}{2}),u(s-t+\frac{\delta}{2})}(x,\frac{\delta}{2}).
\end{align*}
By Proposition \ref{bounded}, $u\left(s-t+\frac{\delta}{2}\right)$ is bounded. Since $h^{\cdot,\cdot}(\cdot,\frac{\delta}{2})$ is locally Lipschitz on $M\times \R\times M$, then $h^{x(s),u(s)}(\cdot,\cdot)$ is bounded from below on $M\times [\delta,+\infty)$. Thus, there exists a constant $K>0$ independent of $t$ such that for $t>\delta$ and each $x\in M$, $s\in \R$, \[|h^{x(s),u(s)}(x,t)|\leq K.\]
Note that for any $t> 2\delta$, we have
\begin{align*}
&\left|h^{x(s),u(s)}(x,t)-h^{x(s),u(s)}(y,t)\right|\\
=&\left|\sup_{z\in M}h^{z,h^{x(s),u(s)}(z,t-\delta)}(x,\delta)-\sup_{z\in M}h^{z,h^{x(s),u(s)}(z,t-\delta)}(y,\delta)\right|\\
\leq &\sup_{z\in M}\left|h^{z,h^{x(s),u(s)}(z,t-\delta)}(x,\delta)-h^{z,h^{x(s),u(s)}(z,t-\delta)}(y,\delta)\right|.
\end{align*}
Since $h^{\cdot,\cdot}(\cdot,{\delta})$ is uniformly Lipschitz on $M\times [-K,K]\times M$ with some Lipschitz constant $\kappa$, then
\[
\left|h^{x(s),u(s)}(x,t)-h^{x(s),u(s)}(y,t)\right|\leq \kappa\ d(x,y), \quad \forall t> 2\delta.
\]
This completes the proof.
\end{proof}

%

\begin{remark}\label{coclass}
In classical Hamiltonian systems, the Ma\~{n}\'{e} potential (see \cite{CI} for instance) is defined by
\[\Phi(x_0,x):=\inf_{t>0}h^t(x_0,x)=\inf_{t>0}\inf_{\gm}A_L(\gm)=\inf_{t>0}\inf_{\gm}\int_0^tL(\gm(\tau),\dot{\gm}(\tau))d\tau,\]
where the infimums are taken among the absolutely continuous curves $\gamma:[0,t]\rightarrow M$ with $\gm(0)=x_0$ and $\gm(t)=x$. $h^t(x_0,x):=\inf_{\gm}A_L(\gm)$ is called Mather's action function.
Moreover, an  absolutely continuous curve $\gm:\R\ri M$  is called semi-static if for each $t_1\leq t_2$
\begin{equation}\label{clsemi}
A_L(\gm|_{[t_1,t_2]})=\Phi(\gm(t_1),\gm(t_2)).
\end{equation}
An  absolutely continuous curve $\gm:\R\ri M$ is called static if it is semi-static and for each $t_1\leq t_2$,
\begin{equation}\label{clsta}
\Phi(\gm(t_1),\gm(t_2))+\Phi(\gm(t_1),\gm(t_2))=0.
\end{equation}

In our considerations, if $H(x,u,p)$ is independent of $u$, then
\[h_{x_0,u_0}(x,t)=u_0+h^t(x_0,x),\quad h^{x_0,u_0}(x,t)=u_0-h^t(x,x_0).\]
By Definition \ref{semdepp}, we have
\[u(t_2)=u(t_1)+\Phi(x(t_1),x(t_2)),\]
which together with $\dot{u}(t)=L(x(t),u(t),\dot{x}(t))$ implies {\rm(\ref{clsemi})} holds. By Definition \ref{stcont}, we have
\[u(t_2)=u(t_1)+\Phi(x(t_1),x(t_2)),\quad u(t_1)=u(t_2)+\Phi(x(t_2),x(t_1)),\]
which implies {\rm(\ref{clsta})} holds.

Therefore, the Aubry-Mather theory developed in present paper is compatible with the classical case.
\end{remark}

\begin{remark}\label{comtopr}
If  $0<\frac{\partial H}{\partial u}\leq \lambda$, we have $\tilde{\mathcal{A}}=\tilde{\mathcal{N}}$.
In fact, by definition, a curve defined by {\rm(\ref{3-399})} is semi-static in Definition \ref{semdepp}. On the other hand, we verify the inverse implication is also true.
Let $u_-$ be the unique viscosity solution of {\rm (\ref{hj})}. Let $(x(\cdot),u(\cdot)):\mathbb{R}\to M\times\mathbb{R}$ is semi-static. By Lemma \ref{ke}, $u(t)=u_-(x(t))$ for all $t\in \R$. By \cite[Lemma 4.8]{WWY2}, $(x(\cdot),u(\cdot)):\mathbb{R}\to M\times\mathbb{R}$ satisfies {\rm (\ref{3-399})}.
\end{remark}

\section{Proof of Lemma \ref{kkkee}}\label{prkkkee}
	The proof  is divided into three steps.\\[2mm]
	{\bf Step 1}: We  prove that for each $t\in \mathbb{R}$,
	\[
	v(t)-v_0=u(t)-u_0.
	\]
	Note that $v_0\geq u_0$. It follows from Proposition \ref{nonehh} that for each $s>0$, we have
	\[
	h_{x_0,v_0}(x(t),s)-h_{x_0,u_0}(x(t),s)\leq v_0-u_0,
	\]
	it gives rise to
	\[
	v(t):=\inf_{s>0}h_{x_0,v_0}(x(t),s)\leq \inf_{s>0}h_{x_0,u_0}(x(t),s)+v_0-u_0=u(t)+v_0-u_0.
	\]
We need to show that for each $t\in \R$, the inequality above is an equality.
	By contradiction, we assume that there exist $t_0\in \mathbb{R}$, $\sigma_0,\delta>0$ such that
	\begin{equation}\label{mnnn44}
	h_{x_0,v_0}(x(t_0),\sigma_0)=u(t_0)+v_0-u_0-\delta.
	\end{equation}
	Denote $\bar{v}(t_0):=h_{x_0,v_0}(x(t_0),\sigma_0)$. Since $v_0\geq u_0$,
	\[
	\bar{v}(t_0)\geq h_{x_0,u_0}(x(t_0),\sigma_0)\geq \inf_{\tau>0}h_{x_0,u_0}(x(t_0),\tau)=u(t_0).
	\]
	Since $(x(\cdot),u(\cdot)):\R\ri M\times\R$ is static for each $t\in \R$, then $\inf_{\tau>0}h_{x(t_0),u(t_0)}(x_0,\tau)=u_0$. Hence, for each $\varepsilon>0$, one can find $\tau_0>0$ such that
	\begin{equation}\label{mnnnn}
	h_{x(t_0),u(t_0)}(x_0,\tau_0)\leq u_0+\varepsilon.
	\end{equation}
	Note that $\bar{v}(t_0)\geq u(t_0)$,
	it follows from Proposition \ref{nonehh} that
	\begin{equation}\label{mnnnn55}
	h_{x(t_0),\bar{v}(t_0)}(x_0,\tau_0)-h_{x(t_0),u(t_0)}(x_0,\tau_0)\leq \bar{v}(t_0)-u(t_0).
	\end{equation}
	Combining (\ref{mnnn44}), (\ref{mnnnn}) and (\ref{mnnnn55}), we have
	\begin{align*}
	h_{x(t_0),\bar{v}(t_0)}(x_0,\tau_0)&\leq h_{x(t_0),u(t_0)}(x_0,\tau_0)+\bar{v}(t_0)-u(t_0)\\
	&\leq (u_0+\varepsilon)+(u(t_0)+v_0-u_0-\delta)-u(t_0)\\
	&=v_0+\varepsilon-\delta.
	\end{align*}
	By Markov property,  we have
	\[h_{x_0,v_0}(x_0,\tau_0+\sigma_0)\leq h_{x(t_0),h_{x_0,v_0}(x(t_0),\sigma_0)}(x_0,\tau_0)=h_{x(t_0),\bar{v}(t_0)}(x_0,\tau_0)\leq v_0+\varepsilon-\delta.\]
	Since $\varepsilon>0$ is arbitrary, taking $0<\varepsilon\leq \frac{\delta}{2}$, we have
	\begin{equation}\label{conrrr}
	h_{x_0,v_0}(x_0,\tau_0+\sigma_0)\leq v_0-\frac{\delta}{2}.
	\end{equation}
	
	On the other hand,  we assert
	\[
	v_0\leq h_{x_0,v_0}(x_0,\tau_0+\sigma_0),
	\]
then   $v_0\leq v_0-\frac{\delta}{2}$, which is a contradiction.

It remains to prove the assertion.
	By contradiction, we assume $v_0> h_{x_0,v_0}(x_0,\tau_0+\sigma_0)$. It follows from Markov property that
	\begin{align*}
	h_{x_0,v_0}(x_0,2(\tau_0+\sigma_0))&\leq h_{x_0,h_{x_0,v_0}(x_0,\tau_0+\sigma_0)}(x_0,\tau_0+\sigma_0)\\
&<h_{x_0,v_0}(x_0,\tau_0+\sigma_0)<v_0.
\end{align*}
	Repeating the procedure for $n$ times, it yields that \[h_{x_0,v_0}(x_0,+\infty)<h_{x_0,v_0}(x_0,\tau_0+\sigma_0)<v_0,\] which contradicts $h_{x_0,v_0}(x_0,+\infty)=v_0$.
	\\[2mm]
	{\bf Step 2}: Let $v_s(t)=\inf_{\tau>0}h_{x(s),v(s)}(x(t),\tau)$. We  prove $v(t)\geq v_s(t)$ for each $s$, $t\in \mathbb{R}$. For each $\tau>0$, $s\in \mathbb{R}$, we have
	\[
	h_{x(s),v(s)}(x_0,\tau)-h_{x(s),u(s)}(x_0,\tau)\leq v(s)-u(s).
	\]
	By Step 1, we have  $v(s)-u(s)=v_0-u_0\geq 0$. Then
	\[h_{x(s),v(s)}(x_0,\tau)-v_0\leq  h_{x(s),u(s)}(x_0,\tau)-u_0,
	\]
which implies $h_{x(s),v(s)}(x_0,+\infty)\leq v_0$ for each $t\in \R$.
	By Proposition \ref{lem3.1}, \[h_{x(s),u(s)}(x_0,+\infty)=u_0,\] it yields that for each $\varepsilon>0$, there exists $\tau_1>0$ such that
	$h_{x(s),v(s)}(x_0,\tau_1)\leq v_0+\varepsilon$. By Markov property and Monotonicity property, we have
	\begin{align*}
	h_{x(s),v(s)}(x(t),\tau_1+\tau)&\leq h_{x_0,h_{x(s),v(s)}(x_0,\tau_1)}(x(t),\tau)\\
&\leq h_{x_0,v_0+\varepsilon}(x(t),\tau)\\
&\leq h_{x_0,v_0}(x(t),\tau)+\varepsilon.
	\end{align*}
	It follows that
	\[
	v_s(t)\leq \inf_{\tau>0}h_{x(s),v(s)}(x(t),\tau_1+\tau)\leq \inf_{\tau>0}h_{x_0,v_0}(x(t),\tau)+\varepsilon=v(t)+\varepsilon.
	\]
	Since $\varepsilon$ is arbitrary, we have
	$v(t)\geq v_s(t)$ for each $s,t\in \mathbb{R}$.

	On the other hand, we  prove that $v(t)\leq v_s(t)$ for each $s$, $t\in \mathbb{R}$. By the definition of $v(t)$, for each $\varepsilon>0$, there exists $\tau_2>0$ such that
	\[
	h_{x_0,v_0}(x(s),\tau_2)\leq v(s)+\eps.
	\]
	By Markov property and Monotonicity, we get
	\[
	h_{x_0,v_0}(x(t),\tau_2+\tau)\leq h_{x(s),h_{x_0,v_0}(x(s),\tau_2)}(x(t),\tau)\leq h_{x(s),v(s)}(x(t),\tau)+\varepsilon.
	\]
	It follows that
	\[
	v(t)\leq \inf_{\tau>0}h_{x_0,v_0}(x(t),\tau_1+\tau)\leq \inf_{\tau>0}h_{x(s),v(s)}(x(t),\tau)+\varepsilon=v_s(t)+\varepsilon.
	\]
	Since $\varepsilon$ is arbitrary, we have
	$v(t)\leq v_s(t)$ for each $s,t\in \mathbb{R}$. Therefore, we obtain that for each $s,t\in \mathbb{R} $,
	
	\begin{equation}\label{vxxx}
	v(t)=\inf_{\tau>0}h_{x(s),v(s)}(x(t),\tau).
	\end{equation}
	{\bf Step 3}: It suffices to show $(x(t),v(t))$ is a globally minimizing curve. For any $t_1$, $t_2\in\mathbb{R}$, by (\ref{vxxx}) we have
	\[
	v(t_2)=\inf_{\tau>0}h_{x(t_1),v(t_1)}(x(t_2),\tau).
	\]
	Note that $v(t)-u(t)=v_0-u_0\geq 0$ for all $t\in \mathbb{R}$. It follows from Proposition \ref{nonehh} that
\[h_{x(t_1),v(t_1)}(x(t_2),\tau)-h_{x(t_1),u(t_1)}(x(t_2),\tau)\leq v(t_1)-u(t_1),\]
which means that
\[\inf_{\tau>0}h_{x(t_1),v(t_1)}(x(t_2),\tau)-v(t_1)\leq \inf_{\tau>0}h_{x(t_1),u(t_1)}(x(t_2),\tau)-u(t_1).\]

 Since $(x(\cdot),u(\cdot)):\R\ri M\times\R$ is static, then we get
	\begin{equation}\label{v2-v1}
\begin{split}
	v(t_2)-v(t_1)&=\inf_{\tau>0}h_{x(t_1),v(t_1)}(x(t_2),\tau)-v(t_1)\\
&\leq \inf_{\tau>0}h_{x(t_1),u(t_1)}(x(t_2),\tau)-u(t_1)\\
&=u(t_2)-u(t_1).
\end{split}
	\end{equation}
	By exchanging the roles of $t_1$ and $t_2$, we have
	\begin{equation}\label{u2-u1}
	v(t_2)-v(t_1)=u(t_2)-u(t_1).
	\end{equation}
	If $t_1<t_2$, it follows from (\ref{v2-v1}) that
	\begin{align*}
	v(t_2)-v(t_1)&\leq h_{x(t_1),v(t_1)}(x(t_2),t_2-t_1)-v(t_1)\\
&\leq h_{x(t_1),u(t_1)}(x(t_2),t_2-t_1)-u(t_1)\\
&=u(t_2)-u(t_1),
	\end{align*}
	which combining with (\ref{u2-u1}) implies $v(t_2)=h_{x(t_1),v(t_1)}(x(t_2),t_2-t_1)$.

This completes the proof.

\medskip


\begin{thebibliography}{99}\small
\renewcommand{\baselinestretch}{0.0}
\setlength\itemsep{-2pt}
	
	
	
	\bibitem{Arn} V. Arnold,  Mathematical methods of classical mechanics. Translated from the Russian by K. Vogtmann and A. Weinstein. Second edition. Graduate Texts in Mathematics, {60}. Springer-Verlag, New York, 1989.
	
	
	
	
	
	
	
	
	
	
	
	
	
	
	
	
	
	
	
	
	
	\bibitem{Be} P. Bernard. {  Existence of $C^{1,1}$ critical sub-solutions of the Hamilton-Jacobi equation on compact manifolds}. Annales Scientifiques de l'\'{E}cole Normale Sup\'{e}rieure, {40}(3):445-452, 2007.


	


\bibitem{Be1} P. Bernard. {  Symplectic aspects of Mather theory}. Duke Math. J. {136} (2007), 401--420.


\bibitem{Be2}  P. Bernard. {  The dynamics of pseudographs in convex Hamiltonian systems}. J. Amer. Math. Soc.  {21} (2008), 615--669.
%



	\bibitem{Bra} A. Bravetti, Contact Hamiltonian dynamics: the concept and its use, Entropy {19} (2017), 535.



\bibitem{B22} A. Bravetti. Contact geometry and thermodynamics. Int. J. Geom. Meth. Mod. Phys., {16}
1940003, 2019.


\bibitem{BCT1} A. Bravetti, H. Cruz and D. Tapias, Contact Hamiltonian mechanics,  Annals Phys. {376} (2017), 17--39.



%
%
%
%

\bibitem{CCWY}  P. Cannarsa, W. Cheng, K. Wang and J. Yan,  Herglotz' generalized variational principle and  contact type Hamilton-Jacobi equations, {  Trends in Control Theory and Partial Differential Equations}, 39--67.
 Springer INdAM Ser., {32}, Springer, Cham, 2019.



\bibitem{CCJWY}
P. Cannarsa, W. Cheng, L. Jin, K. Wang and J. Yan, Herglotz' variational principle and Lax-Oleinik evolution,
J. Math. Pures Appl.,  J. Math. Pures Appl., {141}: 99-136, 2020.

\bibitem{CS}   P. Cannarsa and C. Sinestrari. {  Semiconcave functions, Hamilton-Jacobi equations, and optimal control}. Vol. {58}. Springer, 2004.


\bibitem{C} Q. Chen, Convergence of solutions of Hamilton-Jacobi equations depending nonlinearly on the unknown function,  Adv. Calc. Var., Published online.




\bibitem{CCIZ} 	
Q. Chen, W. Cheng, H. Ishii and K. Zhao,  Vanishing contact structure problem and convergence of the viscosity solutions, Comm. Partial Differential Equations {44} (2019), 801--836.


\bibitem{CDI} G. Contreras, J. Delgado and R. Iturriaga, Lagrangian
flows: the dynamics of globally minimizing orbits. II, Bol. Soc.
Brasil. Math. {28} (1997), 155--196.



\bibitem{CIL}
M. Crandall, H. Ishii and P.-L. Lions, User's guide to viscosity solutions of second order partial differential equations,
Bull. Amer. Math. Soc. (N.S.) {27} (1992),  1--67.




\bibitem{CI} G. Contreras and R. Iturriaga. {  Global minimizers of autonomous Lagrangians}. 22nd  Col\'{o}quio Brasileiro de Matem\'{a}tica. [22nd Brazilian Mathematics Colloquium], Instituto de  Matem\'{a}tica Pura e Aplicada (IMPA), Rio de Janeiro, 1999.

%
%
\bibitem{CL2} M. Crandall and P.-L. Lions. {  Viscosity solutions of Hamilton-Jacobi equations}, Trans. Amer. Math. Soc., 277 (1983), 1-42.



\bibitem{DeleonS} M. de Le\'{o}n, C. Sard\'{o}n. Cosymplectic and contact structures for time-dependent and dissipative Hamiltonian
systems. Journal of Physics A: Mathematical and Theoretical, 50(25), 255205. (2017).


\bibitem{DeleonV} M. de Le\'{o}n, M. L.  Valc\'{a}zar. Infinitesimal symmetries in contact Hamiltonian systems. Journal of Geometry
and Physics, 153, 103651. (2020).




\bibitem{Dav}
A. Davini, A. Fathi, R. Iturriaga and M. Zavidovique, Convergence of the solutions of the discounted Hamilton-Jacobi equation: convergence of the discounted solutions, Invent. Math. {206} (2016), 29-55.


\bibitem{dw}
A. Davini and  L. Wang, {On vanishing discount problems from the negative direction}.  Discrete Contin. Dyn. Syst., 41 (2021), 2377-2389.




%


\bibitem{Fat-b}
A. Fathi, Weak KAM Theorem in Lagrangian Dynamics, preliminary version 10, Lyon,
unpublished (2008).


 \bibitem{FS} A. Fathi and A. Siconolfi. {  Existence of $C^1$ critical subsolutions of the Hamilton-Jacobi equation}.
Invent. Math., {155}:363-388, 2004.

\bibitem{Go}
D. Gomes, Generalized Mather problem and selection principles for viscosity
solutions and Mather measures, Adv. Calc. Var. {1} (2008), 291--307.


\bibitem{Go1} D. Gomes, H. Mitake and H. Tran,  The selection problem for discounted
Hamilton-Jacobi equations: some non-convex cases. J. Math. Soc. Jpn. {70} (2018), 345--364.


%
\bibitem{her} G. Herglotz, {Ber\"{u}hrungstransformationen}, Lectures at the University of G\"{o}ttingen, G\"{o}ttingen
(1930)


\bibitem{It} R. Iturriaga, H. Sanchez-Morgado, Limit of the infinite horizon discounted Hamilton-Jacobi equation, Discrete Contin. Dyn. Syst. Ser. B {15} (2011) 623--635.
%
%

\bibitem{i1} H. Ishii, H. Mitake and H. Tran, The vanishing discount problem and viscosity
Mather measures. Part 1: the problem on a torus, J. Math. Pures Appl. {9} (2017), 125--149.


\bibitem{i2} H. Ishii, H. Mitake and H. Tran,  The vanishing discount problem and viscosity
mather measures. Part 2: Boundary value problems, J. Math. Pures Appl. {9} (2017),
261--305.



\bibitem{JMT} W. Jing, H. Mitake and H. Tran, Generalized ergodic problems: existence and uniqueness structures of solutions, J. Differential Equations, {268}(2020), 2886-2909.


	
%







\bibitem{LPV}
P.-L. Lions, G. Papanicolaou and S. R. S. Varadhan, Homogenization of Hamilton-Jacobi
Equations, preprint.


\bibitem{LTW} Q. Liu, P. Torres, C. Wang, Contact Hamiltonian dynamics: Variational
principles, invariants, completeness and
periodic behavior. Annals of Physics 395 (2018) 26-44.

\bibitem{MS} S. Mar\`{o} and A. Sorrentino, Aubry-Mather theory for conformally symplectic systems, Commun. Math. Phys., {354} (2017), 775--808.

\bibitem{M1} J. Mather, Action minimizing invariant measures for positive definite Lagrangian
systems, Math. Z. {207} (1991), 169--207.


\bibitem{Mvc} J. Mather, Variational construction of connecting orbits, Ann. Inst. Fourier
(Grenoble) {43} (1993), 1349--1386.

%




\bibitem{Mn3} R. Ma\~{n}\'{e}, Lagrangain flows: the dynamics of
globally minimizing orbits, Bol. Soc. Brasil. Math. {28} (1997), 141--153.




%
%
%
\bibitem{MK} H. Mitake and K. Soga, Weak KAM theory for discounted Hamilton-Jacobi
equations and its application, Calc. Var. (2018), 57--78.
%
\bibitem{MT}
H. Mitake and H. Tran, Selection problems for a discount degenerate viscous
Hamilton-Jacobi equation, Adv. Math. {306} (2017), 684--703.








\bibitem{S}
K. Soga, Selection problems of $\mathbb{Z}^2$
-periodic entropy solutions and viscosity solutions, Calc. Var. Partial Differ. Equ. {56} (2017),  Art. 119, 30 pp.


\bibitem{SWY} X. Su, L. Wang and J. Yan, Weak KAM theory for Hamilton-Jacobi equations  depending on unkown functions, Discrete Contin. Dyn. Syst.  {36} (2016), 6487--6522.



\bibitem{VDeleon}  M. L.  Valc\'{a}zar. M. de Le\'{o}n,  Contact Hamiltonian systems. Journal of Mathematical Physics, 60(10),
102902. (2019).




\bibitem{WWY} K. Wang, L. Wang and J. Yan, Implicit variational principle for contact Hamiltonian systems, Nonlinearity {30} (2017), 492--515.


\bibitem{WWY1} K. Wang, L. Wang and J. Yan,  Variational principle for contact Hamiltonian systems and its applications, J. Math. Pures Appl. {123} (2019), 167--200.


\bibitem{WWY2} K. Wang, L. Wang and J. Yan, Aubry-Mather theory for contact Hamiltonian systems, Commun. Math. Phys. {366} (2019), 981--1023.


\bibitem{WWY4} K. Wang, { L. Wang} and J. Yan, {Weak KAM solutions of Hamilton-Jacobi equations  with  decreasing dependence on unknown functions}. J. Differential Equations, 286 (2021), 411-432.
\bibitem{WY}
Y. Wang and J. Yan, A variational principle for contact Hamiltonian systems, J. Differential Equations
{267} (2019), 4047--4088.

\bibitem{WYZ}
Y. Wang, J. Yan and J. Zhang, Convergence of viscosity solutions of generalized contact Hamilton-Jacobi equations, Arch. Rational Mech. Anal., Published online.


\bibitem{ZC}   K. Zhao and W. Cheng, On the vanishing contact structure for viscosity solutions of contact type Hamilton-Jacobi equations I: Cauchy problem, Discrete Contin. Dyn. Syst., {39} (2019), 4345--4358.

\end{thebibliography}
\end{document}